\definecolor{bf}{rgb}{0,0,0.6} 
\definecolor{mygray}{gray}{0.85}
\definecolor{darkWhite}{rgb}{0.94,0.94,0.94}
\newcommand{\esp}[1]{\mathbb{E}\mathopen{}\left[#1\right]}
\newcommand{\reels}{\mathbb{R}}
\newcommand{\entiers}{\mathbb{N}}
\newcommand{\FuncDef}[4]{\ensuremath{\left\{\begin{array}{c} #1\longrightarrow #2\\ #3\longrightarrow #4\\\end{array}\right.}}
\newcommand{\R}{\mathbb{R}}
\title{Noise through an additional variable for mean field games master equation on finite state space}
\author{Charles Bertucci, Charles Meynard}
\numberwithin{equation}{section}
\newtheorem{thm}{Theorem}[section]
\newtheorem{definition}[thm]{Definition}
\newtheorem{prop}[thm]{Proposition}
\newtheorem{lemma}[thm]{Lemma}
\newtheorem{corol}[thm]{Corollary}
\theoremstyle{definition}
\newtheorem{remarque}[thm]{Remark}
\newtheorem{exemple}[thm]{Example}
\newtheorem{hyp}[thm]{Hypothesis}
\newenvironment{acknowledgements} {\begin{abstract}} {\end{abstract}}
\begin{document}
\maketitle
\begin{abstract}
     This paper provides a mathematical study of the well-posedness of master equation on finite state space involving terms modelling common noise. In this setting, the solution of the master equation depends on an additional variable modelling the value of a stochastic process impacting all players.
     Using technique from viscosity solutions, we give sufficient conditions for the existence of a Lipschitz continuous solution on any time interval. Under some structural assumptions, we are even able to treat cases in which the dynamics of this stochastic process depend on the state of the game.
\end{abstract}
\setcounter{tocdepth}{1}
\tableofcontents
\section{Introduction}
This paper is the first of a series devoted to the systematic study of mean field games (MFG) master equations associated to games in which a common source of randomness (or noise) is affecting all players through a finite dimensional parameter. We here treat the case of a master equation set on a subset of $\reels^d$. The case of a master equation in infinite dimensions shall be tackled in a forthcoming work. 

In a deterministic setting, the typical form of a master equation associated to a mean field game in a finite state space is 
\begin{equation}
\label{MFG sans bruit}
\left\{
\begin{array}{c}
-\partial_t V+(F(x,V)\cdot \nabla_x )V=G(x,V) \text{ for } (t,x)\in (0,T)\times\Omega, \\
V(T,x)=U_0(x) \text{ for } x \in \Omega.
\end{array}
\right.
\end{equation}
In this framework, $T>0$ is the horizon of the game, and $\Omega$ a bounded subset of $\reels^d$, $d \geq 1$. We are solving the previous equation for $V:[0,T]\times \Omega\to \reels^d$, given $G,F:\Omega\times \reels^d\to\reels^d$ which describe respectively the evolution of the value and of the mean field quantities, and $U_0:\Omega\to \reels^d$ is the final value. The solution $V$ of the master equation shall be called the value of the game.

The equation \eqref{MFG sans bruit} is naturally associated to the system of forward backward ordinary differential equations set on $[t_0,T]$.

\begin{equation}
\label{caractéristique MFG}
\left\{
\begin{array}{c}
     \frac{dX_t}{dt}=-F(X_t,U_t),  \\
     \frac{dU_t}{dt}=-G(X_t,U_t),\\
     X_{t_0}=x,\\
     U_T=U_0(X_T).\\
\end{array}
\right.
\end{equation} 
Given a solution of $V$ of \eqref{MFG sans bruit} and $(X_t,U_t)_{t \in [t_0,T]}$ of \eqref{caractéristique MFG}, we get that for all $t\in [t_0,T], V(t,X_t) = U_t$. Hence, the existence of a solution (resp. regularity) to \eqref{MFG sans bruit} is linked to uniqueness (resp. stability) properties of \eqref{caractéristique MFG}. At this level, it might seem strange that a forward-backward structure is imposed on \eqref{caractéristique MFG}, instead of a forward-forward or backward-backward one for instance. This is mainly due to the fact that the interpretation of \eqref{caractéristique MFG} is natural in terms of the underlying game.

In this paper, we aim at investigating situations in which the characteristics are in fact stochastic, i.e. when \eqref{caractéristique MFG} is a forward-backward stochastic differential equation (SDE). Namely, we are going to consider that the couplings $G$ and $F$ are going to depend on the value of a stochastic process $(p_t)_{t\geq 0}$ defined by 
\[
\begin{array}{c}
     dp_t=-b(p_t)dt+\sqrt{2\sigma} dW_t,
\end{array}
\]
where $(W_t)_{t \geq 0}$ is a $m$-dimensional Brownian motion on a standard (fixed) probability space, $b : \R^m \to \R^m$ is a data of the problem and $\sigma \geq 0$ is also given. In such a case, we look for a value which also depends on the value of this stochastic process $(p_t)$. Thus the problem now becomes the one of finding a solution $V:[0,T]\times \Omega\times \R^m\to \R^d$ of 
\begin{equation}
\label{MFG backward}
\left\{
\begin{array}{c}
-\partial_t V+(F(x,p,V)\cdot \nabla_x) V+b(p)\cdot \nabla_p V-\sigma \Delta_p V=G(x,p,V) \text{ in } (0,T)\times \Omega \times \R^m, \\
V(T,x,p)=U_0(x,p) \text{ in } \Omega\times \mathbb{\reels}^m.
\end{array}
\right.
\end{equation}
This is what we mean by "noise through an additional variable": the presence of this new variable $p$ in the master equation is a consequence of the dependency of the coefficients on the noise process $(p_t)_{t\geq 0}$. This is a different approach to noise (or randomness) in mean field game from the usual additive common noise, studied for instance in \cite{cardaliaguet2015master}. We believe this modeling of common noise to be natural in a wide variety of applications, see \cite{bertucci2020mean,cardaliaguet:hal-01389128,quentinpetit} for examples.

In what follows we are going to analyze \eqref{MFG backward} in two different contexts. The first one is mainly the one we just presented, in which the dynamics of $b$ are purely exogenous to the underlying game. Taking proper care of the non-linearity $(F(x,U)\cdot \nabla_x) U$, we are able to extend the result existing on \eqref{MFG sans bruit} to the case of \eqref{MFG backward}. 

We then proceed to study cases in which we allow the vector field $b$, which drives the evolution of the stochastic process $(p_t)_{t\geq 0}$ to depend on the state of the mean fields and on the value of the game, i.e. we allow $b$ to depend on $x$ and $V$. To the extent of our knowledge, this is the first attempt at studying MFG in which the dynamics of the noise depend on the MFG. This kind of dependence is extremely natural. Indeed, consider for instance the case of $(p_t)_{t\geq 0}$ modeling environmental variables, we are then modeling the fact that the players can impact on these environmental variables. Consider also the case in which $(p_t)_{t\geq 0}$ stands for certain prices affecting a financial system, or indexes (such as GDP) influencing an economy. We are then also modeling the facts that the players can have an impact on those quantities, which seems particularly natural.

\paragraph{Bibliographical comments}
Mean field games were first introduced by Lasry and Lions in \cite{MFGjpmath,P_L_Lions}. Since then, many works have contributed to the field, both extending it with new modelling concerns and contributing to pushing the mathematical analysis of these models. The master equation approach was introduced in \cite{P_L_Lions} and lead to new mathematical questions. In general, the study of the master equation is done under structural assumptions, often on the monotonicity of the couplings $G$ and $F$ or the fact that we can integrate the master equation into an Hamilton-Jacobi-Bellman (HJB) equation (the so-called potential regime). A first well-posedness theory was obtained in \cite{cardaliaguet2015master} for classical solutions. Since then, several notions of weak solutions have been introduced, we can cite for instance the notion of monotone solution \cite{JEP_2021__8__1099_0,bertucci2023monotone,doi:10.1137/21M1450008} which allows to define a concept of solution of master equation for values which are merely continuous. Note that the type of noise we are interested in was also discussed in \cite{bertucci2023monotone}. Lipschitz solutions have been introduced in \cite{bertucci2023lipschitz} and allow to work with well-defined notion of solution up to an eventual explosion time. Other regimes of well-posedness were studied in the works \cite{mou2022wellposedness,gangbo2022mean}.

Concerning master equations on a finite state space, since the initial findings presented in \cite{P_L_Lions}, a lot of results have been obtained. Without being exhaustive, we can mention that two different types of common noise were introduced and studied in \cite{bertucci2018remarks,BAYRAKTAR202198}. A regularizing effect was established in \cite{bertucci2021master} under strong monotonicity assumptions. The paper \cite{cecchin2022weak} made the link between the master equation and the associated HJB equation in the potential case and \cite{bertucci2022mean} dealt with the convergence of the master equation in a finite state toward the master equation associated with a continuous state space when the number of states goes to infinity. Finally \cite{lions2023linear} explored existence and non uniqueness of solutions with irregular coefficients. 
None of these papers addresses the type of noise we propose to model here. Nonetheless, we shall follow an approach in the spirit of \cite{bertucci2023lipschitz,P_L_Lions}. Similar problems were encountered in the study of forward-backward stochastic differential equations (FBDSE) \cite{doi:10.1137/S0363012996313549}, we come back on the link between their approach and ours at the end of the paper.

\paragraph{Modelling comments}
Throughout this paper, we are not going to enter too much in details on the MFG interpretation of the master equation. While the main reason for this choice is that we focus here on the mathematical analysis of \eqref{MFG backward}, we believe important to mention another reason for this. In general, the class of MFG master equation naturally models situations which are not proper MFG, but general economic equilibria of large populations. This is for instance the case in \cite{bertucci2020mean,achdou2022class,bertucci2023singular}. We can also mention \cite{lions2021extended} which explains why the class of so-called extended MFG can naturally arise and lead to similar master equations. Hence, we shall not require here 
\[\Omega=\{x\in \left(\reels^+\right)^d \quad  \displaystyle\sum_i x_i=1\},\]
even if this set is extremely natural for finite state MFG. We refer to the appendix of \cite{JEP_2021__8__1099_0} for an explanation of how we can pass from our framework to this one.

\paragraph{Organization of the paper.}

In Section \ref{sec:autonomous}, we study the master equation when the noise process is solution of an autonomous stochastic differential equation. We start by establishing an a priori estimate and then prove results of existence and uniqueness of a global in time Lipschitz solution of the master equation.

In Section \ref{sec:non-autonomous}, we assume the dynamics of the noise may depend on the distribution of players and the value function of the game. We start by explaining the main difficulties arising from this dependence. We then proceed to justify some structural estimates on the drift and prove results of existence and uniqueness of a global Lipschitz solution in this case as well.

\section{The case of an autonomous noise process}\label{sec:autonomous}

In this section, we assume $b$ is a function of $p$ only. In other words, we are interested in the well-posedness of 

\begin{equation}
\label{MFG forward}
\left\{
\begin{array}{c}
\partial_t U+F(x,p,U)\cdot \nabla_x U+b(p)\cdot \nabla_p U-\sigma \Delta_p U=G(x,p,U) \text{ for }  t\in(0,T), x \in \Omega,p \in\R^m, \\
U(0,x,p)=U_0(x,p) \text{ for } (x,p)\in \Omega\times \reels^m.
\end{array}
\right. 
\end{equation}

Note that we have reversed time compared to \eqref{MFG sans bruit}, mainly to lighten notations. Recall that the unknown of the previous equation is a function $U : [0,T]\times\Omega\times \R^m \to \R^d$.
The functions $(F,G,b)$ are always assumed to be Lipschitz continuous (at least locally) in all the variables. We further assume that $\Omega$ is a smooth bounded domain of $\reels^d$ and that $F$ satisfies the following condition on its boundary:
\begin{equation}
\forall x\in \partial \Omega, (u,p)\in \reels^d\times \reels^m \quad \eta(x)\cdot F(x,p,u)\geq 0 ,
\end{equation}
where $\eta(x)$ denotes the normal vector to $\partial\Omega$ in $x$ directed toward the outside of the domain. This condition ensures we do not need to concern ourselves with boundary conditions for $U$ on $\partial\Omega$. Furthermore, this kind of assumptions is natural in the context of MFG.

\subsection{A propagation of monotonicity argument}
It is well known that nonlinear PDE of the form of \eqref{MFG forward} can create shocks or singularity in finite time, such as Burger's equation. In general, structural conditions are required to prevent such explosive phenomena. We refer to P.L. Lions lectures at Collège de France \cite{P_L_Lions} for extensive results concerning the PDE
\[\partial_t U+F(x,U)\cdot \nabla_x U=G(x,U) \text{ for } (t,x)\in (0,\infty)\times \R^d.\]
In particular, in a monotone regime, he showed, by a propagation of monotonicity argument, that a priori estimates can be obtained for such an equation. We shall say that we are in the monotone regime when the following Hypothesis holds.
\begin{hyp}
\label{hyp:monotone}
There exists $\alpha > 0$ such that $U_0$ is $\alpha-$monotone and $(G,F)$ is strongly monotone in $x$, i.e. for almost every $(x,u)$ in $\Omega\times \reels^d$, $\forall (\xi,\nu)\in\reels^{d}\times\reels^d$
\begin{equation}
    \xi^T (D_xU_0) \xi\geq \alpha |D_xU_0\cdot\xi|^2,
\end{equation}
 \begin{equation}
    \label{(F,G) monotone diff}
    (\xi,\nu)^T\left(\begin{array}{cc} D_x G& D_u G\\ D_x F & D_u F\end{array}\right) (\xi,\nu)\geq \alpha  |\xi|^2.
    \end{equation}
\end{hyp}

\begin{remarque}
It would be sufficient to require $(G,F)$ to be $\alpha-$monotone in $x$ instead of strong monotonicity. To avoid technical difficulties we here make a slightly stronger assumption as we delay such extension to a later section. In \cite{P_L_Lions}, other variants of the previous regime were also studied. The same approach can be carried on here. Indeed, we could also require $(G,F)$ to be $\alpha$-monotone with respect to $u$ instead of $x$, in which case the assumption on $U_0$ can be weakened. As this extension does not raise any particular difficulty, we leave it to the interested reader.
\end{remarque}

A similar approach is also valid in the case of \eqref{MFG forward} as the next result presents

\begin{prop}
\label{prop monotonicity}
    Assume that there exists $\alpha >0$ such that Hypothesis \ref{hyp:monotone} holds and let U be a smooth solution of \eqref{MFG forward}, uniformly Lipschitz in $(x,p)$.
    Then, for all $t \in [0,T], p \in \R^m$, $x \to U(t,x,p)$ is monotone. Furthermore, there exists a constant $C>0$ depending only on $F,G,b,U_0$ and $T$ such that 

    \[\forall t\leq T, \quad \|D_xU(t,\cdot)\|_\infty\leq C.\]
\end{prop}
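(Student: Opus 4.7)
The strategy is a propagation of monotonicity along the stochastic characteristics of~\eqref{MFG forward}. Fix $(t,x_0,p_0)$ and consider, for $s\in[0,t]$,
\[
dX_s = -F(X_s,p_s,V_s)\,ds,\qquad dp_s = -b(p_s)\,ds + \sqrt{2\sigma}\,dW_s,
\]
with $V_s := U(t-s,X_s,p_s)$. Applying Itô's formula and using~\eqref{MFG forward}, one finds $dV_s = -G(X_s,p_s,V_s)\,ds + \sqrt{2\sigma}\,\nabla_p U(t-s,X_s,p_s)\cdot dW_s$, with $V_0 = U(t,x_0,p_0)$ and $V_t = U_0(X_t,p_t)$. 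The boundary sign condition on $F$ ensures $X_s$ remains in $\overline{\Omega}$.

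To prove monotonicity I take two initial points $x^1,x^2$ with a common $p_0$ and Brownian motion, and set $\delta X_s = X^1_s - X^2_s$, $\delta V_s = V^1_s - V^2_s$. Since $d\delta X$ has no martingale part, Itô's product rule gives
\[
d(\delta X_s\cdot\delta V_s) = -\bigl[(F^1-F^2)\cdot\delta V_s + \delta X_s\cdot(G^1-G^2)\bigr]\,ds + (\mathrm{mart.}).
\]
Representing $F^1-F^2$ and $G^1-G^2$ as line integrals of their differentials and invoking Hypothesis~\ref{hyp:monotone} yields, after taking expectation,
\[
\tfrac{d}{ds}\E{\delta X_s\cdot\delta V_s} \leq -\alpha\,\E{|\delta X_s|^2}\leq 0.
\]
Monotonicity of $s\mapsto \E{\delta X_s\cdot\delta V_s}$, combined with the integrated form of the $\alpha$-monotonicity hypothesis on $U_0$, which gives $\E{\delta X_t\cdot\delta V_t}\geq \alpha\E{|\delta V_t|^2}\geq 0$, shows that $(x^1 - x^2)\cdot (U(t,x^1,p_0) - U(t,x^2,p_0)) \geq 0$, i.e.\ the monotonicity of $U(t,\cdot,p_0)$.

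For the uniform Lipschitz bound I linearize and work with the tangent processes $\zeta_s,\eta_s$ obtained by differentiating $X_s, V_s$ with respect to $x_0$ in a direction $\xi\in\R^d$. They satisfy a linear SDE with $\zeta_0 = \xi$, $\eta_0 = D_xU(t,x_0,p_0)\xi$, $\eta_t = D_xU_0(X_t,p_t)\zeta_t$, and a martingale part $Y_s\,dW_s$ in $d\eta_s$. The right quantity to track is
\[
Q_s := \zeta_s\cdot\eta_s - \alpha(s)|\eta_s|^2,\qquad\text{with }\alpha(s) = \alpha\,e^{-C(t-s)},
\]
so that $\alpha(t)=\alpha$ and $\E{Q_t}\geq 0$ directly from Hypothesis~\ref{hyp:monotone}. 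A computation combining the bound on $\tfrac{d}{ds}\E{\zeta\cdot\eta}$ above with Itô's expansion of $|\eta|^2$ (which produces a favourable $-\alpha(s)\E{|Y_s|^2}\leq 0$ plus bounded cross-terms in $\eta\cdot\zeta$), absorbing the latter into $-\alpha\E{|\zeta|^2}$ via Young's inequality, shows $\tfrac{d}{ds}\E{Q_s}\leq 0$ provided $C$ is large enough depending only on the Lipschitz constants of $F,G$. Hence $Q_0\geq \E{Q_t}\geq 0$, so $\xi\cdot D_xU(t,x_0,p_0)\xi\geq \alpha(0)|D_xU(t,x_0,p_0)\xi|^2$, and Cauchy--Schwarz gives $\|D_xU(t,\cdot)\|_\infty\leq 1/\alpha(0)\leq e^{CT}/\alpha$, uniform in $t\in[0,T]$.

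The main difficulty is the balancing in the differential inequality for $\E{Q_s}$: all non sign-definite cross-terms---arising both from the nonlinear dependence of $(F,G)$ on $U$ and from the Brownian bracket of $\eta_s$---must be absorbed simultaneously into the negative contribution $-\alpha\E{|\zeta|^2}$ furnished by Hypothesis~\ref{hyp:monotone} and the "free" decay $-\alpha'(s)\E{|\eta|^2}$ coming from the decreasing exponential weight. The assumed smoothness and uniform Lipschitz regularity of $U$ are used throughout to justify the existence of the tangent processes and the Itô manipulations; crucially, the final constant $C$ depends on $F,G,b,U_0$ and $T$ only, and not on the a priori Lipschitz constant of $U$.
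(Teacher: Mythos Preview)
Your argument is correct and follows a genuinely different, more probabilistic route than the paper. The paper introduces the auxiliary function
\[
Z(t,x,p,\xi)=\xi\cdot\nabla_x(U\cdot\xi)-\beta(t)\,|D_xU\cdot\xi|^2
\]
on the extended state space $(t,x,p,\xi)$, derives a linear PDE for $Z$ that also involves a transport term in $\nabla_\xi Z$ (this is where the $D_uF$ and $D_uG$ terms are absorbed), and then establishes $Z\geq 0$ by a comparison principle realised through stochastic characteristics in $(x,p,\xi)$. You instead work directly with the forward--backward pair $(X_s,V_s)$ and its linearisation $(\zeta_s,\eta_s)$, tracking $\esp{Q_s}=\esp{\zeta_s\cdot\eta_s-\alpha(s)|\eta_s|^2}$. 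The two quantities are essentially the same object---your $Q_s$ is the paper's $Z$ evaluated along its characteristic in $\xi$---but the bookkeeping differs: the paper hides the $D_u$-terms in the $\xi$-transport operator, whereas you must absorb them by Young's inequality and the decaying weight $\alpha(s)$, which is why your exponent contains $\|D_uG\|$ rather than the paper's $\|D_xF\|$. Your route is closer in spirit to the FBSDE literature and arguably more transparent for smooth $U$; the paper's PDE formulation pays off later, since the auxiliary function $Z$ is exactly what is generalised (via viscosity techniques on a doubled-variable analogue) to obtain the same bounds for merely Lipschitz solutions, where tangent processes are not available.
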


\begin{proof}

\quad

\noindent \textit{An equation on the gradient}
\quad 

First of all, by writing $W=U\cdot \xi$ let us remark that $W$ is solution of 
\[\partial_t W+F\cdot\nabla_xW+b\cdot\nabla_p W-\sigma\Delta_p W=G\cdot \xi  .\]
Consider:
\begin{align*}
Z(t,x,p,\xi)&=\xi \cdot \nabla_x (U\cdot \xi)-\beta(t)|D_x U\cdot \xi|^2\\
&=\xi \cdot \nabla_x W-\beta(t)|\nabla_x W|^2 ,
\end{align*}
for a smooth function of time $\beta$ which has yet to be chosen. We can express the derivatives of $Z$ in terms of $(W,\beta)$:
\begin{equation*}
\left\{
\begin{array}{c}
\nabla_x Z=(\xi-2\beta(t)\nabla_x W) D^2_x W,\\
\nabla_p Z=(\xi-2\beta\nabla_x W)\cdot D^2_{px} W,\\
\Delta_p Z=(\xi -2\beta(t)\nabla_x W) \nabla_x \Delta_p W-2\beta(t)|D^2_{xp}W|^2,\\
\nabla_{\xi} Z=\nabla_x W+D_xU\cdot(\xi-2\beta\nabla_x W).
\end{array}
\right.
\end{equation*}
Since $U$ is smooth, we can derive directly the equation satisfied by $W$ to get the one satisfied by its gradient 
\begin{align*}
\partial_t(\nabla_x W)&=\nabla_x\left(G\cdot\xi-F\cdot \nabla_x W-b\cdot \nabla_p W+\sigma \Delta_p W\right)\\
&=\nabla_x (G\cdot\xi)-\nabla_x(F)\cdot \nabla_x W-F\cdot D^2_x W-b\cdot D^2_{xp} W+\sigma \Delta_p \nabla_x W .
\end{align*}
From which we get 
\begin{align*}
\partial_t Z&=-\beta'|\nabla_x W|^2+(\xi-2\beta\nabla_x W)\cdot \partial_t(\nabla_x W)\\
&=-\beta'|\nabla_x W|^2\\
&+(\xi-2\beta(t)\nabla_x W)\cdot \left(D_xG^T\xi-D_xF^T\nabla_xW\right)\\
&+(\xi-2\beta(t)\nabla_x W)\cdot\left(D_xU^TD_uG^T \xi-D_xU^T D_uF^T\nabla_x W\right)\\
&+(\xi-2\beta(t)\nabla_x W)\cdot\left(-FD^2_{x}W-bD^2_{xp} W+\sigma \Delta_p \nabla_x W\right).
\end{align*}
Remark that: 
\begin{align*}
    (\xi-2\beta(t)\nabla_x W)\cdot\left(D_xU^TD_uG^T \xi\right)&=\left(D_xU\cdot(\xi-2\beta(t)\nabla_x W)\right)\cdot\left(D_uG^T \xi\right)\\
    &=\left(\nabla_{\xi}Z-\nabla_x W\right)\cdot\left(D_uG^T \xi\right).
\end{align*}
and that a similar calculation may be performed for the term in $D_uF$.\\
We may also notice that
\begin{align*}
    (\xi-2\beta(t)\nabla_x W)\cdot\left(-FD^2_{x}W-bD^2_{xp} W+\sigma \Delta_p \nabla_x W\right)&=-F\cdot \nabla_xZ-b\cdot \nabla_p Z+\sigma\Delta_p Z+2\beta(t)\sigma |D^2_{xp}W|^2.
\end{align*}
Looking at the equation satisfied by $Z$ we get
\begin{align*}
\nonumber&\partial_t Z-\sigma\Delta_p Z+b\cdot \nabla_p Z+F\cdot \nabla_x Z+\left(\nabla_xWD_uF-\xi D_u G\right)\cdot \nabla_{\xi} Z\\
&=2\beta\sigma|D^2_{xp}W|^2-\frac{d\beta}{dt}|\nabla_x W|^2\\
\nonumber&+\xi D_xG\xi-\xi D_uG\nabla_xW-\nabla_x WD_xF\xi+\nabla_xWD_uF\nabla_xW\\
\nonumber&+2\beta\left(\nabla_x W D_x F \nabla_x W-\xi D_x G\nabla_x W\right) .
\end{align*}
We can now make use of the monotonicity of the couple $(G,F)$ by noticing that
\begin{align*}
    \xi D_xG\xi-\xi D_uG\nabla_xW-\nabla_x WD_xF\xi+\nabla_xWD_uF\nabla_xW&= \left(\begin{array}{c}-\xi\\ \nabla_x W\end{array}\right)^T\left(\begin{array}{cc} D_x G\quad D_u G\\ D_x F\quad D_u F\\\end{array}\right)\left(\begin{array}{c}-\xi\\ \nabla_x W\end{array}\right)\\
    &\geq \alpha |D_xG\xi|^2
\end{align*}
by assumption.
Now,

\begin{align*}
&\partial_t Z-\sigma\Delta_p Z+b\cdot \nabla_p Z+F\cdot \nabla_x Z+\left(\nabla_xWD_uF-\xi D_u G\right)\cdot \nabla_{\xi} Z\\
&\geq \alpha|D_xG\cdot \xi|^2-\frac{d\beta}{dt}|\nabla_x W|^2+2\beta\left(\nabla_xWD_xF\nabla_xW-\xi D_xG\nabla_xW\right)\\
&\geq (\alpha-\beta(t))|D_x G \cdot\xi|^2+\left(\beta(t)(\|D_xG\|^2+2\|D_xF\|)-\frac{d\beta}{dt}(t)\right)|\nabla_xW|^2.
\end{align*}
Let us take  $\beta_0<\alpha$ and $\beta(t)=\beta_0e^{-\lambda_\beta t}$ with $\lambda_\beta$ chosen in such fashion that 

\[\lambda_\beta\geq \lambda_\beta^*=\|D_xG\|^2+2\|D_xF\|.\]

\quad

By assumption on $U_0$ and for this well chosen $\beta$ we have:

\begin{equation}
\label{eq Z smooth }
\left\{
\begin{array}{c}
Z|_{t=0}\geq 0,\\
  \partial_t Z-\sigma\Delta_p Z+b\cdot \nabla_p Z+F\cdot \nabla_x Z+\left(\nabla_xWD_uF-\xi D_u G\right)\cdot \nabla_{\xi} Z\geq 0  \text{ for } (0,T)\times \Omega\times \reels^m.\\
\end{array}
\right. 
\end{equation}

\quad

\noindent\textit{$Z$ is non-negative}
\quad 
The non-negativity of $Z$ is standard and follows classical comparison results. We recall the main argument for the sake of completeness.
Recall that $U$ is Lipschitz and $D_xU$ is hence bounded. As a consequence the SDE
\begin{equation*}
\left\{
\begin{array}{l}
     \displaystyle p_t=p_0-\int_0^t b(p_s)ds+\sqrt{2\sigma}W_t, \\
     \displaystyle X_t=x-\int_0^t F(X_s,p_s,U(\theta-s,X_s,p_s))ds,\\
     \displaystyle \xi_t=\xi_0-\int_0^t \left(D_xU(\theta-s,X_s,p_s)\cdot\xi_s D_uF(X_s,p_s,U(\theta-s,X_s,p_s))-\xi_sD_uG(X_s,p_s,U(\theta-s,X_s,p_s))\right)ds.
\end{array}
\right.
\end{equation*}
are well defined on $[0,\theta]$ for any $\theta<T$. Since Z has at most quadratic growth in $\xi$ uniformly in $(x,p)$, applying Ito's Lemma to 
\[Z(\theta,x,p_0,\xi_0)-Z(0,X_{\theta},p_{\theta},\xi_{\theta})\]
and using \eqref{eq Z smooth } yields 
\[\forall t<T \quad \forall (x,p_0,\xi_0)\in \Omega \times \reels^m\times \reels^d \quad Z(t,x,p_0,\xi_0)\geq \esp{Z_0(X_t,p_t,\xi_t)}\geq 0\]

\noindent\textit{Estimate on the gradient}
\quad 
Of course, the non-negativity of Z on $[0,T]$ gives an estimate on the Lipschitz norm of $U$ in $x$:
\begin{align*}
    Z(t,x,p,\xi)\geq 0 &\iff \xi \cdot D_xU(t,x,p) \cdot \xi \geq \beta(t) \|D_xU(t,x,p)\cdot \xi\|^2\\
    &\implies \frac{\|D_xU(t,x,p)\cdot \xi\|}{\|\xi\|}\leq \frac{1}{\beta(t)} .
\end{align*}
Taking the supremum over all $(\xi,x,p)$ yields the required estimate:

\[\forall t\leq T, \quad \|D_xU(t,\cdot)\|_\infty\leq \frac{1}{\beta(t)}\leq \frac{2}{\alpha}e^{\lambda_\beta T}.\]
The monotonicity of $U$ in $x$ is a trivial consequence of $Z\geq 0$.

\end{proof}
We shall see later on that this estimate is crucial to obtain the well-posedness of \eqref{MFG forward}.


\subsection{On Lipschitz solutions}
Lipschitz solutions are weak solutions of the master equation. The main idea here is that should the vector fields $F\circ U$ and $G\circ U$ be known and Lipschitz continuous, finding the solution of the system reduce to a simple application of the Feynman-Kac formula. As a consequence we can define the solution of the master equation as a fixed point of a Feynman-Kac inspired functional. The main advantage of this approach is that it allows to define a concept of solution of \eqref{MFG forward} for solutions which are merely Lipschitz continuous. We refer the reader to the paper \cite{bertucci2023lipschitz} for a global presentation of this notion. This notion was only stated in the absence of the noise process $p$. Here, we only indicate how to adapt the results of \cite{bertucci2023lipschitz} to the case of \eqref{MFG forward} as the extension is quite immediate.

\quad

Consider two vector fields $A:\FuncDef{\reels^{+*}\times\reels^d\times\reels^m}{\reels^d}{(t,x,p)}{A(t,x,p)}$ and $B:\FuncDef{\reels^{+*}\times\reels^d\times\reels^m}{\reels^d}{(t,x,p)}{B(t,x,p)}$ Lipschitz in $(x,p)$ uniformly in $t$, such that $\Omega$ is left invariant by $A$, an initial condition $U_0$ Lipschitz in $(x,p)$ and a Lipschitz continuous $b:\reels^m\to\reels^m$. 
Let $(\Omega_\mathcal{P},\mathcal{F},\mathbb{P})$ be a sufficiently rich probability space, we define the functional $\psi$ by $\psi(T,A,B,U_0)=V$ if 

\[\left\{
\begin{array}{c}
 \forall t\in[0,T), (x,p)\in\Omega\times\reels^m \quad V(t,x,p)=\displaystyle\mathbb{E}\left[\left.\int_0^{t}A(t-s,X_s,p_s)ds+U_0(X_t,p_t)\right|X_0=x,p_0=p\right],\\
dX_s=-B(t-s,X_s,p_s)dt,\\
dp_s=-b(p_s)dt+\sqrt{2\sigma}dW_s \text{ for } W \text{ a Brownian motion on } \reels^m \text{ under }\mathbb{P}.\\
\end{array}
\right.
\]
When the function $V$ defined in such a way is smooth we expect it to be solution of:
\begin{equation*}
\left\{
\begin{array}{c}
      \partial_t V+B(t,x,p)\cdot \nabla_x V+b(p)\cdot \nabla_p V-\sigma \Delta_p V=A(t,x,p) \text{ in } (0,T) \times \Omega\times \reels^m,\\
     V|_{t=0}=U_0.\\
\end{array}
\right. 
\end{equation*}

Which leads to the following definition of a Lipschitz solution of the master equation \eqref{MFG forward}:

\begin{definition}
Let $T>0$, $U:[0,T)\times \Omega\times\reels^m\to \reels^d$ is said to be a Lipschitz solution of \eqref{MFG forward} if :
\begin{itemize}
    \item[-] $U$ is Lipschitz in $(x,p)$ uniformly in $t\in[0,\alpha]$ for all $\alpha$ in $[0,T)$
    \item[-] for all $t<T$: \[U=\psi(t,G(\cdot,U),F(\cdot,U),U_0).\]
\end{itemize}
\end{definition}

Let us now state the following result on Lipschitz solutions in this context, which is a local existence theorem analogous to the Cauchy-Lipschitz theorem one could get for ordinary differential equations.
\begin{thm}
\label{lip sol b(p)}
    Let $(F,G,b,U_0)$ be Lipschitz in all the variables, then:
    \begin{enumerate}
        \item[-] There always is an existence time  $T>0$ such that we have a unique solution in the sense of the aforementioned definition on $[0,T)$.
        \item[-] There exist $T_c>0$ and a maximal solution $U$ defined on $[0,T_c[$ such that for all Lipschitz solutions $V$ defined on $[0,T)$ we have: $T\leq T_c$ and $U|_{[0,T)}\equiv V$.
        \item[-] If $T_c<\infty$ then $\underset{t\to T_c}{\lim} \|U(t,\cdot)\|_{Lip}=+\infty$.
    \end{enumerate}
\end{thm}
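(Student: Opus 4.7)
The plan is to view Theorem \ref{lip sol b(p)} as a parabolic analogue of the Cauchy--Lipschitz theorem, with the Feynman--Kac functional $\Phi(U) := \psi(\cdot,G(\cdot,U),F(\cdot,U),U_0)$ playing the role of the Picard integral operator. The three items of the theorem follow from (i) a short-time contraction argument yielding local existence and uniqueness, (ii) a gluing argument producing the maximal solution, and (iii) a restartability argument for the blow-up criterion.

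For (i), fix $L>L_0:=\|U_0\|_{\mathrm{Lip}}$ and consider, for $T>0$ to be chosen, the complete metric space
\[
E_{T,L}=\left\{U:[0,T]\times\Omega\times\reels^m\to\reels^d \;\big|\; U(0,\cdot)=U_0,\ \|U(t,\cdot)\|_{\mathrm{Lip}}\le L \text{ for every }t\in[0,T]\right\}
\]
endowed with the $L^\infty$ distance. For any $U\in E_{T,L}$ the vector fields $A=G(\cdot,U)$ and $B=F(\cdot,U)$ are Lipschitz in $(x,p)$ with constant bounded by $C(1+L)$, so the SDE defining $(X_s,p_s)$ is well-posed and its flow is Lipschitz in the initial condition with constant bounded by $e^{CT(1+L)}$ via a standard Gronwall estimate in expectation. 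Differentiating the Feynman--Kac representation in $(x,p)$ and using the flow estimate yields $\|\Phi(U)(t,\cdot)\|_{\mathrm{Lip}}\le L$ for $T$ small enough, depending only on $L_0$ and the data. Comparing $U_1,U_2\in E_{T,L}$ by coupling their characteristic systems with the same Brownian motion and applying Gronwall then gives
\[
\|\Phi(U_1)-\Phi(U_2)\|_{L^\infty} \le \kappa(T,L)\,\|U_1-U_2\|_{L^\infty},
\]
with $\kappa(T,L)\to 0$ as $T\to 0$, so Banach's theorem produces a unique fixed point, which is the desired local Lipschitz solution.

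For (ii), set $T_c$ to be the supremum of all existence times and take two Lipschitz solutions defined on $[0,T_1)$ and $[0,T_2)$. The set of times at which they coincide is non-empty, closed by continuity, and open by re-running the local argument of (i) from any common value; it is therefore equal to $[0,\min(T_1,T_2))$. Gluing all local solutions defines a unique maximal $U$ on $[0,T_c)$. For (iii), the essential feature of the contraction argument is that the existence time delivered by Banach's theorem depends only on $\|U_0\|_{\mathrm{Lip}}$ and on the local Lipschitz constants of $(F,G,b)$. If $T_c<\infty$ and $\liminf_{t\to T_c}\|U(t,\cdot)\|_{\mathrm{Lip}}<\infty$, one picks $t_0<T_c$ at which $U(t_0,\cdot)$ has bounded Lipschitz norm and restarts the problem from this initial condition, producing a Lipschitz solution on $[t_0,t_0+\tau]$ with $\tau>0$ depending only on that bound; choosing $t_0$ with $t_0+\tau>T_c$ contradicts the maximality of $T_c$.

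The main obstacle is the uniform Lipschitz control on $\Phi(U)$ in step (i), since one needs to translate the sensitivity of the stochastic flow $(X_s,p_s)$ in its initial condition into a spatial Lipschitz bound on the conditional expectation defining $\Phi(U)$. Because in this autonomous setting $b$ and $\sigma$ do not depend on $(x,U)$, the $p$-component of the flow is entirely decoupled from the unknown and these estimates reduce to standard Gronwall-type bounds; this decoupling is exactly what breaks down in Section \ref{sec:non-autonomous} and motivates the structural assumptions introduced there.
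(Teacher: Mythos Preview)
The paper does not actually prove this theorem: it explicitly defers to Theorem~1.7 of \cite{bertucci2023lipschitz} and simply remarks that the extension to the present setting with the additional variable $p$ is straightforward. Your Picard/Banach fixed-point architecture (short-time contraction for the Feynman--Kac map, gluing via a connectedness argument, restartability for the blow-up alternative) is exactly the standard strategy behind such results, so your proposal is in line with what the paper invokes.

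One technical point deserves a small correction. As written, $E_{T,L}$ equipped with the plain $L^\infty$ distance is not a metric space: since $p$ ranges over $\reels^m$ and the elements of $E_{T,L}$ are only Lipschitz (hence possibly unbounded), two elements can be at infinite distance. The usual remedy is either to add a constraint of the form $\sup_t\|U(t,\cdot)-U_0\|_\infty\le M$ to the definition of $E_{T,L}$ and check that $\Phi$ preserves it for small $T$, or to work with a weighted sup-norm in $p$. Either fix is routine and does not alter the structure of your contraction and gluing arguments.
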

We do not prove this Theorem as it is a straightforward extension of previously existing results on Lipschitz solutions (see \cite{bertucci2023lipschitz} Theorem 1.7). 

\begin{remarque}
    For a function $f:\reels^k\to\reels^k$ we make use of the notation $\|f\|_{Lip}$ for its Lipschitz semi-norm:
    \[\|f\|_{Lip}=\underset{x\neq y}{\sup}\frac{|f(x)-f(y)|}{|x-y|}.\]
    For a function of several variables $f(x,p)$, we make the following misuse of notation for its Lipschitz constant in $x$ only:
    \[\|D_xf\|_{\infty}=\underset{x\neq y,p}{\sup}\frac{|f(x,p)-f(y,p)|}{|x-y|},\]
    as they are equal for $C^1$ Lipschitz functions.
\end{remarque}

\quad
Note that the previous result holds even if Hypothesis \ref{hyp:monotone} is not satisfied. The next Section is devoted to generalizing the a priori estimate of Proposition \ref{prop monotonicity} to functions which are only Lipschitz solutions of the equation. Namely we want to show that since we have an a priori estimate on $\|D_xU\|_\infty$, the maximal time $T_c$ given in Theorem \ref{lip sol b(p)} is equal to $+\infty$. We end the present Section by explaining how, using the equation, we can deduce a bound on $D_p U$ from a bound on $D_xU$, thus making the estimation of $D_xU$ the crucial step of the argument.

\begin{prop}
\label{b(p) U lipschitz in x implies in p}
    Assume (F,G,b,$U_0$) are Lipschitz in all the variables. For any $T>0$, there exists $C >0$ such that for any $U$ Lipschitz solution of \eqref{MFG forward} on $[0,T)$:

    \[\forall t<T, \quad \underset{s\in[0,t]}{\sup}\|D_pU(s,\cdot)\|_\infty\leq C\left(1+\exp\left(t\underset{s\in[0,t]}{\sup}\|D_xU(s,\cdot)\|_\infty\right)\right).\]
\end{prop}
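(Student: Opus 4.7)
The plan is to extend the gradient-auxiliary-function technique used in Proposition \ref{prop monotonicity} from the $x$-derivative to the $p$-derivative. Because we only aim to bound $D_p U$ (and not establish any monotonicity), the appropriate scalar auxiliary is simply $|D_p U|^2$, which I would show satisfies a linear transport-diffusion inequality whose source is controlled by $\|D_x U\|_\infty$; Grönwall then yields the announced exponential bound.

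I would first work formally, assuming $U$ smooth, and set $V^j := \partial_{p_j} U \in \reels^d$. Differentiating \eqref{MFG forward} with respect to $p_j$ gives
\[\partial_t V^j + F\cdot \nabla_x V^j + b\cdot \nabla_p V^j - \sigma\Delta_p V^j = (D_u G)V^j - (D_u F)V^j \cdot \nabla_x U + \partial_{p_j}G - \partial_{p_j}F\cdot \nabla_x U - \sum_k (\partial_{p_j}b_k)V^k,\]
in which each occurrence of $\nabla_x U$ on the right-hand side is multiplied by a factor carrying at most one copy of $V$. Taking the scalar product with $V^j$, summing over $j$, and using $\sum_j V^j\cdot \Delta_p V^j = \tfrac12 \Delta_p |V|^2 - |D_p V|^2 \leq \tfrac12 \Delta_p |V|^2$, I would then obtain the closed scalar inequality
\[(\partial_t + F\cdot \nabla_x + b\cdot \nabla_p - \sigma\Delta_p)|V|^2 \leq C\left(1 + \|D_x U(t,\cdot)\|_\infty\right)\left(1 + |V|^2\right),\]
with $C$ depending only on the Lipschitz constants of $F, G, b$. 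Applying the Feynman-Kac representation to this linear inequality, along characteristics of the transport-diffusion operator (exactly as in the second half of the proof of Proposition \ref{prop monotonicity}), produces
\[1 + \|V(t,\cdot)\|_\infty^2 \leq \left(1 + \|D_p U_0\|_\infty^2\right)\exp\!\left(C\int_0^t \left(1 + \|D_x U(s,\cdot)\|_\infty\right)ds\right),\]
from which the required Lipschitz bound follows after taking square roots and absorbing constants.

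The main obstacle is that a Lipschitz solution is not smooth enough for the differentiation in $p$ to be meaningful. I would resolve this by approximation: mollify the data $(F,G,b,U_0)$ to obtain smooth coefficients with uniformly bounded Lipschitz norms, apply Theorem \ref{lip sol b(p)} to obtain smooth solutions $U^\varepsilon$ on a common time interval, apply the a priori estimate above (which is independent of $\varepsilon$ since it depends only on $t$, on $L_x(t)$, and on the Lipschitz constants of the data), and pass to the limit using the stability built into the Lipschitz-solution framework of \cite{bertucci2023lipschitz}. An alternative, staying entirely within the Lipschitz setting, would be to compare, in the fixed-point identity $U = \psi(t, G(\cdot, U), F(\cdot, U), U_0)$, two characteristic paths emanating from $(x,p)$ and $(x,p')$: the autonomous $p$-dynamics immediately give $|p_s^p - p_s^{p'}| \leq e^{\|b\|_{Lip}T}|p-p'|$, while Grönwall on the $X$-equation produces a bound for $|X_s^{x,p} - X_s^{x,p'}|$ in terms of $L_x(t)$ and $M(t):=\sup_{s\leq t}\|D_p U(s,\cdot)\|_\infty$; substituting into the fixed-point formula yields a self-referential estimate of the schematic form $M(t)\leq \Psi(t)(1+M(t)) + B(t)$, the self-reference arising because the drift $F$ depends on $U$. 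This would have to be closed by a bootstrap on a subdivision of $[0,t]$ into intervals short enough that $\Psi < 1/2$, after which the successive bounds compose to give the same exponential as before. Either route ultimately produces the required estimate.
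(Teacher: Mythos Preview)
Your alternative route --- comparing the characteristic trajectories from $(x,p)$ and $(x,p')$ in the fixed-point identity --- is exactly what the paper does, and it closes more simply than you anticipate. Because $b$ depends on $p$ alone, the bound $\mathbb{E}|p_s^{p}-p_s^{p'}|\le C_b|p-p'|$ is immediate; Grönwall on the $X$-equation then yields
\[\mathbb{E}|X_u^{p}-X_u^{p'}|\le C e^{(1+M_t)u}|p-p'|\Big(u+\int_0^u\|D_pU(t-s,\cdot)\|_\infty\,ds\Big),\]
with $M_t=\sup_{s\le t}\|D_xU(s,\cdot)\|_\infty$. Substituting into the representation of $U$ gives not a pointwise self-reference $M(t)\le\Psi(t)(1+M(t))+B(t)$ but an \emph{integral} inequality
\[\|D_pU(t,\cdot)\|_\infty\le C\Big(1+e^{M_t t}+\int_0^t\|D_pU(s,\cdot)\|_\infty\,ds\Big),\]
and a single application of Grönwall closes it --- no subdivision or bootstrap is needed. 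The key is to keep the $D_pU$-dependence under the integral sign rather than replacing it by its supremum prematurely.

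Your primary approach (differentiating the system in $p$ and tracking $|D_pU|^2$) is a legitimate alternative for smooth $U$, and the differential inequality you derive is correct. The gap is in the reduction to Lipschitz solutions: Theorem~\ref{lip sol b(p)} produces Lipschitz, not smooth, solutions, so you would have to argue separately that mollified data yield $U^\varepsilon$ regular enough to differentiate. This is plausible via parabolic regularity when $\sigma>0$, but the paper explicitly allows $\sigma=0$, where smoothness of $U^\varepsilon$ requires an independent argument by characteristics. You would also need stability of $\|D_xU^\varepsilon\|_\infty$ to control the constant in the limit. The paper's direct route sidesteps all of this by working entirely at the level of the representation formula, which is the defining property of a Lipschitz solution and requires no derivatives of $U$.
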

\begin{proof}
    Let us fix $T>0$ and consider a Lipschitz solution $U$ on $[0,T)$.

    First, we can remark that, since $b$ depends only on $p$, $(p_s)_{s\geq 0}$ satisfies an autonomous stochastic differential equation independent of $U$. As $b$ is Lipschitz, the classical results for the continuity of SDE with respect to initial conditions are available. \\
    let $t<T$, we define:
    \[\forall u\in[0,t] \quad X^{p_0}_u=x-\int_0^uF(X^{p_0}_s,p_s,U(t-s,X^{p_0}_s,p^{p_0}_s))ds,\]
    where
    \[p^{p_0}_u=p_0-\int_0^u b(p_s)ds+\sqrt{2\sigma}W_u\]
and \[M_t=\underset{s\in[0,t]}{\sup}\|D_xU(s,\cdot)\|_\infty,\] which is finite as $t<T$.
    \begin{align*}
        \esp{|X_u^{p_0}-X_u^{q_0}|}\leq \|F\|_{Lip}\int_0^u\left( (1+M_t)\esp{|X_s^{p_0}-X_s^{q_0}|}+(1+\|D_pU(t-s,\cdot)\|_\infty)\esp{|p_s^{p_0}-p_s^{q_0}|}\right)ds  .
    \end{align*}
    From Gronwall's Lemma applied to standard Lipschitz SDE we know there exists a constant $C_b>0$ depending only on $\|b\|_{Lip}$ and $T$ such that
    \[\forall s\in[0,T],\quad \esp{|p_s^{p_0}-p_s^{q_0}|}\leq C_b|p_0-q_0|.\]
    We can conclude by yet another Gronwall's Lemma that
    \[\forall u\in[0,t],\quad  \esp{|X_u^{p_0}-X_u^{q_0}|}\leq C_be^{(1+M_t)u}|p_0-q_0|(u+\int_0^u\|D_pU(t-s,\cdot)\|_\infty ds).\]

Using the fact that $U$ is a Lipschitz solution of the master equation on $[0,t]$ gives
    \begin{align*}
        |U(t,x,p_0)-U(t,x,q_0)|&\leq \esp{\|G\|_{Lip}\int_0^t((1+M_t)|X_s^{p_0}-X_s^{q_0}|+(1+\|D_pU(t-s)\|_\infty)|p_s^{p_0}-p_s^{q_0}|)ds}\\
        &+\|U_0\|_{Lip}\esp{|X_t^{p_0}-X_t^{q_0}|+|p_t^{p_0}-p_t^{q_0}|}.
    \end{align*}
Hence
    \begin{align*}
    \frac{|U(t,x,p_0)-U(t,x,q_0)|}{|p_0-q_0|}\leq C(1+e^{M_t t}+\int_0^t \|D_pU(t-s,\cdot)\|_\infty ds),
    \end{align*}
    where $C$ is a positive constant depending only on $\|(F,G,b,U_0)\|_{Lip}$ and $T$. 
    Since
    \[\underset{p,q}{\sup} \frac{|U(t,x,p)-U(t,x,q)|}{|p-q|}=\|D_pU(t,\cdot)\|_\infty,\]
    we conclude by Gronwall's lemma that 
    \[ \underset{s\in[0,t]}{\sup}\|D_pU(s,\cdot)\|_\infty\leq C\left(1+e^{tM_t}\right).\]
As this holds true for any $t<T$, the proof is complete. 
\end{proof}

\begin{remarque}
    A similar estimate on $\|D_pU\|_\infty$  can still be obtained when b is assumed to a Lipschitz function of $(x,p)$. The proof is slightly more technical but follows from the exact same argument. Moreover, under the assumption $\sigma>0$, we may be able to get a better bound on the gradient of $U$ in $p$. Indeed, we could get estimates on $D_pU$ once $D_xU$ is bounded by freezing $x$ and considering the semi-linear parabolic system in $p$ only:
\[\partial_t U+b(p)\cdot\nabla_p U-\sigma\Delta_p U=\underbrace{G(x,p,U)-F(x,p,U)\cdot\nabla_xU.}_{\text{source term Lipschitz in }U\text{, uniformly in }x }\]
However because our estimate on $\|D_xU\|_\infty$ is only valid when b depends on p only, we would rather not rely on parabolic regularity. This allows us to consider the fully degenerate case $\sigma=0$.
\end{remarque}

\subsection{Existence of global solutions}

In this section we give sufficient conditions for the existence of a Lipschitz solution of the master equation \eqref{MFG forward} independently of the time horizon $T$. Namely, we show how we can make rigorous the a priori estimate of Proposition \ref{prop monotonicity} for Lipschitz solutions of \eqref{MFG forward}. From Proposition \ref{b(p) U lipschitz in x implies in p}, we know that this would implies a bound on $D_pU$. Hence, since there is always uniqueness of a maximal Lipschitz solution, existence and uniqueness of a global Lipschitz solution shall follow.
The proof of Proposition \ref{prop monotonicity} is based on a derivation of the system \eqref{MFG forward}. Obviously this does not generalise well to Lipschitz solutions, a first step to adapt the argument of Proposition \ref{prop monotonicity} is to state Hypothesis \ref{hyp:monotone} directly at the level of the functions $(U_0,F,G)$ rather than on their gradient. We start with a Lemma,
\begin{restatable}{lemma}{equivalence}
\label{equivalence alpha monotony}
Let $f:\reels^d \to \reels^d$ be a function of class $C^1$, the following two proposition are equivalent:

\begin{enumerate}
    \item[-] $\exists \alpha,\forall x \quad \xi\cdot Df(x)\cdot \xi\geq \alpha |Df\cdot\xi|^2. \quad \quad \quad\quad \quad \quad\quad \quad \quad(i)$
    \item[-] $\exists \alpha, \forall (x,y) \quad \langle f(x)-f(y),x-y\rangle \geq \alpha |f(x)-f(y)|^2. \quad \quad (ii)$
\end{enumerate}
\end{restatable}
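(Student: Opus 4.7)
The plan is to prove the two implications separately, both relying on the one-parameter family connecting $x$ and $y$ (or $x$ and $x+t\xi$), and the fundamental theorem of calculus.

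For $(ii) \Rightarrow (i)$, I would fix $x \in \reels^d$ and a direction $\xi \in \reels^d$, apply $(ii)$ to the pair $(x+t\xi, x)$ with $t>0$ small, and perform a Taylor expansion as $t \to 0$. The left-hand side becomes $\langle f(x+t\xi)-f(x), t\xi\rangle = t^2\, \xi \cdot Df(x) \cdot \xi + o(t^2)$, while the right-hand side becomes $\alpha t^2 |Df(x)\cdot \xi|^2 + o(t^2)$. Dividing by $t^2$ and letting $t \to 0$ yields $(i)$. This direction is essentially a differentiation argument and I do not expect any difficulty.

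For $(i) \Rightarrow (ii)$, the key is to integrate $(i)$ along the segment joining $x$ and $y$. Writing $y-x = \xi$ and $x_t = x + t(y-x)$ for $t\in[0,1]$, the fundamental theorem of calculus gives
\[
f(y)-f(x)=\int_0^1 Df(x_t)\cdot (y-x)\,dt,
\]
so that
\[
\langle f(y)-f(x),y-x\rangle = \int_0^1 (y-x) \cdot Df(x_t) \cdot (y-x)\, dt \geq \alpha \int_0^1 |Df(x_t)\cdot (y-x)|^2\, dt
\]
by $(i)$ applied at each $x_t$ with $\xi = y-x$. I would then use Jensen's inequality for the convex map $u \mapsto |u|^2$ (or equivalently Cauchy--Schwarz in $L^2([0,1])$) to obtain
\[
\int_0^1 |Df(x_t)\cdot (y-x)|^2\, dt \geq \left| \int_0^1 Df(x_t)\cdot (y-x)\, dt \right|^2 = |f(y)-f(x)|^2,
\]
which gives $(ii)$.

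The main step requiring attention is the use of Jensen's inequality to bound the integral of $|Df \cdot \xi|^2$ from below by the square of the integral of $Df \cdot \xi$; this is the place where the pointwise quadratic information $(i)$ gets upgraded to a global statement. Both steps are valid under the $C^1$ hypothesis, so no further regularity is required.
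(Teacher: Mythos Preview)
Your proof is correct. The implication $(ii)\Rightarrow(i)$ is handled the same way in the paper: divide by $t^2$ and let $t\to 0$.

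For $(i)\Rightarrow(ii)$, however, your route is genuinely different from the paper's, and in fact more direct. The paper proceeds by first assuming $f$ is invertible: writing $g=f^{-1}$ and setting $\xi=Dg\cdot\nu$ in $(i)$ gives $\nu\cdot Dg\cdot\nu\geq \alpha|\nu|^2$, i.e.\ the classical strong monotonicity of $g$; integrating this along a segment yields $\langle g(h)-g(z),h-z\rangle\geq \alpha|h-z|^2$, and substituting $h=f(x)$, $z=f(y)$ gives $(ii)$. For non-invertible $f$, the paper perturbs to $f^\varepsilon=f+\varepsilon\,\mathrm{Id}$ (invertible since $(i)$ forces monotonicity), checks that $(i)$ still holds for $f^\varepsilon$ with a constant $\alpha-2\alpha^2\varepsilon$, applies the invertible case, and lets $\varepsilon\to 0$.

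Your argument bypasses both the inversion and the perturbation: integrating $(i)$ directly along the segment and invoking Jensen's inequality for $u\mapsto|u|^2$ handles all $C^1$ functions $f$ at once. This is shorter and avoids the somewhat delicate bookkeeping of how $\alpha$ degrades under the $\varepsilon$-perturbation. The paper's detour through the inverse does have the minor conceptual payoff of explaining \emph{why} the condition is natural (it is exactly strong monotonicity of $f^{-1}$), but as a proof your approach is cleaner.
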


\begin{remarque}
    This elementary result is already well-known, we included a proof in Appendix for the sake of completeness.
\end{remarque}
According to Lemma $\ref{equivalence alpha monotony}$, Hypothesis $\ref{hyp:monotone}$ implies that there exists $\alpha>0$ such that for all $x,y$ in $\Omega$, $u,v$ in $\reels^d$ and $p$ in $\reels^m$
 \begin{equation}
\label{U0 monotone}
\langle U_0(t,x,p)-U_0(t,y,p),x-y\rangle \geq \alpha |U_0(t,x,p)-U_0(t,y,p)|^2,
\end{equation}
\begin{align}
\label{(F,G) monotone}
\langle G(x,p,u)-G(y,p,v),x-y\rangle+&\langle F(x,p,u)-F(x,p,v),u-v\rangle \geq \alpha |x-y|^2 .
\end{align}   
The main idea of this Section is that, to overcome the lack of regularity of a Lipschitz solution, we need to adapt the proof of Proposition \ref{prop monotonicity} for another auxiliary function. Following the equivalence of Lemma \ref{equivalence alpha monotony}, we define the following:
\begin{equation}
\label{def Z b(p)}
Z(t,x,y,p)=\langle U(t,x,p)-U(t,y,p),x-y\rangle-\beta(t)|U(t,x,p)-U(t,y,p)|^2,
\end{equation}
for a Lipschitz solution $U$ of \eqref{MFG forward} and a smooth function of time $\beta$ to be chosen later on. This function shall play the role of $Z$ in the proof of Proposition \ref{prop monotonicity}. If $U$ was smooth we would expect this new function $Z$ defined in \eqref{def Z b(p)} to be a solution of
\begin{align}
\label{eq Z b(p)}
\nonumber&\partial_t Z+F^x\cdot\nabla_x Z+F^y\cdot \nabla_y Z +b(p)\cdot \nabla_p Z-\sigma \Delta_p Z\\
&=\langle G^x-G^y,x-y\rangle+\langle F^x-F^y,U^x-U^y\rangle\\
\nonumber&+2\beta(t)\sigma |\nabla_p U^x-\nabla_p U^y|^2-2\beta\langle G^x-G^y,U^x-U^y\rangle-\frac{d\beta}{dt}|U^x-U^y|^2 ,
\end{align}
where we have used the notation $U^x=U(t,x,p)$,$F^x=F(x,p,U^x)$ and $G^x=G(x,p,U^x)$. Recall that in Proposition \ref{prop monotonicity}, we wanted to apply a comparison principle on $Z$. We keep the same strategy here. We are going to show that $Z$ is a super-solution of

\begin{align}
\label{Z supersolution b(p)}
&\partial_t W+F^x\cdot\nabla_x W+F^y\cdot \nabla_y W +b(p)\cdot \nabla_p W-\sigma \Delta_p W\\
\nonumber&\geq \langle G^x-G^y,x-y\rangle+\langle F^x-F^y,U^x-U^y\rangle-2\beta\langle G^x-G^y,U^x-U^y\rangle-\frac{d\beta}{dt}|U^x-U^y|^2.
\end{align}
We now show that $Z$ satisfies equation \eqref{Z supersolution b(p)} in the viscosity sense. Let us remind the definition of a viscosity solution. Let $\mathcal{O}$ be an open subset of $\reels^k$ for some $k\in\mathbb{N}$, $\mathcal{S}_k(\reels)$ be the set of symmetric $k\times k$ matrix and $\mathcal{H}$ be a continuous mapping from $\reels^+\times\mathcal{O}\times\reels\times\reels^k\times\mathcal{S}_k(\reels)\to\reels$. Further assume that $\mathcal{H}$ is degenerate elliptic:
\begin{gather*}
\forall (t,x,u,p,A,B)\in \reels^+\times\mathcal{O}\times \reels\times \reels^k\times(\mathcal{S}_k(\reels))^2\\
A\leq B \implies \mathcal{H}(t,x,u,p,B)\leq \mathcal{H}(t,x,u,p,A),
\end{gather*}
where the inequality between $A$ and $B$ holds for Loewner order. Consider the following non linear partial differential equation
\begin{equation}
\label{visc}
\tag{E}
\partial_t u(t,x)+\mathcal{H}(t,x,u(t,x),\nabla_x u(t,x),D^2_x u(t,x))=0 \text{ for } (t,x) \text{ in } (0,T)\times\mathcal{O}\end{equation}
\begin{definition}
Let $u:(0,T)\times\mathcal{O}\to \reels$ be a continuous function.\\
(i) We say that u is a viscosity supersolution of \eqref{visc} if for any $\varphi\in C^{1,2}((0,T)\times \mathcal{O})$ such that a point of minimum $(t^*,x^*)$ of $u-\varphi$ is achieved in $(0,T)\times\mathcal{O}$ the following holds
\[\partial_t \varphi(t^*,x^*)+\mathcal{H}(t^*,x^*,u(t^*,x^*),\nabla_x \varphi(t^*,x^*),D^2_x\varphi(t^*,x^*))\geq 0.\]
(ii) We say that u is a viscosity subsolution of \eqref{visc} if for any $\varphi\in C^{1,2}((0,T)\times \mathcal{O})$ such that a point of maximum $(t^*,x^*)$ of $u-\varphi$ is achieved in $(0,T)\times\mathcal{O}$ the following holds
\[\partial_t \varphi(t^*,x^*)+\mathcal{H}(t^*,x^*,u(t^*,x^*),\nabla_x \varphi(t^*,x^*),D^2_x\varphi(t^*,x^*))\leq 0.\]
(iii) We say that $u$ is a viscosity solution of \eqref{visc} if $u$ is both a viscosity supersolution and a viscosity subsolution of \eqref{visc}.
\end{definition}
Note that in all generality, the definition of a viscosity solution can be extended beyond the continuous setting to function $u$ that are only locally bounded. One of the key features of viscosity solutions is that under some structural assumptions on the operator $\mathcal{H}$ they satisfy a comparison principle. For more details on viscosity solutions we refer to \cite{crandall1992users} for a general introduction and to \cite{touzi} for an introduction to viscosity solutions in the context of optimal control. 
\quad

\begin{lemma}
\label{lemma Z supersol b(p)}
    Assume that $(F,G,b,U_0)$ are Lipschitz. Let U be a Lipschitz solution of the master equation \eqref{MFG forward} on $[0,T)$ for $T>0$ and Z be defined as in \eqref{def Z b(p)}.\\
    Then Z is a viscosity supersolution of \eqref{Z supersolution b(p)} on $(0,T)$
\end{lemma}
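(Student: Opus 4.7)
The plan is to use the standard viscosity-solution argument based on the stochastic representation of Lipschitz solutions. Take $\varphi \in C^{1,2}$ such that $Z - \varphi$ attains a minimum at an interior point $(t^*,x^*,y^*,p^*) \in (0,T) \times \Omega \times \Omega \times \reels^m$, and (by the usual quadratic perturbation) assume that this minimum is strict and global in a small neighbourhood, so that localisation poses no difficulty. On a rich enough probability space, I introduce the coupled flow starting at $(x^*,y^*,p^*)$: let $p_s$ solve $dp_s = -b(p_s)ds + \sqrt{2\sigma}\, dW_s$ with $p_0 = p^*$, and define $X^x_s, X^y_s$ by the random ODEs $dX^x_s = -F(X^x_s, p_s, U(t^*-s, X^x_s, p_s))\, ds$ (and similarly starting from $y^*$), both driven by the \emph{same} realisation of $p_s$. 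The semigroup property of Lipschitz solutions, which follows from the definition $U = \psi(\cdot, G(\cdot,U), F(\cdot,U), U_0)$ and the Markov property of $(X_s, p_s)$, gives for $h < t^*$ the identity $U(t^*, x^*, p^*) = \mathbb{E}\left[U(t^*-h, X^x_h, p_h) + \int_0^h G(X^x_s, p_s, U(t^*-s, X^x_s, p_s))\, ds\right]$, and the analogous one starting from $y^*$.

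Subtracting the two identities yields a representation of $\Delta U^* := U(t^*,x^*,p^*) - U(t^*,y^*,p^*)$ in terms of $\Delta U_h := \Delta U(t^*-h, X^x_h, X^y_h, p_h)$ and $\int_0^h \Delta G_s\,ds$; combined with $x^*-y^* = (X^x_h - X^y_h) + \int_0^h \Delta F_s\,ds$, this allows me to expand $\mathbb{E}[Z(t^*-h, X^x_h, X^y_h, p_h)] - Z(t^*, x^*, y^*, p^*)$ in powers of $h$. The linear part $\langle \Delta U, x-y \rangle$ contributes, to leading order, $-h\langle F^x - F^y, \Delta U^* \rangle - h\langle G^x - G^y, x^*-y^* \rangle$. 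For the nonlinear part $-\beta(t)|\Delta U|^2$, I write $\Delta U_h = \Delta U^* + R_h$ and expand the square; using only $|R_h|^2 \geq 0$ together with $\beta \geq 0$ yields the crucial \emph{upper} bound $-\beta(t^*-h)\mathbb{E}[|\Delta U_h|^2] \leq -\beta(t^*-h)|\Delta U^*|^2 - 2\beta(t^*-h)\langle \Delta U^*, \mathbb{E}[R_h]\rangle$, while the subtracted semigroup identity gives $\mathbb{E}[R_h] = -\mathbb{E}[\int_0^h \Delta G_s\,ds] = -h\,(G^x-G^y) + o(h)$ at the base point. Collecting the contributions yields $\mathbb{E}[Z(t^*-h,\cdot)] - Z(t^*,\cdot) \leq h\,A + o(h)$ with $A = -\langle F^x - F^y, \Delta U^*\rangle - \langle G^x - G^y, x^*-y^*\rangle + \beta'(t^*)|\Delta U^*|^2 + 2\beta(t^*)\langle \Delta U^*, G^x - G^y\rangle$. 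This is exactly where the loss of smoothness matters: the discarded non-negative term $\beta(t^*-h)\mathbb{E}[|R_h|^2]$ would, in the smooth regime, reproduce the $2\beta\sigma|\nabla_p \Delta U|^2$ of \eqref{eq Z b(p)} which \eqref{Z supersolution b(p)} explicitly allows us to drop.

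Finally, Ito's formula applied to the smooth $\varphi$ along $(X^x_s, X^y_s, p_s)$ gives $\mathbb{E}[\varphi(t^*-h,\cdot)] - \varphi(t^*,\cdot) = h\bigl(-\partial_t\varphi - F^x\cdot\nabla_x\varphi - F^y\cdot\nabla_y\varphi - b\cdot\nabla_p\varphi + \sigma\Delta_p\varphi\bigr) + o(h)$ at $(t^*,x^*,y^*,p^*)$. Combining with the minimum property $\mathbb{E}[Z(t^*-h,\cdot)] - Z(t^*,\cdot) \geq \mathbb{E}[\varphi(t^*-h,\cdot)] - \varphi(t^*,\cdot)$, dividing by $h$ and letting $h \to 0$, I obtain $-\partial_t\varphi - F^x\cdot\nabla_x\varphi - F^y\cdot\nabla_y\varphi - b\cdot\nabla_p\varphi + \sigma\Delta_p\varphi \leq A$, which rearranges into exactly the supersolution inequality of \eqref{Z supersolution b(p)}. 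The main technical obstacles are the rigorous justification of the semigroup identity in the two-argument setting coupled by a shared noise path (handled by the Markov property of $(X,p)$ and the uniqueness of Lipschitz solutions), and the careful bookkeeping ensuring that the sign of the dropped $|R_h|^2$ term lands on the correct side of the inequality, which is what distinguishes a super- from a full solution.
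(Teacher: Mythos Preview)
Your argument is correct and follows essentially the same route as the paper: the dynamic programming identity for $U$ along the coupled characteristics $(X^x_s,X^y_s,p_s)$ driven by a common Brownian motion, an inequality for $Z$ obtained by dropping a non-negative term in the quadratic part, and then the standard viscosity test via It\^o's formula on $\varphi$. The only cosmetic differences are that the paper first records the intermediate inequality \eqref{inequalityZ} for arbitrary (stopping) times before specialising to the point of minimum, and that it uses Jensen on $|\Delta U^*|^2=|\mathbb{E}[\Delta U_h+\int_0^h\Delta G_s\,ds]|^2\le\mathbb{E}|\cdot|^2$ where you instead write $\Delta U_h=\Delta U^*+R_h$ and discard $\beta\,\mathbb{E}|R_h|^2\ge 0$; these are the same manoeuvre. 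One small point worth making explicit in your write-up: because $p$ lives in $\reels^m$, the paper localises with the stopping time $\tau_h=\inf\{s:(s,p_s)\notin[0,h]\times B(p_0,1)\}$ rather than a quadratic penalisation, which keeps all integrands bounded and makes the dominated-convergence step immediate.
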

\begin{proof}
\quad

\noindent\textit{A relation satisfied by $Z$}\\
To prove this lemma, we make extensive use of the representation formula for Lipschitz solutions. For $t< T$ $U$ is given by 
\[\left\{
\begin{array}{c}
U(t,x,p)=\displaystyle\mathbb{E}\left[\left.\int_0^{t}G(X_s,p_s,U(t-s,X_s,p_s))ds+U_0(X_t,p_t)\right|X_0=x,p_0=p\right],\\
dX_s=-F(X_s,p_s,U(t-s,X_s,p_s))dt,\\
dp_s=-b(p_s)dt+\sqrt{2\sigma}dW_s.\\
\end{array}
\right. 
\]
To simplify computations we make use of the notation 
\[G_s^{x,p}=G(X_s,p_s,U(t-s,X_s,p_s)),\]
for the process $(X_t,p_t)_{t\geq 0}$ starting in $(x,p)$. Let us first remark that $U$ satisfies a dynamic programming principle
    \begin{equation}
    \label{dpp b(p)}
    U(t,x,p_0)=\esp{U(s,X^x_{t-s},p^{p_0}_{t-s})+\int_0^{t-s} G(X^x_{u},p^{p_0}_{u},U(t-u,X^x_{u},p^{p_0}_{u}))du}.
    \end{equation}
    This follows from the definition of a Lipschitz solution. Let us introduce \[W(t,x,y,p)=\langle U(t,x,p)-U(t,y,p),x-y\rangle.\] From the previous dynamic programming principle, the following holds
\begin{align}
\label{Z beta=0 b(p)}
\nonumber &\quad \quad \quad \quad \quad \quad \quad \quad\quad \quad \quad \quad \quad \quad \quad \quad W(t,x,y,p_0)=\\
&\esp{W(s,X^x_{t-s}Y^y_{t-s},p^{p_0}_{t-s})+\int_0^{t-s}\left(\langle G_u^{x,p_0}-G_u^{y,p_0},x-y\rangle+\langle F_u^{x,p_0}-F_u^{y,p_0},U_s^{x,p_0}-U_s^{y,p_0}\rangle\right)du}.
\end{align}
Let us also remark that by using the dynamic programming principle \eqref{dpp b(p)} and Jensen inequality:
\begin{equation}
\label{martingale inequality b(p)}
|U(t,x,p_0)-U(t,y,p_0)|^2\leq \esp{\left|U_s^{x,p_0}-U_s^{y,p_0}+\int_0^{t-s}(G_u^{x,p_0}-G_u^{y,p_0})du\right|^2}.
\end{equation}
We are now ready to prove the statement.
\begin{align*}
    Z(t,x,y,p)&=W(t,x,y,p)-\beta(t)|U(t,x,p)-U(t,y,p)|^2\\
    &=\underbrace{W(t,x,y,p)}_{I}-\beta(s)\underbrace{|U(t,x,p)-U(t,y,p)|^2}_{II}-(\beta(t)-\beta(s))|U(t,x,p)-U(t,y,p)|^2 .
\end{align*}
Applying \eqref{Z beta=0 b(p)} to $I$ and \eqref{martingale inequality b(p)} to $II$, and developing the square in inequality \eqref{martingale inequality b(p)} implies that
\begin{align}
\label{inequalityZ}
    \nonumber Z(t,x,y,p_0)&\geq \esp{Z(s,X^{x}_{t-s},Y^y_{t-s},p^{p_0}_{t-s})-\beta(s)\|\int_0^{t-s}(G_u^{x,p_0}-G_u^{y,p_0})du\|^2}\\
    &+\esp{\int_0^{t-s}\left(\langle G_u^{x,p_0}-G_u^{y,p_0},x-y\rangle+\langle F_u^{x,p_0}-F_u^{y,p_0},U_s^{x,p_0}-U_s^{y,p_0}\rangle\right)du}\\
    \nonumber &-(\beta(t)-\beta(s))\|U(t,x,p_0)-U(t,y,p_0)\|^2-2\beta(s)\esp{\int_0^{t-s}\langle U_s^{x,p_0}-U_s^{y,p_0},G_u^{x,p_0}-G_u^{y,p_0}\rangle|}.
\end{align}
 Moreover, remark that such an inequality would still holds if instead of a deterministic time s we were to consider an almost surely bounded stopping time $\tau$ in $[0,t]$.

 \quad
 
 \noindent\textit{Z is a viscosity supersolution}\\
Let us now assume that for some $(t,x,y,p_0)\in(0,T)\times\Omega^2\times\reels^m$ there exists a smooth function $\varphi$ such that 
\[Z(t,x,y,p_0)-\varphi(t,x,y,p_0)=\min(Z-\varphi)=0,\]
and define the stopping time 
\[\tau_h=\inf\{s>0 \quad (s,p_{s})\notin [0,h]\times B(p_0,1)\}.\]
Where $B(p_0,1)=\{q\in\reels^m \quad \|q-p_0\|\leq 1\}$.
Because $Z(t-\tau_h,X^x_{\tau_h},Y^y_{\tau_h},p^{p_0}_{\tau_h})\geq \varphi(t-\tau_h,X^x_{\tau_h},Y^y_{\tau_h},p^{p_0}_{\tau_h})$, holds almost surely and it becomes an equality at $(t,x,y,p_0)$ it is easy to see that $\varphi$ satisfies inequality \eqref{inequalityZ} with the stopping time for any $h\leq 1$. We may then apply Ito's lemma to $\varphi$ and divide by $h$. Since $\varphi,D\varphi,D^2\varphi,F,G,b,U$ are all continuous function on $[t-1,t]\times \Omega\times B(p_0,1)$ which is compact, they are bounded on this set. By the dominated convergence theorem, we finally arrive at
\begin{align*}
&\partial_t \varphi+F^x\cdot\nabla_x \varphi+F^y\cdot \nabla_y \varphi +b(p)\cdot \nabla_p \varphi-\sigma \Delta_p \varphi\\
&\geq \langle G^x-G^y,x-y\rangle+\langle F^x-F^y,U^x-U^y\rangle-2\beta\langle G^x-G^y,U^x-U^y\rangle-\frac{d\beta}{dt}|U^x-U^y|^2 .
\end{align*}
Because this holds at all points of minimum $(t,x,y,p_0)\in (0,T)\times \Omega^2\times \reels^m$ of $Z-\varphi$ for any smooth function $\varphi$,  $Z$ is indeed a viscosity supersolution of \eqref{Z supersolution b(p)}.
\end{proof}

This slightly technical Lemma is the cornerstone that will allows us to show that monotonicity estimates are still valid for Lipschitz solution. We can now state the main result of this Section:
\begin{thm}
\label{theorem: existence b(p)}
   Assume that $(U_0,G,F,b)$ are Lipschitz in all variables, under Hypothesis \ref{hyp:monotone} there exists a unique global Lipschitz solution of \eqref{MFG forward}.
\end{thm}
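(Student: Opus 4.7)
The overall plan is to combine the local existence and extension result (Theorem \ref{lip sol b(p)}) with an a priori Lipschitz estimate derived from Lemma \ref{lemma Z supersol b(p)}. By Theorem \ref{lip sol b(p)}, there is a unique maximal Lipschitz solution $U$ on $[0, T_c)$, and uniqueness is automatic. To conclude it suffices to prove $T_c = +\infty$, which by Theorem \ref{lip sol b(p)} amounts to showing that $\|U(t,\cdot)\|_{Lip}$ stays bounded on $[0, T_c)$. Proposition \ref{b(p) U lipschitz in x implies in p} reduces this further: it is enough to establish a uniform bound on $\|D_x U(t,\cdot)\|_\infty$ over $[0,T_c)$.

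To get this bound, fix $T' < T_c$ and consider the auxiliary function $Z$ defined in \eqref{def Z b(p)} with $\beta(t)=\beta_0 e^{-\lambda_\beta t}$, where $\beta_0 < \alpha$ and $\lambda_\beta$ is to be chosen. By Lemma \ref{lemma Z supersol b(p)}, $Z$ is a viscosity supersolution of \eqref{Z supersolution b(p)}. Using Lemma \ref{equivalence alpha monotony}, Hypothesis \ref{hyp:monotone} yields the integrated form \eqref{(F,G) monotone}, which together with the identification $(u,v)=(U^x,U^y)$ gives
\[
\langle G^x-G^y,x-y\rangle+\langle F^x-F^y,U^x-U^y\rangle \geq \alpha |x-y|^2.
\]
Using the Lipschitz bound $|G^x-G^y|\leq L(|x-y|+|U^x-U^y|)$ and Young's inequality, the cross term $-2\beta\langle G^x-G^y,U^x-U^y\rangle$ is controlled by $\alpha|x-y|^2+C\beta|U^x-U^y|^2$. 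Choosing $\lambda_\beta$ sufficiently large (depending only on $\alpha$ and the Lipschitz constant of $G$), the right-hand side of \eqref{Z supersolution b(p)} becomes nonnegative. Thus $Z$ is a viscosity supersolution of the homogeneous linear operator $\partial_t-\sigma\Delta_p+b\cdot\nabla_p+F^x\cdot\nabla_x+F^y\cdot\nabla_y$ on $(0,T')\times\Omega^2\times\reels^m$.

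Next apply a viscosity comparison principle against the constant function $0$. At $t=0$, the choice $\beta_0\leq\alpha$ together with \eqref{U0 monotone} gives $Z(0,\cdot)\geq 0$. On the spatial boundary $\partial\Omega$ in the $x$ or $y$ variable, the condition $\eta\cdot F\geq 0$ ensures that the characteristics $\dot X=-F^x$ and $\dot Y=-F^y$ enter the domain, so no boundary data are required. Since $U$ is Lipschitz in $(x,p)$ uniformly on $[0,T']$ and $\Omega$ is bounded, $Z$ is bounded on $[0,T']\times\Omega^2\times\reels^m$, which allows the standard viscosity comparison principle to apply on this unbounded-in-$p$ domain (the argument can be made rigorous by penalizing at infinity in $p$ or by applying the stochastic representation/Feynman--Kac formula in the viscosity sense). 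We conclude $Z\geq 0$ on $[0,T']\times\Omega^2\times\reels^m$.

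Finally, $Z(t,x,y,p)\geq 0$ together with Cauchy--Schwarz yields
\[
\beta(t)|U(t,x,p)-U(t,y,p)|^2\leq \langle U(t,x,p)-U(t,y,p),x-y\rangle \leq |U(t,x,p)-U(t,y,p)|\,|x-y|,
\]
hence $\|D_x U(t,\cdot)\|_\infty\leq \beta(t)^{-1}=\beta_0^{-1}e^{\lambda_\beta t}$. This bound is uniform on $[0,T_c)$ whenever $T_c<\infty$; combined with Proposition \ref{b(p) U lipschitz in x implies in p}, it contradicts the blow-up alternative of Theorem \ref{lip sol b(p)}. Therefore $T_c=+\infty$ and we obtain the unique global Lipschitz solution. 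The main technical obstacle is the comparison step: while the supersolution inequality and the sign of the boundary flux are clean, the unboundedness in $p$ and the fact that Lemma \ref{lemma Z supersol b(p)} gives only a viscosity (not classical) supersolution require care, and this is where most of the rigor needs to be invested.
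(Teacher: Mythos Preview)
Your proposal is correct and follows essentially the same route as the paper: reduce global existence to an a priori bound on $\|D_xU\|_\infty$ via Theorem \ref{lip sol b(p)} and Proposition \ref{b(p) U lipschitz in x implies in p}, then use Lemma \ref{lemma Z supersol b(p)} together with the integrated monotonicity \eqref{(F,G) monotone} and a suitable choice of $\beta(t)=\beta_0 e^{-\lambda_\beta t}$ to make $Z$ a viscosity supersolution of the homogeneous linear equation, and finally apply a comparison principle to get $Z\geq 0$ and hence $\|D_xU(t,\cdot)\|_\infty\leq \beta(t)^{-1}$. The paper disposes of the comparison step you flag as the technical obstacle by invoking a ready-made result (Theorem 2.5 of Da Lio--Ley), which indeed covers the unbounded-in-$p$ domain with bounded data and a viscosity supersolution; your penalization/Feynman--Kac sketch is an acceptable alternative justification of the same step.
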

\begin{proof}
Because we already got an estimate on $\|D_pU\|_\infty$ in terms of $\|D_xU\|_\infty$ we need only to show that $\|D_xU\|_\infty$ does not blow up in finite time. All functions $(U_0,G,F,b)$ are Lipschitz, hence there exists a time $T>0$ and an associated Lipschitz solution $U$ to \eqref{MFG forward} on $[0,T)$. Let $Z$ be defined as in \eqref{def Z b(p)}, we know thanks to Lemma \ref{lemma Z supersol b(p)} that on any time interval $[0,\tau],\quad \tau<T$, $Z$ is a viscosity supersolution of \eqref{Z supersolution b(p)}. Now, using assumption \eqref{(F,G) monotone} on the monotonicity of $(G,F)$, we get that for any $(t,x,y,p)$
\begin{align*}
&\langle G^x-G^y,x-y\rangle+\langle F^x-F^y,U^x-U^y\rangle-2\beta\langle G^x-G^y,U^x-U^y\rangle-\frac{d\beta}{dt}|U^x-U^y|^2\\
&\geq (\alpha-\|D_xG\|_\infty^2\beta) |x-y|^2-(\beta \|D_uG\|_\infty+\frac{d\beta}{dt})|U^x-U^y|^2.
\end{align*}
 Fixing $\beta_0\leq \alpha$ and $\beta(t)=\beta_0e^{-\lambda t}$ with $\lambda\geq \|D_uG\|_\infty$, we finally get that $Z$ is a viscosity supersolution of 
 \begin{equation}
 \label{Z gamma supersol b(p)}
\partial_t Z+F^x\cdot\nabla_x Z+F^y\cdot \nabla_y Z +b(p)\cdot \nabla_p Z-\sigma \Delta_p Z\geq 0.
 \end{equation}
Since $\beta_0$ was choosen smaller than $\alpha$, our assumption on $U_0$ also ensure that
\[Z_{t=0}\geq 0.\]
We may now conclude by a comparison principle that Z stay positive over the course of time for any time interval $[0,\tau]$ with $\tau<T$. We can apply Theorem 2.5 of \cite{34bed8a7-6223-3f55-b20d-c7daa21c8c15} on such time interval which implies:
\[\underset{[0,\tau]}{\inf} Z\geq 0.\]
It only remains to show that the non-negativity of $Z$ implies an estimate on its Lipschitz bound in $x$. Take $t<T, (x,y,p) \in \Omega^2\times \R^m$, from the non-negativity of $Z$
\begin{align*}
     |U(t,x,p)-U(t,y,p)\|x-y|\geq \beta(t) |U(t,x,p)-U(t,y,p)|^2.
\end{align*}
We then deduce \[\frac{1}{\beta(t)}\geq \frac{|U(t,x,p)-U(t,y,p)|}{|x-y|}.\]
Taking the supremum over $(x,y,p)$, we finally get
\[\forall t<T, \quad \|D_xU(t,\cdot)\|_\infty \leq \frac{1}{\beta(t)}.\]
Since we have an uniform bound on the Lipschitz norm of $U$ in $x$, it is then easy to see that the blow up time $T$ of our Lipschitz solution $U$ must satisfies $T=+\infty$ to avoid any contradiction, hence the existence of a global Lipschitz solution of the problem.  
 \end{proof}

\section{Noise depending on the state of the game}\label{sec:non-autonomous}
In this Section we investigate the more intricate case of the master equation 
\begin{equation}
\label{MFG b(x,p,u)}
\left\{
\begin{array}{c}
\partial_t U+F(x,p,U)\cdot \nabla_x U+b(x,p,U)\cdot \nabla_p U-\sigma \Delta_p U=G(x,p,U) \text{ for }  t\in(0,T), x \in \Omega,p \in\R^m, \\
U(0,x,p)=U_0(x,p) \text{ for } (x,p)\in \Omega\times \reels^m.
\end{array}
\right. 
\end{equation}
The only difference with \eqref{MFG forward} is that now we allow $b$, the drift of the stochastic process $p$, to depend both on $x$ and the value $U$ itself. We keep up with the assumption we made on $\Omega \subset \R^d$.
\subsection{Preliminary remarks}
\subsubsection{Modeling}

In our opinion, the fact that $b$ can depend on $(x,U)$ is natural in a wide variety of applications. Indeed, consider for instance the case in which $p$ models the price of a stock. In such a situation, it seems fair for its evolution to depend on the distribution of players $x$ and their decisions (captured by $U$ itself in this finite states setting). In general, common noise models exogenous or environmental factors. We believe realistic to assume that in certain contexts, the players have an effect on their environment! 

Let us insist that at the moment, there seems to have been remarkably few interest in the systematic study of PDE arising from this kind of model. In fact to the extent of our knowledge the only preexisting results were given on Forward backward stochastic differential equations (FBSDE) rather than on the associated partial differential equation \cite{doi:10.1137/S0363012996313549}. 

\subsubsection{Necessity of new estimates}
In this setting, the definition of what is a Lipschitz solution has to be slightly changed to account for non linearity introduced by the dependency of $b$ in $U$. Indeed we now define $\tilde{\psi}$ for three vector fields $(A,B^1,B^2)$:
\[\left\{
\begin{array}{c}
 \displaystyle\tilde{\psi}(T,A,B^1,B^2,U_0)=V,\\
 \text{ where } \forall t\in[0,T), (x,p)\in\Omega\times\reels^m \quad V(t,x,p)=\displaystyle\mathbb{E}\left[\left.\int_0^{t}A(t-s,X_s,p_s)ds+U_0(X_t,p_t)\right|X_0=x,p_0=p\right],\\
dX_s=-B^1(t-s,X_s,p_s)dt,\\
dp_s=-B^2(t-s,X_s,p_s)dt+\sqrt{2\sigma}dW_s.\\
\end{array}
\right.
\]
and we define a Lipschitz solution as a fixed point of this new functional. 
\begin{definition}
Let $T>0$, $U:[0,T)\times \reels^d\times\reels^m\to \reels^d$ is said to be a Lipschitz solution of \eqref{MFG b(x,p,u)} if :
\begin{itemize}
    \item[-] U is Lipschitz in $(x,p)$ uniformly in $t\in[0,\alpha]$ for all $\alpha$ in $[0,T)$
    \item[-] for all $t<T$: \[U=\tilde{\psi}(t,G(\cdot,U),F(\cdot,U),b(\cdot,U),U_0)\]
\end{itemize}
\end{definition}
Previous results of local existence for Lipschitz solutions still hold in this more involved case (such as in \cite{bertucci2023lipschitz}, Theorem 2.4). We want to stress that global gradient estimates obtained in the autonomous case do not hold in general in this new model. While we expect that we could still get estimates on the gradient in $p$ in function of the gradient in $x$ using parabolic regularity, it is not so clear that $D_xU$ does not blow up in finite time.

\paragraph{Difficulties arising from the dependency of b on U}
\quad

Before giving an example of an explosion of $D_xU$, we state a Lemma on solutions of the master equation which are invertible in $x$. This is a natural extension of a result presented by P.L. Lions during his lectures \cite{P_L_Lions} to the master equation we are studying. 
\begin{lemma}
\label{invertible solution}
    Let $U$ be a regular solution of 

    \begin{equation*}
\left\{
\begin{array}{c}
     \partial_t U+F(x,p,U)\cdot \nabla_x U+b(x,p,U)\cdot \nabla_p U-\sigma \Delta_p  U=G(x,p,U) \quad (t,x,p)\in (0,T)\times\reels^d\times\reels^{m},\\
     U(0,x,p)=U_0(x,p).\\
\end{array}
\right.
\end{equation*}
 Assume that for all $(t,p)$ $U$ is invertible in $x$ and that $V$, its inverse in $x$, is smooth in all variables. Then $V$ is a solution of
    \begin{equation*}
\left\{
\begin{array}{c}
     \partial_t V+G(V,p,y)\cdot \nabla_y V+b(V,p,y)\cdot \nabla_p V-\sigma \Delta_p V=F(V,p,y) \quad (t,y,p)\in (0,T)\times\reels^d\times\reels^{m},\\
     V(0,y,p)=U_0^{-1}(y,p).\\
\end{array}
\right.
\end{equation*}
with the misuse of the notation $U_0^{-1}(y,p)=\left(U_0(\cdot,p)\right)^{-1}(y)$.
\end{lemma}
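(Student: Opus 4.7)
My plan is to prove this by direct computation based on the inverse function theorem. Starting from the defining identity $V(t,U(t,x,p),p)=x$, I would differentiate in each of the arguments $t$, $x_k$ and $p_\ell$ (to first and to second order in $p$), obtaining relations between the derivatives of $V$, evaluated at $(t,y,p)$, and those of $U$, evaluated at the matching base point $(t,V(t,y,p),p)$. Differentiation in $x$ gives the Jacobian inversion $(\nabla_y V)(\nabla_x U)=I$; differentiation in $t$ gives $\partial_t V=-(\nabla_y V)\,\partial_t U$; differentiation in $p_\ell$ gives $\nabla_p V=-(\nabla_y V)\,\nabla_p U$; and differentiation once more in $p_\ell$ followed by summation yields a relation between $\Delta_p V$, $(\nabla_y V)\Delta_p U$, and cross-terms involving $D^2_{yy}V$, $D^2_{yp}V$ and $\nabla_p U$.

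The next step is to isolate $\partial_t U$ from the master equation $\partial_t U+F\cdot\nabla_xU+b\cdot\nabla_pU-\sigma\Delta_pU=G$ and substitute it into the identity $\partial_t V=-(\nabla_y V)\partial_tU$, simplifying each piece with the identities above. By Jacobian inversion, $(\nabla_y V)(F\cdot\nabla_xU)=F$; by the first-order $p$-identity, $(\nabla_y V)(b\cdot\nabla_pU)=-b\cdot\nabla_pV$; and $(\nabla_y V)G$ is exactly $G\cdot\nabla_y V$ read as a directional derivative. At this stage all arguments of $F,G,b$ have become $(V(t,y,p),p,y)$, as desired. Collecting these contributions one arrives at
\[ \partial_t V+(G\cdot\nabla_y V)+(b\cdot\nabla_p V)-\sigma(\nabla_y V)\Delta_p U=F, \]
so the claimed equation will follow once $(\nabla_y V)\Delta_p U$ is rewritten as $\Delta_p V$ by invoking the second-order relation from the first paragraph.

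I expect this last step to be the main obstacle. The identity produced by differentiating $V(t,U,p)=x$ twice in $p$ has the schematic form $\Delta_p V+(\nabla_y V)\Delta_p U+R=0$, where $R$ gathers cross-terms $D^2_{yp}V\cdot \nabla_p U$ and $D^2_{yy}V\cdot(\nabla_pU)^{\otimes 2}$ coming from the nonlinear chain rule when $U$ itself depends on $p$. The delicate point of the proof is to reconcile these correction terms with the target equation, and I would handle them by going back and forth between the identity $V(t,U,p)=x$ and its dual $U(t,V,p)=y$ in order to eliminate the cross-terms. Everything else in the proof is just first-order chain-rule bookkeeping.

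Finally, the initial condition is essentially free: evaluating the identity at $t=0$ gives $V(0,U_0(x,p),p)=x$, which says that $V(0,\cdot,p)$ is the inverse of $U_0(\cdot,p)$, i.e.\ $V(0,y,p)=U_0^{-1}(y,p)$ in the notation of the lemma.
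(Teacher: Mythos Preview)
Your approach matches the paper's: differentiate $V^i(t,U(t,x,p),p)=x^i$ in each variable, substitute the master equation for $\partial_t U$, and simplify term by term via the chain-rule relations. Two compensating sign slips in your bookkeeping: after substituting $\partial_t U=G-F\cdot\nabla_x U-b\cdot\nabla_p U+\sigma\Delta_p U$ into $\partial_t V=-(\nabla_y V)\partial_t U$, the Laplacian contribution enters as $+\sigma(\nabla_y V)\Delta_p U$, not $-\sigma$, and the second-order identity you want is correspondingly $(\nabla_y V)\Delta_p U=-\Delta_p V$ rather than $+\Delta_p V$ (consistent with your own schematic $\Delta_p V+(\nabla_y V)\Delta_p U+R=0$). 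For the cross-terms $R$, the paper does not invoke the dual relation $U(t,V,p)=y$; it rewrites the remainder in the form $-\partial_{p_k}U^j\,\partial_{y_j}\!\left(\partial_{p_k}V^i+\partial_{y_l}V^i\,\partial_{p_k}U^l\right)$ and then appeals directly to the first-order $p$-identity $\partial_{p_k}V^i+\partial_{y_l}V^i\,\partial_{p_k}U^l=0$ to make it vanish.
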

\begin{proof}
Let $1\leq i \leq d$ and $t<T$, we know that:
\[V^i(t,U(t,x,p),p)=x^i.\]
As a consequence:
\begin{align}
\label{deriv p inverse}
    \nonumber 0&=\frac{d}{d p_k}V^i(t,U(t,x,p),p),\\
    0&=\partial_{p_k} V^i+\partial_{y_j}V^i\partial_{p_k}U^j,
\end{align}
where we used Einstein notation for summation. Similarly,
\begin{align*}
0&=\frac{d}{dt}V^i(t,U(t,x,p),p),\\
&=\partial_t V^i+\partial_{y_j} V^i \partial_t U^j.
\end{align*}
Plugging the equation satisfied by $U$ in this last equality leads to
\begin{align*}
0=\partial_t V^i+\partial_{y_j} V^i \left(G^j-F^k\partial_{x_k}U^j-b^k\partial_{p_k}U^j+\sigma\sum_k \partial^2_{p_k}U^j.\right).
\end{align*}
Defining $y=U(t,x,p)$ and $x=V(t,y,p)$ gives
\[\partial_{y_j}V^i(t,U(t,x,p),p)G^j(x,p,U(t,x,p))=G(V,p,y)\cdot \nabla_y V^i.\]
Because $\partial_{y_j}V^i \partial_{x_k} U^j=\delta_{ik}$ for the Kronecker delta symbol,
\[F^k\partial_{y_j}V^i \partial_{x_k} U^j=F^i(x,p,U(t,x,p))=F^i(V(t,y,p),p,y).\]
For the last term involving $\sigma$, 
\begin{align*}
 \sigma \sum_k \partial^2_{p_k}U^j \partial_{y_j} V^i&=\sigma\sum_k \partial_{p_k}\left(\partial_{p_k}U^j\partial_{y_j} V^i\right)-\partial_{p_k}U^j\partial^2_{p_k,y_j}V^i-\partial_{p_k}U^j\partial^2_{y_j,y_l} V^i \partial_{p_k} U^l\\
 &=\sigma\sum_k \partial_{p_k}\left(\partial_{p_k}U^j\partial_{y_j} V^i\right)-\partial_{p_k}U^j\partial_{y_j}\left(\partial_{p_k}V^i+\partial_{y_l} V^i \partial_{p_k} U^l\right).
\end{align*}
Making use of \eqref{deriv p inverse} we can write
\[\sigma \sum_k \partial^2_{p_k}U^j \partial_{y_j} V^i=-\sigma\sum_k \partial^2_{p_k} V^i.\]
Finally, we get
\[\partial_t V^i+G(V,p,y)\cdot \nabla_y V^i-F^i+b(V,p,y)\cdot\nabla_p V^i-\sigma \Delta_p V^i=0.\]
\end{proof}
\begin{remarque}
While we decided to state this result on $\reels^d$, in the proof we only use the fact that $U$ is a one to one function. Taking an equation on the whole space is a choice of convenience as otherwise the domain of $y\to V(t,y,p)$ depends on $(t,p)$.
\end{remarque}
The previous Lemma yields an analogy between the dependence of $b$ on $x$ and on $U$. Before giving an example of explosion in the case in which $b$ depends on $x$, we present an example in which we highlight this formal change of variable. 
\begin{exemple}
\label{b(U) inversible}
Consider the equation\begin{align*}
    \begin{array}{c}
      \partial_t U+F(U)\cdot \nabla_x U+b(U)\cdot \nabla_p U-\sigma \Delta_p U = 0 \quad (t,x,p)\in (0,T)\times\reels^d\times\reels^{m}, \\
      U(0,x,p)=U_0(x,p) \quad x\in \reels^d, p\in \reels^{m} .
    \end{array}
\end{align*}
For $U_0$ invertible in $x$, we note $V$ the inverse of $U$ in $x$. Following Lemma \ref{invertible solution} we expect V to be a solution of:
\begin{align*}
    \begin{array}{c}
      \partial_t V+b(y)\cdot \nabla_p U-\sigma \Delta_p U = F(y) \quad (t,y,p)\in (0,T)\times\reels^d\times\reels^{m} \\
      V(0,y,p)=U^{-1}_0(y,p) \quad y\in\reels^d, p\in \reels^{m},
    \end{array}
\end{align*}
when it is well defined. From which we can deduce the expression of V by integrating along the characteristics
\[V(t,y,p)=\displaystyle\mathbb{E}\left[tF(y)+V_0(y,p-tb(y)+\sqrt{2\sigma}W_t)\right].\]
It then follows that
\begin{align*}
 D_yV(t,y,p)&=\displaystyle\mathbb{E}\left[tD_uF(y)+D_yV_0(y,p-tb(y)+\sqrt{2\sigma}W_t)-tD_pV_0(y,p-tb(y)+\sqrt{2\sigma}W_t) D_u b\right]\\
&=\displaystyle\mathbb{E}\left[(D_xU_0)^{-1}\left(I_d+t(D_xU_0 D_uF(y)+D_pU_0 D_u b)\right)(y,p-tb(y)+\sqrt{2\sigma}W_t)\right],
\end{align*}
where the last line was obtained using the fact that $U$ and $V$ are inverse of each others,
\[D_p (U\circ V)=D_pU(V)+D_xU(V)D_pV=0.\]
If there exists a point $(t^*,y^*,p^*)$ such that $D_yV(t^*,y^*,p^*)$ is not invertible, then we expect that 
\[\underset{(t,y,p)\to(t^*,y^*;p^*)}{\lim} \|D_xU(t,V(t,y,p),p)\|\to +\infty.\]
In the case $\sigma = 0$, a sufficient condition to avoid such an explosion could be
\[\forall (x,p)\in \reels^d\times \reels^m \quad D_xU_0(x,p)D_uF(U_0(x,p))+D_pU_0(x,p)D_ub(U_0(x,p))\geq 0.\]
This clearly imposes structural conditions on $b$ and $U_0$.
\end{exemple}

\paragraph{Difficulties arising from the dependency of b on x}
It is quite natural that further assumptions and restrictions be required when b depends on $U$. However, the dependence of $b$ in $x$, even if it does not add any additional non-linearity in the problem, can lead to the apparition of singularities in finite time. We now give such an example.
\begin{exemple}
\label{b(x) lineaire}
Let us a consider a toy model in dimension one with 
\[
\left\{
\begin{array}{c}
     F(x,p,U)=U,  \\
     G(x,p,U)=x,\\
     b(x,p)=\lambda x,\\
     U_0(x,p)=\alpha_0 x+\beta_0 p.
\end{array}
\right.
\]
Looking for a linear solution $U(t,x,p)=\alpha(t) x+\beta(t) p$ of 
\[ \partial_t U+U\partial_x U+\lambda x \partial_p U-\sigma \partial^2_p U=x \text{ in } (0,\infty)\times \R^2,\]
we get
\[
\left\{
\begin{array}{c}
    \frac{d\alpha}{dt}(t)+\alpha(t)^2+\lambda \beta(t)=1,  \\
    \frac{d\beta}{dt}(t)+\alpha(t)\beta(t)=0,\\
    \alpha(0)=\alpha_0,\\
    \beta(0)=\beta_0.
\end{array}
\right
.
\]
We may easily solve for $\beta$ assuming $\alpha$ is well behaved
\[\beta(t)=\beta_0e^{-\int_0^t \alpha(s)ds}.\]
Hence $\alpha$ is solution to
\[
\left\{
\begin{array}{c}
    \frac{d\alpha}{dt}(t)+\alpha(t)^2+\lambda \beta_0e^{-\int_0^t \alpha(s)ds}=1,  \\
    \alpha(0)=\alpha_0.
\end{array}
\right.
\]
Let us now take $\lambda \beta_0\geq 1$.
Assume that $\alpha_0<0$ at time 0, then
\[\frac{d\alpha}{dt}(0)\leq -\alpha^2(0)<0.\]
As a consequence $\alpha$ must be decreasing in a neighbourhood of zero and we deduce that on this set
\[\frac{d\alpha}{dt}(t)\leq -\alpha^2(t)<0.\]
Because this leads to $\alpha$ further decreasing, we deduce that the inequality must be true for all times and hence that there exist $T<+\infty$ such that  $\underset{t\to T}{\lim } \alpha(t)=-\infty$. This was to be expected as in this very simple case the monotonicity condition on $U_0$ reduces to $\alpha_0\geq 0$. 

Let us now assume that $\alpha_0>0$ so that $U_0$ is strongly monotone in $x$, it only remains to show that for a well chosen pair $(\lambda,\beta_0)$ there exists a time t such that $\alpha(t)<0$, using the previous result this will then be enough to ensure $\alpha$ blows up in finite time. Take a couple $(\lambda,\beta_0)$ satisfying
\[1-\lambda\beta_0e^{-\alpha_0 (\alpha_0+1)}\leq -1. \]
For this choice, $\alpha$ must be decreasing around 0 and by a similar argument as the one used earlier we deduce that
\[\forall t\in [0,\alpha_0+1],\frac{d\alpha}{dt}(t)\leq -1.\]
Hence $\alpha$ becomes negative in finite time.

Since $\partial_x U(t,x,p)=\alpha(t)$ this is indeed a blow up of the gradient of $U$ in finite time. This shows that despite the regularizing effects from the (strong in this case) monotonicity\footnote{Indeed, it was shown in \cite{bertucci2021master} that in the absence of $b$, such an equation regularizes the initial condition.} of both $F$ and $G$, the dependency of $b$ in $x$ might be enough to perturb the well-posedness of the equation, even though it did not add further non linearity. 
Let us also remark the regularizing effect the monotonicity of $b$ in $p$ would have on the equation. In this linear setting this translates as
\[b(x,p)=\lambda x+\mu p \quad \mu>0.\]
Hence $\beta$ would become:
\[\beta(t)=\beta_0e^{-\mu t-\int_0^t \alpha(s)ds}.\]
Adding the term in $e^{-\mu t}$ tends to make $\beta(t)$ smaller in absolute value (though it might not be enough to compete with the term in $\alpha$).
\end{exemple}
\subsection{A heuristic on the mean field game master equation}
As we just saw, the estimates of Section 2 are in general not true anymore when $b$ depends on $(x,U)$. However under additional assumptions, we may recover an estimate stemming from a similar idea to the one that led to Proposition \ref{prop monotonicity}. As the proof is somewhat calculation intensive (although very similar to the one in the case where $b$ depends only on $p$), we present first an heuristic that shows from where both the result, and the hypothesis which might not seems very intuitive at first glance, come from.

\quad

Let us first take a look at first order Hamilton Jacobi Bellman (HJB) equations.
Let $\varphi$ be a solution of:
\begin{equation*}
\left\{
\begin{array}{c}
     \partial_t \varphi+H(x,\nabla_x \varphi)=0 \text{ in } (0,\infty)\times \R^d,\\
     \varphi|_{t=0}=\varphi_0.\\
\end{array}
\right.
\end{equation*}
If we fix $U=\nabla_x \varphi$, then we know that $U$ is solution of
\begin{equation} \partial_t U+F(x,U)\cdot \nabla_x U= G(x,U)\text { in } (0,\infty)\times \R^d,\end{equation}
for $F(x,u)=D_u H(x,u)$ and $G(x,u)=-D_xH(x,u)$.  In particular this shows that the propagation of monotonicity (and as such of regularity) on U is equivalent to the propagation of convexity on $\varphi$. In general, we don't have to consider a potential game (ie the gradient of a HJB equation) so long as the equations keeps the same structure. Let us also mention that equation \eqref{MFG forward} falls into this category of equation as we can easily see it by taking the gradient in $x$ of the equation
\begin{equation*}
\left\{
\begin{array}{c}
     \partial_t \varphi+H(x,p,\nabla_x \varphi)+b(p)\cdot\nabla_p \varphi -\sigma \Delta_p \varphi=0, \\
     \varphi|_{t=0}=\varphi_0.\\
\end{array}
\right.
\end{equation*}
This explains why estimates previously obtained without the variable $p$ still held true in this setting: the fundamental structure of the equations was not changed. However it is easy to see that such considerations do not hold anymore when $b$ depends on more than just $p$. In this case, we would see a term in $\nabla_p\varphi$ come out in the equation satisfied by $\nabla_x \varphi$ which cannot be expressed in term of this last function alone. We remark, nonetheless, that we would obtain an equation with the same structure if we were to look at the one satisfied by $(\nabla_x\varphi,\nabla_p \varphi)$. With this insight, we may try to "complete" equation \eqref{MFG b(x,p,u)} by another. Let $V$ be a solution of
\begin{equation}
\label{equation V}
\left\{
\begin{array}{c} \\
     \partial_t V+F(x,p,U)\cdot\nabla_x V+b(x,p,U)\cdot\nabla_p V-\sigma\Delta_p V=Q(x,p,U,V) \text{ in }(0,\infty)\times \reels^d\times \reels^m\\
     V|_{t=0}=V_0(x,p),
\end{array}
\right. 
\end{equation}
where $Q$ and $V_0$ are $\R^m$ valued Lipschitz functions.
Considering the equation satisfied by $\Bar{U}=(U,V)$ we get
\[\partial_t \Bar{U}+\Bar{F}(X,\Bar{U})\cdot \nabla_X \Bar{U}-\sigma \Delta_p \Bar{U}=\Bar{G}(X,\Bar{U}),\]
with $X=(x,p)$, $\Bar{F}(X,\Bar{U})=(F,b)(x,p,U)$ and $\Bar{G}(X,\Bar{U})=(G,Q)(x,p,U,V)$.\\
Following the proof of Proposition \ref{prop monotonicity} we expect that under the assumptions
\begin{equation}
\label{V0 monotone}
\langle \bar{U}_0(X)-\bar{U}_0(Y),X-Y\rangle\geq \alpha|\bar{U}_0(X)-\bar{U}_0(X)|^2
\end{equation}
\begin{align}
\label{V monotone fgb}
\nonumber \langle \Bar{G}(X,\Bar{U}(t,X))-\Bar{G}(Y,\Bar{U}(t,Y)),X-Y\rangle+&\langle \Bar{F}(X,\Bar{U}(t,X))-\Bar{G}(Y,\Bar{U}(t,Y)),\bar{U}(t,X)-\bar{U}(t,Y)\rangle\\
&\geq \alpha |X-Y|^2,
\end{align}
we will be able to recover global Lipschitz estimates from monotonicity on the couple $(U,V)$ and hence on $U$.
Let us now discuss a bit on the choice of the function $V$ to "complete" $U$. So far we have stayed rather ambiguous on whether $(V_0,Q)$ are fixed and $V$ is to be considered as an additional unknown or $V$ is a chosen smooth function and $(V_0,Q)$ are defined through \eqref{equation V}. We claim that both approach are possible. However because condition \eqref{V monotone fgb} is also a condition on $V$, it is hard to keep track of it without some a priori knowledge on $V$. This is why the second approach is much more practical. Condition \eqref{V0 monotone} implies that $V_0$ must be monotonous in $p$, the simplest choice of such a function $V$ is to take:
\[V(t,x,p)=V_0(x,p)=Ap, \]
with $A\in \mathcal{M}_m(\reels)$ a positive definite matrix. Defining $Q$ through \eqref{equation V} we deduce that 
\[Q(x,p,U)=Ab(x,p,U).\]
In light of this fact and the linearity of $V$, Conditions \eqref{V0 monotone} and \eqref{V monotone fgb} reduce to
\begin{hyp}\label{hyp:b(x,p,U)}
There exist a matrix $A\in\mathcal{M}_m(\reels)$ and $\alpha>0$ such that for all $x,y$ in $\Omega$, $p,q$ in $\reels^m$ and $u,v$ in $\reels^d$:
\begin{align}
\label{condition U_0 V lineaire }\langle U_0(x,p)-U_0(y,q),x-y\rangle+&\langle p-q,A(p-q)\rangle\geq \alpha|U_0(x,p)-U_0(y,q)|^2,\\
\nonumber\\
\nonumber\langle G(x,p,u)-G(y,q,v),x-y\rangle+\langle F(x,&p,u)-F(x,q,v),u-v\rangle+\langle b(x,p,u)-b(y,q,v),(A+A^T)(p-q)\rangle \\
\label{(G,F,Ab)}    &  \geq \alpha (|x-y|^2+|p-q|^2).
\end{align}
\end{hyp}
Because both conditions can be expressed only in terms of $A+A^T$, we may assume without loss of generality in the rest of the paper that $A$ is a symmetric matrix. The condition \eqref{(G,F,Ab)} is then simply a monotonicity condition on $(G,F,Ab)$. If there exists a matrix $A$ such that both conditions hold, we expect that we are able to get estimates on $U$. Let us insist on the fact that both conditions on $U_0$ and $A$ are not trivial. For example \eqref{condition U_0 V lineaire } implies that the gradient of $U$ in $p$ must vanish when the gradient in $x$ is itself 0. Of course, if we have a strong non-degeneracy condition on $D_xU_0$, say for instance, uniform coercivity in $(x,p)$ then we may always find a suitable $A$. Note that for potential games, this means $U_0$ is the gradient of a strongly convex function in $x$ uniformly in $p$. We can also give the following example that do not require coercivity of $D_xU_0$.
\begin{exemple}
Let $\Psi:\reels^d\to \reels^d$ be such that
\[\exists \alpha>0 \quad \forall (x,y)\in \reels^{2d} \quad \langle \Psi(x)-\Psi(y),x-y\rangle \geq \alpha |\Psi(x)-\Psi(y)|^2.\]
 and $f:\reels^m\to \reels^d$ be Lipschitz. Then there exists $A\in \mathcal{M}_m(\reels)$ such that 
 \[U_0(x,p)=\Psi(x+f(p))\] satisfies the condition \eqref{condition U_0 V lineaire }.
\end{exemple}
It does becomes much simpler though, if we assume that $U_0$ is not a function of $p$, as the condition on $(U_0,A)$ then reduces to the $\alpha-$monotonicity of $U_0$ in $x$, and because we can choose any matrix $A$ positive semi-definite, the joint monotony of $(G,F,b)$ become a sufficient condition to propagate estimates on U. While this was a fairly obvious observation, we still present it as we believe that it is natural in some applications to have $U_0(x,p)\equiv U_0(x)$ (remember that $p$ models noise). 
Nevertheless, we don't believe this monotonicity condition on $b$ to be unreasonable. Consider the following example in dimension 1

\begin{exemple}
    Let \[b(x,p)=(r_0+\frac{r(x)}{1+p})p\text{ for }(x,p)\in\Omega\times \reels^+,\]
    with $r_0>0$ and $\underset{x}{\inf} (r_0+r(x))=\alpha>0$ and assume $(F,G)$ satisfy Hypothesis \ref{hyp:monotone}.
    Then if we assume either that $\|D_pF\|_\infty$ and  $\|D_p G\|_\infty$ or $\|r\|_{Lip}$ are sufficiently small, there exist an $A\in \reels^+$ such that the condition \eqref{(G,F,Ab)} is fulfilled.
\end{exemple}
This is an interesting example as it allows us to consider SDEs of the form 
\[dp_t=p_t(r_tdt+\sigma dW_t),\]
which may be used to model market prices or other positive quantities of interest. In this example $r_0$ could model a flat rate for inflation while $r(x)$ correspond to the dependence of the asset price on the distribution of the players $x$ and the term $\frac{1}{1+p}$ accounts for inertia of the price. The condition on $r+r_0$ can then be understood as the fact that the market tends to drive up the price of the asset which is arguably a strong assumption.  In this particular case a value function can be defined for $p$ in $\reels^+$ only without additional boundary condition as $0$ is an absorbing point for the process $(p_t)$. Of course since the Brownian term depends on $p$ this model does not quite correspond to equation \eqref{MFG b(x,p,u)}. Let us state that for $\sigma$ sufficiently small, this is of no concern as we delay the analysis of this more convoluted case to section \ref{section: vol}
\color{black}
\subsection{Gradient estimate }

Following the heuristic we just described, we state and prove the following gradient estimate. As in the previous Section, we are not going to use directly this estimate as we shall work with Lipschitz solutions. We still present it so that the link with the autonomous case is made more transparent.
\begin{prop}
\label{prop:estimate}
Let $U$ be a smooth solution of the master equation \eqref{MFG b(x,p,u)} on $(0,T)$, uniformly Lipschitz continuous in $(x,p)$. Assume that $(G,F,b)$ is smooth, Lipschitz in all the variables and satisfies \eqref{(G,F,Ab)}. Further assume that $U_0$ is such that:
\begin{equation}
\label{U0 V lineaire diff}
\exists A\in\mathcal{M}_m(\reels),\alpha>0,\quad \left(\begin{array}{c}\xi_1\\ \xi_2\end{array}\right)^T\left(\begin{array}{cc} D_xU_0\quad D_p U_0\\ 0\quad A\\\end{array}\right)\left(\begin{array}{c}\xi_1\\ \xi_2\end{array}\right)\geq \alpha |D_x U_0\cdot \xi_1|^2, \end{equation}
Then U is monotone in x on $[0,T)$ and there exists constants $C_x,C_p$ depending on $G,F,b,U_0$ and $T$ only such that:
\[\forall t<T \quad \|D_xU(t,\cdot)\|_\infty\leq C_x, \|D_pU(t,\cdot)\|_\infty\leq C_p.\]
\end{prop}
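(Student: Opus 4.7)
The plan is to reduce this proposition to Proposition \ref{prop monotonicity} via the augmentation already suggested in the heuristic above: we complete $U$ by the linear auxiliary function $V(t,x,p) = Ap$ and propagate monotonicity on the augmented unknown $\bar U = (U, V)$ taking values in $\R^{d+m}$.

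\textbf{Augmented equation.} Since $V$ is linear in $p$ and independent of $(t, x)$, we have $\partial_t V = 0$, $\nabla_x V = 0$, $\nabla_p V = A$ and $\Delta_p V = 0$, so a direct substitution shows that $\bar U$ solves
\[
\partial_t \bar U + \bar F(X, \bar U) \cdot \nabla_X \bar U - \sigma \Delta_p \bar U = \bar G(X, \bar U),
\]
on the extended state $X = (x,p) \in \Omega \times \R^m$, with drift $\bar F(X,\bar U) = (F, b)(x,p,U)$, source $\bar G(X,\bar U) = (G, Ab)(x,p,U)$ and initial condition $\bar U_0(X) = (U_0(x,p), Ap)$. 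This is exactly a master equation on the enlarged state $X$ with partial (degenerate) diffusion, of the type covered by Proposition \ref{prop monotonicity}.

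\textbf{Propagation of monotonicity.} Hypothesis \eqref{U0 V lineaire diff} is the initial differential $\alpha$-monotonicity condition required on $\bar U_0$, while \eqref{(G,F,Ab)}, after differentiation along the diagonal (the infinitesimal version of Lemma \ref{equivalence alpha monotony} applied in the smooth setting), is the joint monotonicity of the couple $(\bar G, \bar F)$ in $X$. We then replay the argument of Proposition \ref{prop monotonicity} for $\bar U$: we introduce
\[
\bar Z(t, X, \xi) = \xi \cdot \nabla_X (\bar U \cdot \xi) - \beta(t)\, |D_x U \cdot \xi_1|^2, \qquad \xi = (\xi_1, \xi_2) \in \R^{d+m},
\]
with $\beta(t) = \beta_0 e^{-\lambda_\beta t}$, $\beta_0 < \alpha$ and $\lambda_\beta$ calibrated from the Lipschitz norms of $(F, G, b)$. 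The fact that $\sigma \Delta_p$ now acts only on the $p$-block of $X$ contributes the same non-negative $2\beta\sigma|\cdot|^2$ term to the equation satisfied by $\bar Z$ as in the autonomous case, and the same stochastic-characteristics plus comparison argument yields $\bar Z \geq 0$ on $[0,T)$.

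\textbf{Extraction of bounds and main obstacle.} From $\bar Z \geq 0$ and the specialisation $\xi = (\xi_1, 0)$, one recovers $\xi_1^T D_x U \xi_1 \geq \beta(t) |D_x U \xi_1|^2$, which yields both the monotonicity of $U$ in $x$ and $\|D_x U(t, \cdot)\|_\infty \leq 1/\beta(t) \leq C_x$. The bound on $\|D_p U(t, \cdot)\|_\infty$ is then obtained either by specialising $\xi = (0, \xi_2)$ and exploiting the explicit $V = Ap$ block of $\bar U$ (which contributes a coercive $\xi_2^T A \xi_2$ on the left-hand side), or, if that specialisation does not close directly, by a secondary argument in the spirit of Proposition \ref{b(p) U lipschitz in x implies in p} adapted to the non-autonomous drift. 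The main obstacle is the careful calibration of the functional $\bar Z$ and of $\beta$ so that both the initial non-negativity (via \eqref{U0 V lineaire diff}) and the propagation (absorbing the cross terms produced by the joint dependence of $b$ on $(x, U)$) close simultaneously, in spite of the asymmetric form of \eqref{(G,F,Ab)} in which the first argument of $F$ is frozen; this bookkeeping is precisely the price one pays for allowing the noise dynamics to depend on the state of the game.
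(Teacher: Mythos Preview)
Your overall strategy is exactly the paper's: augment $U$ by $V(t,x,p)=Ap$, work with $\bar U=(U,V)$, and propagate non-negativity of the auxiliary quantity
\[
\bar Z(t,x,p,\xi)=\xi_1\cdot\nabla_x(U\cdot\xi_1)+\xi_2\cdot\nabla_p(U\cdot\xi_1)+\xi_2\cdot A\xi_2-\beta(t)|D_xU\cdot\xi_1|^2,
\]
which coincides with the paper's $Z$. The propagation step and the choice of $\beta(t)=\beta_0e^{-\lambda_\beta t}$ are also the same, and the bound on $D_xU$ via the specialisation $\xi_2=0$ is correct.

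There is, however, a genuine gap in your extraction of the $D_pU$ bound. Neither of your two suggestions works. Setting $\xi_1=0$ gives $W=U\cdot\xi_1=0$, so $\bar Z$ collapses to $\xi_2\cdot A\xi_2\geq 0$, which is trivially true and carries no information on $D_pU$. Your fallback, an adaptation of Proposition~\ref{b(p) U lipschitz in x implies in p}, does not close here: when $b$ depends on $U$, the Lipschitz constant of $s\mapsto b(X_s,p_s,U(t-s,X_s,p_s))$ in $p$ already involves $\|D_pU\|_\infty$, so the Gronwall estimate on $\esp{|p^{p_0}_s-p^{q_0}_s|}$ becomes circular and you cannot bootstrap from the $D_xU$ bound alone.

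The paper instead uses a \emph{mixed} specialisation: from $\bar Z\geq 0$ one has, for all $(\xi_1,\xi_2)$,
\[
\xi_1^T D_xU\,\xi_1+\xi_2^T D_pU\,\xi_1+\xi_2^T A\,\xi_2\geq 0.
\]
Taking $\xi_1=xe_i$, $\xi_2=ye_j$ with $x,y\in\R$ yields a non-negative quadratic form in $(x,y)$, whose discriminant condition gives
\[
\Big(\tfrac{\partial U_i}{\partial p_j}\Big)^2\leq 4A_{jj}\,\tfrac{\partial U_i}{\partial x_i}\leq \frac{4A_{jj}}{\beta(t)},
\]
and hence $\|D_pU(t,\cdot)\|_\infty\leq C_p$. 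This discriminant argument is the missing ingredient in your sketch.

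A minor remark: your claim that \eqref{(G,F,Ab)} is ``the joint monotonicity of $(\bar G,\bar F)$ in $X$'' is not literally correct, since $(\bar G,\bar F)$ do not depend on the $V$-component of $\bar u$, and the free $\R^m$-direction would violate any such inequality unless $b$ is constant. What is true, and what the computation actually uses, is that the quadratic form in the PDE for $\bar Z$ is evaluated only at vectors of the form $(-\xi_1,\nabla_xW,-\xi_2)$, for which \eqref{(G,F,Ab)} (with the factor $2A$ arising from $\nabla_{\xi_2}(\xi_2\cdot A\xi_2)$) gives precisely the needed lower bound. This is a point you should not black-box.
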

\begin{proof}
Set $\xi = (\xi_1,\xi_2) \in \R^{d+m}$, $W(t,x,p,\xi) = U(t,x,p)\cdot \xi_1$ and
\begin{align*}
Z(t,x,p,\xi)&=\xi_1 \cdot \nabla_x (U\cdot \xi_1)+\xi_2\cdot\nabla_p (U\cdot \xi_1)-\beta(t)|\nabla_x U\cdot \xi_1|^2+\xi_2\cdot A\xi_2\\
&=\xi_1 \cdot \nabla_x W+ \xi_2\cdot\nabla_p W-\beta(t)|\nabla_x W|^2+\xi_2\cdot A\xi_2
\end{align*}
Remark that for $\beta_0\leq \alpha$ we have $Z|_{t = 0}\geq 0$ by assumption on $U_0$.\\ Let us now show that $Z$ stays non-negative over time for a well chosen function $\beta$. Looking at the equation satisfied by $Z$ we get
\begin{align*}
&\partial_t Z-\sigma\Delta_p Z+b\cdot \nabla_p Z+F\cdot \nabla_x Z\\
&+\left(-D_pb\xi_2+D_xb\xi_1-D_ub\nabla_x W+2\beta D_xb\nabla_xW\right)\cdot\nabla_{\xi_2} Z+\left(\nabla_xWD_uF+\nabla_p WD_ub-\xi_1D_u G\right)\cdot \nabla_{\xi_1} Z\\
&=2\beta\sigma|D^2_{xp}W|^2-\frac{d\beta}{dt}|\nabla_x W|^2\\
&+\left(\xi_1 D_xG\xi_1-\xi_1D_uG\nabla_xW-\nabla_x WD_xF\xi_1+\nabla_xWD_uF\nabla_xW\right)\\
&+\left(\nabla_xWD_pF-\xi_1D_pG\right)\cdot\xi_2+2\xi_2\left(AD_pb\xi_2+AD_xb\xi_1-AD_ub\nabla_x W\right)\\
&+2\beta\left(-2\xi_2 AD_xb\nabla_xW+\nabla_xWD_xF\nabla_xW-\xi_1D_xG\nabla_xW\right)
\end{align*}
Remark that
\begin{align*}
&\left(\xi_1 D_xG\xi_1-\xi_1D_uG\nabla_xW-\nabla_x WD_xF\xi_1+\nabla_xWD_uF\nabla_xW\right)\\
&+\left(\nabla_xWD_pF-\xi_1D_pG\right)\cdot\xi_2+2\xi_2\left(AD_pb\xi_2+AD_xb\xi_1-AD_ub\nabla_x W\right)\\
&=\left(\begin{array}{c}-\xi_1\\ \nabla_x W\\-\xi_2\end{array}\right)^T\left(\begin{array}{ccc} D_x G\quad D_u G\quad D_p G\\ D_x F \quad D_u F\quad D_pF\\2AD_x b\quad 2AD_ub\quad 2AD_p b\end{array}\right)\left(\begin{array}{c}-\xi_1\\ \nabla_x W\\-\xi_2\end{array}\right)\geq \alpha (|\xi_1|^2+|\xi_2|^2)
\end{align*}
by \eqref{(G,F,Ab)}. Hence, we end up with:
\begin{align*}
 &\partial_t Z-\sigma\Delta_p Z+b\cdot \nabla_p Z+F\cdot \nabla_x Z\\
&+\left(-D_pb\xi_2+D_xb\xi_1-D_ub\nabla_x W+2\beta D_xb\nabla_xW\right)\cdot\nabla_{\xi_2} Z+\left(\nabla_xWD_uF+\nabla_p WD_ub-\xi_1D_u G\right)\cdot \nabla_{\xi_1} Z\\
&\geq (\alpha-\beta(t)) \left(|\xi_1|^2+|\xi_2|^2)\right)+\left(4\beta\|2AD_xb\|_\infty^2+4\beta \|D_xG\|^2_\infty+2\beta\|D_xF\|_\infty-\frac{d\beta}{dt}\right)|\nabla_xW|^2.
\end{align*}
Let now $\beta_0$ be chosen so that
\[\beta_0\leq  \alpha\]
and $\beta(t)=\beta_0e^{-\lambda_\beta t}$  with
\[\lambda_\beta\geq 4\|2AD_xb\|_\infty^2+4 \|D_xG\|^2_\infty+2\|D_xF\|_\infty.\]
For such a $\beta$ we have
\begin{align*}
     &\partial_t Z-\sigma\Delta_p Z+b\cdot \nabla_p Z+F\cdot \nabla_x Z\\
&+\left(-D_pb\xi_2+D_xb\xi_1-D_ub\nabla_x W+2\beta D_xb\nabla_xW\right)\cdot\nabla_{\xi_2} Z+\left(\nabla_xWD_uF+\nabla_p WD_ub-\xi_1D_u G\right)\cdot \nabla_{\xi_1} Z\geq 0.
\end{align*}
The property $Z\geq 0$ follows from the same argument we used in Proposition \ref{prop monotonicity}. It only remains to show that this gives estimates on the gradient of the solution $U$. Remark first that by taking $\xi_2=0$, we get back
\[\forall t\in [0,T), \langle \xi_1,D_x U,\xi_1\rangle-\beta(t)\|\langle D_xU,\xi_1\rangle\|^2\geq 0.\]
As in Proposition \ref{prop monotonicity}, this implies:
\[\|D_xU(t,\cdot)\|_\infty\leq \frac{1}{\beta_t}=\frac{1}{\beta_0}e^{\lambda_\beta t}\leq \frac{1}{\beta_0}e^{\lambda_\beta T}.\]

For the gradient of $U$ in $p$, we know that \[\forall t\in [0,T),\xi_1,\xi_2,\quad \langle\xi_1,D_xU,\xi_1\rangle+\langle \xi_2,D_pU,\xi_1\rangle+\xi_2\cdot A\xi_2\geq 0.\]
Let $t< T$ be fixed, $\xi_1=xr^x_i$ and $\xi_2=yr^p_j$ where $(r_i)$ indicate basis elements.
Hence, for all $x,y \in \R$, \[ x^2\left(\frac{\partial U_i}{\partial x_i}\right)+xy\frac{\partial U_i}{\partial p_j}+A_{jj}y^2\geq 0. \]
Thus \[\left(\frac{\partial U_i}{\partial p_j}\right)^2\leq 4A_{jj}\left(\frac{\partial U_i}{\partial x_i}\right)\leq \frac{4A_{jj}}{\beta_0}e^{\lambda_\beta T}.\]
\end{proof}


\quad

\subsection{Existence of solutions}
\subsubsection{Main result}
We are now going to proceed as we did in Section 2 to justify the estimate of Proposition \ref{prop:estimate} for Lipschitz solutions of the master equation \eqref{MFG b(x,p,u)}. The heuristic presented earlier hints at the idea of studying 
\begin{equation}
\label{def Z (p,q)}
Z(t,x,y,p,q)=\langle U(t,x,p)-U(t,y,q),x-y\rangle+\langle p-q, A,p-q \rangle -\beta(t)|U(t,x,p)-U(t,y,q)|^2.
\end{equation}
Due to the doubling of variable we did in $(p,q)$, the equation satisfied for smooth $U$ (and hence smooth $Z$) is slightly different. Indeed, in this case $Z$ would be solution of:
\begin{align}
\label{eq:Tr(A)}
&\partial_t Z+F^x\cdot\nabla_x Z+F^y\cdot\nabla_y Z+b^x\cdot \nabla_p Z+b^y\cdot \nabla_q Z-\sigma \Delta_p Z-\sigma\Delta_q Z\nonumber\\
&=\langle G^x-G^y,x-y\rangle+\langle F^x-F^y,U^x-U^y\rangle+\langle 2A(b^x-b^y),p-q\rangle\\
&-2\beta \langle G^x-G^y,U^x-U^y\rangle+2\sigma\beta(|D_pU^x|^2+|D_qU^y|^2)-4\sigma TrA-\frac{d\beta}{dt}|U^x-U^y|^2\nonumber,
\end{align}
where we used the notation $U^x=U(t,x,p)$, $U^y=U(t,y,q)$, $F^x=F(x,p,U^x)$ and so on. 
However as is, equation \eqref{eq:Tr(A)} does not seem suited for the use of a comparison principle. Even under the assumption that \eqref{(G,F,Ab)} holds for some $A\in\mathcal{M}_m(\reels)$, it seems hard to get rid of the term $-4\sigma Tr(A)$ which is always negative as $A$ must be a positive semi-definite matrix. 
However, remark that
\[D_{pq} Z=-2A+2\beta D_qU^y D_pU^x.\]
Hence, for $B=\left(\begin{array}{cc} I_{m}\quad I_{m}\\ I_{m}\quad I_{m}\\\end{array}\right)$ we obtain
\[\sigma Tr(BD^2_{(p,q)} Z)=\sigma \Delta_p Z+\sigma \Delta_q Z-4\sigma TrA+4\beta \sigma Tr(D_pU^xD_qU^y)\]
where $D^2_{(p,q)}Z$ is the hessian matrix of $Z$ in $(p,q)$ only.
It follows that the equation satisfied by $Z$ might be rewritten as:
\begin{align}
\label{eq:B}
&\partial_t Z+F^x\cdot\nabla_x Z+F^y\cdot\nabla_y Z+b^x\cdot \nabla_p Z+b^y\cdot \nabla_q Z-\sigma Tr(BD^2_{(p,q)} Z)\nonumber\\
&=\langle G^x-G^y,x-y\rangle+\langle F^x-F^y,U^x-U^y\rangle+\langle 2A(b^x-b^y),p-q\rangle\\
&-2\beta \langle G^x-G^y,U^x-U^y\rangle+2\sigma\beta(|D_pU^x-D_qU^y|^2)-\frac{d\beta}{dt}|U^x-U^y|^2\nonumber.
\end{align}
As $B\geq 0$ it is now more clear that we can use a comparison principle for Equation \eqref{eq:B}. As in Section 2 we may now state the following key Lemma.

\begin{lemma}
\label{lemma: Z supersol b(x,p,U)}
Assume that $(F,G,b,U_0)$ is Lipschitz continuous. Let $U$ be a Lipschitz solution of the master equation \eqref{MFG b(x,p,u)} on $[0,T)$ for $T>0$ and $Z$ be defined as in \eqref{def Z (p,q)}. Then $Z$ is a viscosity supersolution of: 
\begin{align}
\label{Z supersol (p,q)}
&\partial_t Z+F^x\cdot\nabla_x Z+F^y\cdot\nabla_y Z+b^x\cdot \nabla_p Z+b^y\cdot \nabla_q Z-\sigma Tr(BD^2_{(p,q)} Z)\nonumber\\
&\geq \langle G^x-G^y,x-y\rangle+\langle F^x-F^y,U^x-U^y\rangle+\langle 2A(b^x-b^y),p-q\rangle\\
&-2\beta \langle G^x-G^y,U^x-U^y\rangle-\frac{d\beta}{dt}|U^x-U^y|^2,\nonumber
\end{align}
on $(0,T)\times \Omega^2\times\R^{2m}$.
\end{lemma}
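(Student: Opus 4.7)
The plan is to follow closely the proof of Lemma \ref{lemma Z supersol b(p)}, adapting it to three new features: the doubling of the noise variable into $(p,q)$, the dependence of $b$ on $(x,U)$, and the extra quadratic piece $\langle p-q, A(p-q)\rangle$ appearing in $Z$. As before, the argument combines the dynamic programming principle satisfied by a Lipschitz solution with a test-function and stopping-time argument.

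The first step is to couple two copies of the characteristics, one started at $(x,p)$ and the other at $(y,q)$, driven by the \emph{same} underlying Brownian motion $W$. The key structural observation is that, under this common-noise coupling, the $\R^{2m}$-valued process $(p_s,q_s)$ has quadratic covariation $2\sigma B\, ds$, which is precisely why the correct second-order operator for $Z$ is $\sigma\, \mathrm{Tr}(B D^2_{(p,q)})$ rather than the non-degenerate $\sigma(\Delta_p + \Delta_q)$. A useful byproduct of the coupling is that $p_s-q_s$ carries no diffusion, solving $d(p_s-q_s)=-(b^x_s-b^y_s)\,ds$ along the coupled flow.

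The second step is to decompose $Z$ into its three pieces and treat each one separately. The bilinear piece $\langle U(t,x,p)-U(t,y,q),x-y\rangle$ satisfies an identity analogous to \eqref{Z beta=0 b(p)} via the dynamic programming principle; the squared norm $|U(t,x,p)-U(t,y,q)|^2$ is bounded above by Jensen's inequality as in \eqref{martingale inequality b(p)}; and the quadratic piece, being free of diffusion along the coupled flow, satisfies the exact identity
\[
\langle p-q, A(p-q)\rangle = \mathbb{E}\bigl[\langle p_{t-s}-q_{t-s}, A(p_{t-s}-q_{t-s})\rangle\bigr] + 2\,\mathbb{E}\!\int_0^{t-s} \langle A(p_u-q_u), b^x_u-b^y_u\rangle\, du,
\]
which is exactly what produces the term $\langle 2A(b^x-b^y), p-q\rangle$ on the right-hand side of \eqref{Z supersol (p,q)}. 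Combining the three pieces and rearranging as in the autonomous case yields an integral inequality of the form $Z(t,x,y,p,q) \geq \mathbb{E}[Z(s,\ldots)] + \mathbb{E}\!\int_0^{t-s}\mathcal{I}_u\, du$, where $\mathcal{I}$ dominates the right-hand side of \eqref{Z supersol (p,q)}.

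For the final step, given a smooth $\varphi$ such that $Z-\varphi$ attains a minimum equal to zero at some $(t,x,y,p,q) \in (0,T)\times \Omega^2\times \R^{2m}$, I replace the deterministic time $s$ by a small stopping time $\tau_h = \inf\{s>0 : (s,p_s,q_s) \notin [0,h]\times B(p,1)\times B(q,1)\}$, transfer the inequality from $Z$ to $\varphi$ using the minimum property, apply Itô's formula to $\varphi$ along the coupled process up to $\tau_h$, divide by $h$ and send $h\to 0$ by dominated convergence on the compact trajectory set. The covariation of $(p_s,q_s)$ produces the degenerate operator $\sigma\,\mathrm{Tr}(B D^2_{(p,q)}\varphi)$, and the drift and source terms assemble into exactly \eqref{Z supersol (p,q)}. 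I expect the main obstacle to be the bookkeeping of the many drift and mixed Itô contributions, and verifying that the degenerate operator emerges correctly from the common Brownian driver; the dependence of $b$ on $U$ adds no qualitative difficulty, since along each trajectory $b$ is evaluated at the Lipschitz function $U$ and may, in the Lipschitz-solution framework, be treated as a fixed Lipschitz vector field.
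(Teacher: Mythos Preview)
Your proposal is correct and follows essentially the same route as the paper: couple the two characteristic systems with a common Brownian motion, decompose $Z$ into the bilinear piece, the quadratic $A$-piece, and the squared-norm piece, derive an integral inequality via the dynamic programming principle and Jensen, then pass to a test function and use It\^o with a localizing stopping time to extract the viscosity supersolution inequality, the common driver producing the degenerate operator $\sigma\,\mathrm{Tr}(B D^2_{(p,q)}\varphi)$. Your handling of the $A$-term is in fact slightly cleaner than the paper's: since $d(p_s-q_s)$ carries no diffusion under the coupling, you obtain an exact identity for $\langle p-q,A(p-q)\rangle$, whereas the paper writes a decomposition containing an additional nonnegative quadratic term which it then discards; both lead to the same conclusion.
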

\begin{proof}
We first show that $Z$ satisfies the following inequality
\begin{align}
\label{ineq Z (p,q)}
    \nonumber Z(t,x,y,p_0,q_0)&\geq \esp{Z(s,X^{x}_{t-s},Y^y_{t-s},p^{p_0}_{t-s},q^{q_0}_{t-s})-\beta(s)\left\|\int_0^{t-s}(G_u^{x,p_0}-G_u^{y,q_0})du\right\|^2}\\
    &+\esp{\int_0^{t-s}\left(\langle G_u^{x,p_0}-G_u^{y,q_0},x-y\rangle+\langle F_u^{x,p_0}-F_u^{y,q_0},U_s^{x,p_0}-U_s^{y,q_0}\rangle\right)du}\\
    \nonumber&+\esp{\int_0^{t-s}\langle p-q,2A(b_u^{x,p_0}-b_u^{y,q_0}) \rangle du}\\
    \nonumber&-(\beta(t)-\beta(s))\|U(t,x,p_0)-U(t,y,q_0)\|^2-2\beta(s)\esp{\int_0^{t-s}\langle U_s^{x,p_0}-U_s^{y,q_0},G_u^{x,p_0}-G_u^{y,q_0}\rangle},
\end{align}
with
\[
\left\{
\begin{array}{c}
     dX_s=-F(X_s,p_s,U(t-s,X_s,p_s)ds ,  \\
     dY_s=-F(Y_s,q_s,U(t-s,Y_s,q_s)ds,\\
     dp_s=-b(X_s,p_s,U(t-s,X_s,p_s)ds+\sqrt{2\sigma}dW_s,\\
     dq_s=-b(Y_s,q_s,U(t-s,Y_s,q_s)ds+\sqrt{2\sigma}dW_s,\\
     U^{x,p_0}_s=U(t-s,X_s^x,p_s^{p_0}),\\
     G_s^{x,p_0}=G(X_s^x,p_s^{p_0},U_s^{x,p_0}),
\end{array}
\right.
\]
and the notation $F_s^{x,p_0}$ and $b_s^{x,p_0}$ being analogous to the ones for G. As usual in this kind of coupling techniques, or in comparison questions in the theory of viscosity solutions, it is fundamental for the two processes $(p_s)_{s\geq 0},(q_s)_{s\geq 0}$ to be generated with the same Brownian motion. Indeed if the computations were done for two independent Brownian motions, we would end up with $Z$ being a supersolution of \eqref{eq:Tr(A)} (i.e there will be a term in $Tr(A)$). The link between \eqref{eq:Tr(A)} and \eqref{ineq Z (p,q)} is obvious for smooth $Z$ but it does not seems so easy to switch from one to the other in the viscosity sense. 

\quad

Let 
\[W(t,x,y,p,q)=\langle U(t,x,p)-U(t,y,q),x-y\rangle.\]
It follows that 
\begin{align*}
    Z(t,x,y,p,q)=\underbrace{W(t,x,y,p,q)}_{I}+&\underbrace{\langle p-q, A(p-q)\rangle}_{II}-\beta(s)\underbrace{|U(t,x,p)-U(t,y,p)|^2}_{III}\\
    &-(\beta(t)-\beta(s))|U(t,x,p)-U(t,y,p)|^2 .
\end{align*} 

For $II$, we can write
\begin{align*}
    \langle p_0-q_0, A,p_0-q_0\rangle&=\esp{\langle p^{p_0}_{t-s}-q^{q_0}_{t-s}, A,p^{p_0}_{t-s}-q^{q_0}_{t-s}\rangle+\int_0^{t-s}\langle p-q,A+A^T,b_u^{x,p_0}-b_u^{y,q_0} \rangle du}\\
    &+\esp{\langle \int_0^{t-s}(p_u^{p_0}-q_u^{q_0})du,A,\int_0^{t-s}(p_u^{p_0}-q_u^{q_0})du\rangle}\\
    &\geq \esp{\langle p^{p_0}_{t-s}-q^{q_0}_{t-s}, A,p^{p_0}_{t-s}-q^{q_0}_{t-s}\rangle+\int_0^{t-s}\langle p-q,A+A^T,b_u^{x,p_0}-b_u^{y,q_0} \rangle du}.
\end{align*}
The terms $I$ and $III$ are treated as in Lemma \ref{lemma Z supersol b(p)}. Hence, because we assumed that $A$ is symmetric, using the inequalities satisfied by those 3 terms, $Z$ satisfies \eqref{ineq Z (p,q)}.

As in the autonomous case, considering a test function $\varphi$ such that
\[(Z-\varphi)(t,x,y,p,q)=\min(Z-\varphi)=0,\] we get that at $(t,x,y,p,q)$, $\varphi$ satisfies
\begin{align*}
&\partial_t \varphi+F^x\cdot\nabla_x \varphi+F^y\cdot\nabla_y \varphi+b^x\cdot \nabla_p \varphi+b^y\cdot \nabla_q \varphi-\sigma Tr(BD^2_{(p,q)} \varphi)\\
&\geq \langle G^x-G^y,x-y\rangle+\langle F^x-F^y,U^x-U^y\rangle+\langle 2A(b^x-b^y),p-q\rangle\\
&-2\beta \langle G^x-G^y,U^x-U^y\rangle-\frac{d\beta}{dt}|U^x-U^y|^2.
\end{align*}
Observe that the term $\sigma Tr(BD^2_{(p,q)} \varphi)$ appears as a direct consequence of Ito's lemma because we used the same Brownian motion for $p$ and $q$. 
\end{proof}

Armed with this Lemma, we may now state a global existence result for Lipschitz solutions of the master equation \eqref{MFG b(x,p,u)}. We recall that there is always uniqueness of a maximal Lipschitz solution so it will imply a well-posedness result.
 \begin{thm}
 \label{theorem:existence b(x,p,u)}
     Assume that $(U_0,G,F,b)$ is Lipschitz continuous in all the variables and further assume that Hypothesis \ref{hyp:b(x,p,U)} holds for some $\alpha>0$ and $A\in \mathcal{M}_m(\reels)$.
    Then there exists a unique global Lipschitz solution of the master equation \eqref{MFG b(x,p,u)}.
 \end{thm}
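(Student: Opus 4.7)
The plan is to mirror the strategy of Theorem \ref{theorem: existence b(p)}, now using the enriched auxiliary function $Z$ of \eqref{def Z (p,q)} and the viscosity supersolution property established in Lemma \ref{lemma: Z supersol b(x,p,U)}. Local existence and uniqueness of a maximal Lipschitz solution $U$ on some $[0,T_c)$ is provided by the straightforward adaptation of Theorem \ref{lip sol b(p)} to the new functional $\tilde{\psi}$ (cf. the remark following the definition); by item 3 of that result, it is enough to derive a priori bounds on $\|D_xU(t,\cdot)\|_\infty$ and $\|D_pU(t,\cdot)\|_\infty$ that are locally uniform in $t<T_c$, whence necessarily $T_c=+\infty$. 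The key observation is that, by the very design of $Z$, a single comparison argument will now yield both of these estimates simultaneously, without having to invoke a separate bootstrap in $p$ as in Proposition \ref{b(p) U lipschitz in x implies in p}.

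Fix $T<T_c$ and take $\beta(t)=\beta_0 e^{-\lambda t}$ for parameters $\beta_0\in(0,\alpha]$ and $\lambda>0$ to be chosen. Using \eqref{(G,F,Ab)} from Hypothesis \ref{hyp:b(x,p,U)} (after absorbing via Young's inequality the Lipschitz contribution $F(x,q,U^y)-F(y,q,U^y)$ that bridges the difference $F^x-F^y$ of \eqref{Z supersol (p,q)} with the difference appearing in the hypothesis), the sum of the first three terms on the right-hand side of \eqref{Z supersol (p,q)} is bounded below by $\alpha'(|x-y|^2+|p-q|^2)$ for some $\alpha'>0$. The remaining terms
\[
-2\beta\langle G^x-G^y,U^x-U^y\rangle-\tfrac{d\beta}{dt}\,|U^x-U^y|^2
\]
are then handled by Young's inequality and by the exponential decay of $\beta$: taking $\beta_0$ small enough and $\lambda$ large enough, depending only on the Lipschitz data and on $\|A\|$, the whole right-hand side of \eqref{Z supersol (p,q)} becomes pointwise non-negative. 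The initial-time inequality $Z|_{t=0}\geq 0$ is precisely \eqref{condition U_0 V lineaire } as soon as $\beta_0\leq \alpha$.

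Since $B\geq 0$, the operator in \eqref{Z supersol (p,q)} is degenerate parabolic with Lipschitz drift on each $[0,\tau]\subset[0,T)$, so the viscosity comparison principle applies exactly as in the autonomous case via Theorem~2.5 of \cite{34bed8a7-6223-3f55-b20d-c7daa21c8c15}, giving $Z\geq 0$ on $[0,T)\times\Omega^2\times\reels^{2m}$. Specializing $p=q$ reproduces the bound $\|D_xU(t,\cdot)\|_\infty\leq 1/\beta(t)$ of Theorem \ref{theorem: existence b(p)}; specializing $x=y$ reduces $Z\geq 0$ to $\beta(t)\,|U(t,x,p)-U(t,x,q)|^2\leq \langle p-q, A(p-q)\rangle$, whence $\|D_pU(t,\cdot)\|_\infty\leq \sqrt{\|A\|/\beta(t)}$. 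Both bounds remain finite as $t\uparrow T$ for arbitrary $T<T_c$, forcing $T_c=+\infty$; uniqueness is inherited from the uniqueness of the maximal Lipschitz solution.

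The main delicate point is the justification of the viscosity comparison principle for \eqref{Z supersol (p,q)}: the state space is unbounded in $(p,q)$, the diffusion matrix $B$ is rank-deficient, and the drift depends on $U$, which is only Lipschitz. The crucial check is that $Z$ has at most quadratic growth in $(p,q)$ at infinity (immediate from the Lipschitz bound on $U$ on $[0,T]$), so that the comparison framework used in Theorem \ref{theorem: existence b(p)} can be reproduced here. Once this is granted, the remainder of the argument is a direct adaptation of the autonomous proof.
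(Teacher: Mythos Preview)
Your argument is correct and closely follows the paper's own proof: local existence of a maximal Lipschitz solution, the supersolution property of $Z$ from Lemma \ref{lemma: Z supersol b(x,p,U)}, an exponential choice of $\beta$ so that Hypothesis \ref{hyp:b(x,p,U)} forces the right-hand side of \eqref{Z supersol (p,q)} to be non-negative, the comparison principle of \cite{34bed8a7-6223-3f55-b20d-c7daa21c8c15}, and extraction of gradient bounds from $Z\geq 0$.

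Two minor differences are worth flagging. First, the paper applies \eqref{(G,F,Ab)} directly to $\langle F^x-F^y,U^x-U^y\rangle$ without the bridging term you single out; your extra Young step is either a harmless precaution against what looks like a typographical slip in the statement of \eqref{(G,F,Ab)} (the second $F$ should presumably read $F(y,q,v)$), or a small patch if the hypothesis is taken literally. Second, and more interestingly, your extraction of the $p$-gradient bound by specializing $x=y$ is cleaner than the paper's: there one sets $x=y+\lambda e_i$, bounds the cross term by $\lambda^2\|D_xU\|_\infty$ via the Lipschitz estimate in $x$, and reads off $(U^i(t,y,p)-U^i(t,y,q))^2\leq 4\|D_xU\|_\infty\langle p-q,A(p-q)\rangle$ from the discriminant of the resulting quadratic in $\lambda$, obtaining $\|D_pU\|_\infty\leq 2\sqrt{\lambda_a\|D_xU\|_\infty}$. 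Your route gives $\|D_pU\|_\infty\leq\sqrt{\lambda_a/\beta(t)}$ in one line and sidesteps the mild domain issue of letting $\lambda$ range over all of $\reels$ while keeping $y+\lambda e_i\in\Omega$.
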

\begin{proof}
Since $(U_0,G,F,b)$ is Lipschitz continuous, we have existence of a Lipschitz solution at least on $[0,T_c)$ for some $T_c>0$. Let $T<T_c$, thanks to Lemma \ref{lemma: Z supersol b(x,p,U)}, we already know that
\[
Z(t,x,y,p,q)=\langle U(t,x,p)-U(t,y,q),x-y\rangle+\langle p-q, A,p-q \rangle -\beta(t)|U(t,x,p)-U(t,y,q)|^2
\]
is a viscosity supersolution of Equation \eqref{Z supersol (p,q)} on $(0,T)$. It only remains to choose a smooth function $\beta$ such that both $Z_{t=0}\geq 0$ and 
\begin{align*}
\langle G^x-G^y,x-y\rangle+\langle F^x-F^y,U^x-U^y\rangle+\langle &2A(b^x-b^y),p-q\rangle\\
-2\beta \langle G^x-G^y,U^x-U^y\rangle-\frac{d\beta}{dt}|U^x-U^y|^2\geq 0
\end{align*}
holds. The fact that $(F,G,b)$ satisfies \eqref{(G,F,Ab)} yields
\begin{align*}
&\langle G^x-G^y,x-y\rangle+\langle F^x-F^y,U^x-U^y\rangle+\langle 2A(b^x-b^y),p-q\rangle-2\beta \langle G^x-G^y,U^x-U^y\rangle-\frac{d\beta}{dt}|U^x-U^y|^2\\
&\geq \alpha \left(|x-y|^2+|p-q|^2\right)-\frac{d\beta}{dt}|U^x-U^y|^2-2\beta \langle G^x-G^y,U^x-U^y\rangle\\
&\geq (\alpha-\beta )\left(|x-y|^2+|p-q|^2\right)-\left(\frac{d\beta}{dt}+\beta (2\|D_uG\|_\infty+\|D_xG\|^2_\infty+\|D_pG\|^2_\infty)\right)|U^x-U^y|^2.
\end{align*}
\color{black}
We also know that taking $\beta_0\leq \alpha$ implies that $Z_{t=0}\geq 0$ as $U_0$ satisfies \eqref{condition U_0 V lineaire }. For $\beta(t)=\beta_0e^{-\lambda_\beta t}$ with $\lambda_\beta\geq  2\|D_uG\|_\infty+\|D_xG\|^2_\infty+\|D_pG\|^2_\infty$ and $\beta_0\leq \alpha$ both inequalities are satisfied. 

Finally we get that $Z$ is a viscosity supersolution of 
\[\partial_t Z+F^x\cdot\nabla_x Z+F^y\cdot\nabla_y Z+b^x\cdot \nabla_p Z+b^y\cdot \nabla_q Z-\sigma Tr(BD^2_{(p,q)} Z)\geq 0,\]
on $(0,T)$ with $Z_{t=0}\geq 0$. 
We may then conclude that $Z$ stays non-negative on $[0,T)$ with Theorem 2.5 of \cite{34bed8a7-6223-3f55-b20d-c7daa21c8c15}.

Evaluating $Z$ for $p=q$ we recover an estimate on the Lipschitz norm of $U$ in $x$ on $[0,T)$ as in the autonomous case. For $\|D_pU\|_\infty$ we know that
\[\langle U(t,x,p)-U(t,y,q),x-y\rangle+\langle p-q,A,p-q\rangle\geq 0.\]
Taking $x=y+\lambda e_i$ and using the fact that $U$ is Lipschitz in $x$ we get 
\[\lambda^2\|D_xU\|_\infty+\lambda (U^i(t,x,p)-U^i(t,x,q))+\langle p-q,A,p-q\rangle\geq 0,\]
which implies that
\[\forall (t,x,p,q)\in [0,T)\times \Omega\times \reels^{2m}, \quad (U^i(t,x,p)-U^i(t,x,q))^2-4\|D_xU\|_\infty\langle p-q,A,p-q\rangle\leq 0.\]
Denoting by $\lambda_a>0$ be the biggest eigenvalue of $A$, we obtain
\[\forall t<T, \quad \|D_pU(t,\cdot)\|_\infty\leq 2\sqrt{\|D_xU(t,\cdot)\|_\infty \lambda_a}.\]

Suppose now that the maximum time of existence satisfies $T_c<+\infty$. By the local existence Theorem 2.4 of \cite{bertucci2023lipschitz} we must have
\[\underset{t\to T_c}{\lim}\|D_{(x,p)}U(t,\cdot)\|_\infty=+\infty.\]
However we have an uniform bound in time on the Lipschitz norm of $U$ in $(x,p)$ for $t<T_c$. As a consequence this is absurd, hence $T_c=+\infty$ and the result is proved.
\end{proof}
\begin{remarque}
Take note that when $D_xb,D_ub=0$ we recover the hypothesis for the autonomous case. Let $Z$ be defined as in Section 2 by
\[Z(t,x,y,p)=\langle U(t,x,p)-U(t,y,p),x-y\rangle -\beta(t)|U(t,x,p)-U(t,y,p)|^2.\]
Define
\[M=\underset{[0,T]\times \Omega^2\times \reels^{2m}}{\min} Z,\]
and consider
\[Z^\varepsilon(t,x,y,p,q)=\langle U(t,x,p)-U(t,y,q),x-y\rangle -\beta(t)|U(t,x,p)-U(t,y,q)|^2+\frac{1}{2\varepsilon}|p-q|^2+\alpha|p|^2+\alpha|q|^2.\]
Looking at point of minimum of $Z^\varepsilon$ we can show that as $\varepsilon$ tends to 0 there exists a subsequence $(t_{\varepsilon_n},x_{\varepsilon_n},y_{\varepsilon_n},p_{\varepsilon_n},q_{\varepsilon_n})_{n\geq 0}$ such that 
\[
\left\{
\begin{array}{c}
    (t_{\varepsilon_n},x_{\varepsilon_n},y_{\varepsilon_n},p_{\varepsilon_n},q_{\varepsilon_n})\to (t^*,x^*,y^*,p^*,q^*)\in [0,T]\times \Omega^2\times \reels^{2m} \\
    \underset{n\to+\infty }{\lim}Z^{\varepsilon_n}(t_{\varepsilon_n},x_{\varepsilon_n},y_{\varepsilon_n},p_{\varepsilon_n},q_{\varepsilon_n})=M\\
    \frac{1}{\varepsilon_n}|p_{\varepsilon_n}-q_{\varepsilon_n}|^2\to 0\\
    |p_{\varepsilon_n}-q_{\varepsilon_n}|\to 0\\
\end{array}
\right.
\]
Because $\frac{1}{\varepsilon}\langle b(p)-b(q),p-q\rangle \leq \frac{C}{\varepsilon}|p-q|^2$, we may conclude by taking first $\varepsilon_n\to 0$ and then $\alpha\to 0$ that we only need conditions \eqref{condition U_0 V lineaire } and \eqref{(G,F,Ab)} to hold for $p=q$ to deduce that $M\geq 0$. This way, we indeed recover the hypothesis we had in the autonomous case. 
\end{remarque}

\subsection{Extensions}
Now that we have established the main setting, let us discuss on some extensions
\subsubsection{On the strong monotonicity of $(G,F,b)$}
As we stated it at the beginning the strong monotonicity assumptions on $(G,F,b)$, condition \eqref{(G,F,Ab)} can be weakened quite a bit. Following the proof of Theorem \ref{theorem:existence b(x,p,u)}, the term we control with strong monotonicity depends on $G$ only. This means that strong monotonicity is only needed whenever $G$ is not degenerate. In particular if $G$ does not depend on $p$ then strong monotonicity in this variable is not needed. We may summarize this observation in the following corollary:
\begin{corol}
\label{corol: weaker monotonicity}
Suppose there exist a matrix $A\in\mathcal{M}_m(\reels)$ and $\alpha>0$ such that for all $x,y$ in $\Omega$, $p,q$ in $\reels^m$ and $u,v$ in $\reels^d$ there is $t=t(x,y,p,q,u,v)\in[0,1]$ such that
\begin{align*}
\langle U_0(x,p)-U_0(y,q),x-y\rangle+&\langle p-q,A(p-q)\rangle\geq \alpha|U_0(x,p)-U_0(y,q)|^2,\\
\nonumber\\
\nonumber\langle G(x,p,u)-G(y,q,v),x-y&\rangle+\langle F(x,p,u)-F(x,q,v),u-v\rangle+\langle b(x,p,u)-b(y,q,v),(A+A^T)(p-q)\rangle \\
&  \geq \alpha |G(x,p,tu+(1-t)v)-G(y,q,tu+(1-t)v)|^2.
\end{align*}
Then there exist a Lipschitz solution to \eqref{MFG b(x,p,u)} on any time interval. 
\end{corol}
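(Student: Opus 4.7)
The plan is to follow the exact strategy of Theorem \ref{theorem:existence b(x,p,u)}, keeping the same auxiliary function
\[
Z(t,x,y,p,q)=\langle U(t,x,p)-U(t,y,q),x-y\rangle+\langle p-q,A(p-q)\rangle-\beta(t)|U(t,x,p)-U(t,y,q)|^2,
\]
and still invoking Lemma \ref{lemma: Z supersol b(x,p,U)} to get that $Z$ is a viscosity supersolution of \eqref{Z supersol (p,q)}. The only step that has to be revisited is the verification that the right-hand side of \eqref{Z supersol (p,q)} is non-negative for a well chosen $\beta$; everything that follows (non-negativity of $Z$ by comparison, gradient bounds by evaluating at $p=q$ and at $x=y$, conclusion via Theorem 2.4 of \cite{bertucci2023lipschitz}) is then unchanged.

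The main effort is to compensate the loss of coercivity in $|x-y|^2+|p-q|^2$. Writing $u=U^x$, $v=U^y$, and for a point $(x,y,p,q,u,v)$ selecting the value $t=t(x,y,p,q,u,v)\in[0,1]$ given by the hypothesis, set $w=tu+(1-t)v$. I would then decompose
\[
G^x-G^y=\bigl[G(x,p,w)-G(y,q,w)\bigr]+\bigl[G(x,p,u)-G(x,p,w)\bigr]+\bigl[G(y,q,w)-G(y,q,v)\bigr],
\]
so that, since $|u-w|=(1-t)|u-v|$ and $|w-v|=t|u-v|$, the Lipschitz continuity of $G$ in its last argument gives $|G^x-G^y-(G(x,p,w)-G(y,q,w))|\leq \|D_uG\|_\infty|u-v|$. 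Using this together with Young's inequality with parameter $\lambda>0$,
\[
-2\beta\,\langle G^x-G^y,u-v\rangle \geq -\tfrac{2\beta}{\lambda}|G(x,p,w)-G(y,q,w)|^2-\bigl(\tfrac{2\beta\|D_uG\|_\infty^2}{\lambda}+\beta\lambda\bigr)|u-v|^2.
\]
Combining this with the weakened hypothesis yields
\[
\text{RHS of }\eqref{Z supersol (p,q)}\geq \bigl(\alpha-\tfrac{2\beta}{\lambda}\bigr)|G(x,p,w)-G(y,q,w)|^2-\bigl(\tfrac{2\beta\|D_uG\|_\infty^2}{\lambda}+\beta\lambda-\tfrac{d\beta}{dt}\bigr)|u-v|^2.
\]
One then picks $\lambda=2\beta_0/\alpha$, $\beta(t)=\beta_0 e^{-\lambda_\beta t}$ with $\beta_0\leq\alpha$ and $\lambda_\beta\geq \tfrac{\alpha\|D_uG\|_\infty^2}{\beta_0}+\tfrac{2\beta_0}{\alpha}$, which makes both coefficients non-negative (note that $\lambda$ and $\lambda_\beta$ depend only on the data, not on the free parameter $t\in[0,1]$).

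The initial condition $Z|_{t=0}\geq 0$ holds as soon as $\beta_0\leq\alpha$ by the first part of the weakened hypothesis, exactly as before. With these choices, $Z$ is a viscosity supersolution of
\[
\partial_t Z+F^x\cdot\nabla_x Z+F^y\cdot\nabla_y Z+b^x\cdot\nabla_p Z+b^y\cdot\nabla_q Z-\sigma\,\mathrm{Tr}(B D^2_{(p,q)}Z)\geq 0,
\]
with non-negative initial data, so the comparison principle (Theorem 2.5 of \cite{34bed8a7-6223-3f55-b20d-c7daa21c8c15}) gives $Z\geq 0$ on the maximal interval of existence, and the rest of the proof of Theorem \ref{theorem:existence b(x,p,u)} transfers verbatim.

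The main obstacle I anticipate is purely bookkeeping: making sure that the convex combination parameter $t$, which is allowed to depend on $(x,y,p,q,u,v)$, does not sneak into the choice of $\beta$ or $\lambda$. The decomposition above circumvents this because the "error" $\|D_uG\|_\infty|u-v|$ is uniform in $t\in[0,1]$, so the resulting inequality holds with the same $\beta$ at every point, which is what is needed to apply the viscosity comparison principle.
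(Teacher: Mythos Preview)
Your approach is essentially the same as the paper's: both follow the proof of Theorem \ref{theorem:existence b(x,p,u)} verbatim except for the treatment of the cross term $-2\beta\langle G^x-G^y,U^x-U^y\rangle$, and both handle it by splitting $G^x-G^y$ through the intermediate point $w=tu+(1-t)v$, bounding the $u$-increments by $\|D_uG\|_\infty|u-v|$, and applying Young's inequality. The paper simply records the resulting pointwise bound
\[
\langle G(x,p,u)-G(y,q,v),u-v\rangle\leq \tfrac12|G(x,p,w)-G(y,q,w)|^2+\bigl(\tfrac12+\|D_uG\|_\infty\bigr)|u-v|^2,
\]
valid for every $t\in[0,1]$, whereas you carry an explicit Young parameter $\lambda$; this is a cosmetic difference. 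Your remark that the constants are uniform in the free parameter $t$ is exactly the point that makes the argument go through.

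One small slip: in your displayed lower bound for the right-hand side of \eqref{Z supersol (p,q)}, the coefficient of $|u-v|^2$ should read $-\bigl(\tfrac{2\beta\|D_uG\|_\infty^2}{\lambda}+\beta\lambda+\tfrac{d\beta}{dt}\bigr)$, not $-\tfrac{d\beta}{dt}$; with $\beta(t)=\beta_0e^{-\lambda_\beta t}$ the sign you wrote would make that coefficient strictly negative. Your subsequent choice of $\lambda_\beta$ is nonetheless the correct one for the corrected sign, so this is a typo rather than a gap.
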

\begin{proof}
    The only difference lies in how we deal with the term 
    \[\langle G(x,p,u)-G(y,q,v),u-v\rangle.\]
    Observe that there is no difficulty in adapting the proof of Theorem \ref{theorem:existence b(x,p,u)} as for any $t\in[0,1]$, 
    \[\langle G(x,p,u)-G(y,q,v),u-v\rangle\leq \frac{1}{2}|G(x,p,tu+(1-t)v)-G(y,q,tu+(1-t)v)|^2+\left(\frac{1}{2}+\|D_uG\|_\infty\right)|u-v|^2. \]
\end{proof}
\begin{remarque}
The addition of this $t\in[0,1]$ in the monotonicity condition might seems cryptic at first. Essentially it is a way of translating the fact that we do not need strong monotonicity along the variable associated to $U$.
\end{remarque}

Similarly, the gradient estimate of Proposition \ref{prop:estimate} may be stated under weaker monotonicity assumptions. Following the proof it is evident that we only really need $(G,F,b)$ to satisfy
\[\forall \nu\in\reels^{2d+m}\quad \nu^T\left(\begin{array}{ccc} D_x G\quad D_u G\quad D_p G\\ D_x F \quad D_u F\quad D_pF\\2AD_x b\quad 2AD_ub\quad 2AD_p b\end{array}\right)\nu\geq \alpha \left|\left(\begin{array}{ccc} D_x G& 0 &0\\ 0 & 0 &0\\0 &0 &D_xb\end{array}\right)\nu\right|^2,\]
for some $\alpha>0$ and a symmetric matrix $A$. Obviously, this condition is not equivalent to the one we stated in Corollary \ref{corol: weaker monotonicity}. We believe the reason for that to be sligtly technical, indeed in general condition \eqref{condition U_0 V lineaire } 
\[\langle U_0(t,x,p)-U_0(t,y,q),x-y\rangle+\langle p-q,A,p-q\rangle \geq \alpha |U_0(t,x,p)-U_0(t,y,q)|^2\]
and \eqref{U0 V lineaire diff} 
\[\forall (\xi_1,\xi_2)\quad \left(\begin{array}{c}\xi_1\\ \xi_2\end{array}\right)^T\left(\begin{array}{cc} D_xU_0\quad D_p U_0\\ 0\quad A\\\end{array}\right)\left(\begin{array}{c}\xi_1\\ \xi_2\end{array}\right)\geq \alpha|D_x U_0\cdot \xi_1|^2,\]
are not equivalent even though we believe them both to be natural assumptions. They are however both sufficient condition  to recover existence and uniqueness of solution to the master equation under the strong monotonicity condition \eqref{(G,F,Ab)}. Indeed notice that that whenever \eqref{U0 V lineaire diff} is fulfilled for some $(\alpha,A)$, for any $\varepsilon>0$ there exist an $\alpha_\varepsilon>0$ such that \eqref{condition U_0 V lineaire } holds for the couple $(\alpha_\varepsilon,\varepsilon I_d+A)$. The idea being that whenever strong monotonicity of $(G,F,Ab)$ holds, the addition of this small perturbation $\varepsilon$ is non consequential and we may recover bounds on the gradient as in the proof of Theorem \ref{theorem:existence b(x,p,u)} without much additional difficulty.

\color{black}
\subsubsection{A uniqueness result}
Let us first present a uniqueness property. It may seem surprising that we are now concerned with uniqueness while we insisted that it always holds in the class of Lipschitz solutions. Even though in this context uniqueness follows from the Lipschitz regularity of solutions we provide a proof of uniqueness to indicate why developments similar to monotone solutions \cite{JEP_2021__8__1099_0} are also immediate for equation \eqref{MFG b(x,p,u)}. Monotone solutions only require the continuity of the value function U and yields uniqueness and stability properties under monotonicity assumptions on the coefficients. 

\begin{lemma}
\label{lemma: uniqueness b(x,p,U)}
Assume that $U_0$ and $(G,F,b)$ satisfy:
\[
\begin{array}{c}
\exists A\in\mathcal{M}(\reels^m),\quad \forall (x,y,u,v,p,q)\in \Omega^2\times \reels^{2d}\times \reels^{2m},\\
\\
\langle U_0(x,p)-U_0(y,q),x-y\rangle+\langle p-q,A(p-q)\rangle\geq 0,\\
\\
\langle G(x,p,u)-G(y,q,v),x-y\rangle+\langle F(x,p,u)-F(x,q,v),u-v\rangle+\langle b(x,p,u)-b(y,q,v),A(p-q)\rangle \\
   \geq 0.
\end{array}
\]
Then there exists at most one smooth solution of the master equation \eqref{MFG b(x,p,u)}.
\end{lemma}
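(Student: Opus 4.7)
The plan is to mirror the doubling-of-variables construction used in Lemma \ref{lemma: Z supersol b(x,p,U)} and Theorem \ref{theorem:existence b(x,p,u)}, but now comparing two distinct smooth solutions rather than values of a single one at two points. Given two smooth solutions $U_1,U_2$ of \eqref{MFG b(x,p,u)} sharing the initial datum $U_0$, I would introduce
\[
\Phi(t,x,y,p,q) \;:=\; \langle U_1(t,x,p) - U_2(t,y,q),\, x-y\rangle + \langle p-q,\, A(p-q)\rangle.
\]
The first monotonicity assumption on $U_0$ gives $\Phi(0,\cdot)\geq 0$ pointwise. Notice that no $\beta$-correction is needed here, in contrast to Lemma \ref{lemma: Z supersol b(x,p,U)}, because only the sign $\Phi\geq 0$ is sought, not a quantitative bound on $|U_1-U_2|^2$.

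The heart of the argument is to derive a PDE for $\Phi$. Since both $U_1,U_2$ are smooth, direct differentiation combined with substitution of the equations for $\partial_t U_i$ should yield, after a routine but tedious index computation, the identity
\begin{align*}
&\bigl(\partial_t + F^1\!\cdot\!\nabla_x + F^2\!\cdot\!\nabla_y + b^1\!\cdot\!\nabla_p + b^2\!\cdot\!\nabla_q - \sigma\,\mathrm{Tr}(B\, D^2_{(p,q)})\bigr)\Phi\\
&\qquad = \langle G^1-G^2,\,x-y\rangle + \langle F^1 - F^2,\, U_1 - U_2\rangle + 2\langle A(p-q),\, b^1 - b^2\rangle,
\end{align*}
where $F^i, G^i, b^i$ denote the couplings evaluated along $U_i$ and $B = \left(\begin{array}{cc}I_m & I_m\\ I_m & I_m\end{array}\right)$. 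The block $B$ plays exactly the role it did in Lemma \ref{lemma: Z supersol b(x,p,U)}: it reflects driving the two $p$-processes by a common Brownian motion, and it is precisely the choice that makes the two positive $\mathrm{Tr}(A)$ contributions coming from $\sigma\Delta_p$ and $\sigma\Delta_q$ acting on $\langle p-q, A(p-q)\rangle$ cancel against the cross term $2\sigma\,\mathrm{Tr}(D^2_{pq}\Phi)$, leaving only terms that vanish in the final identity. By the second monotonicity assumption, the right-hand side is $\geq 0$.

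Since $B\succeq 0$, the operator on the left is degenerate parabolic, so $\Phi$ is a classical super-solution of the linear equation obtained by setting the right-hand side to zero, and it has non-negative initial data. A comparison principle (such as Theorem~2.5 of \cite{34bed8a7-6223-3f55-b20d-c7daa21c8c15}, used as in Theorem \ref{theorem:existence b(x,p,u)}) then yields $\Phi\geq 0$ on $[0,\infty)\times\Omega^2\times\reels^{2m}$. To conclude, I would specialize: setting $q=p$ and $y = x+\varepsilon e_i$ turns $\Phi\geq 0$ into
\[
-\varepsilon\,\bigl[U_1^i(t,x,p) - U_2^i(t,\, x+\varepsilon e_i,\,p)\bigr]\;\geq\; 0\qquad\forall\varepsilon\in\reels,
\]
and dividing by $\varepsilon$, letting $\varepsilon\to 0^+$ and then $\varepsilon\to 0^-$ (using continuity of $U_2$ in $y$), forces $U_1^i(t,x,p) = U_2^i(t,x,p)$ for every coordinate $i$, hence $U_1\equiv U_2$. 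The main obstacle is the middle identity: one must verify both the $\mathrm{Tr}(A)$ cancellation enabled by the $B$-trick, and that all Laplacian, transport and cross terms collapse to exactly the quantity appearing in the monotonicity hypothesis---with no stray $|\nabla U|^2$ or $|U_1-U_2|^2$ remainder, which is what permits working at the level of the bare non-negativity stated in the lemma. Minor additional care is needed to apply the comparison principle on the unbounded $(p,q)$-directions, but $\Phi$ has at most quadratic growth and this is standard.
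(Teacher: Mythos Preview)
Your proposal is correct and follows essentially the same route as the paper: define the doubled-variable auxiliary function $\Phi$ (the paper calls it $Z$), derive the linear degenerate-parabolic equation with the block matrix $B$ to effect the $\mathrm{Tr}(A)$ cancellation, use the monotonicity hypothesis to make the right-hand side non-negative, apply the comparison principle, and conclude by specializing to $p=q$, $y=x+\varepsilon\xi$. The only cosmetic difference is that you test against coordinate directions $e_i$ whereas the paper tests against all unit vectors $\xi$; both are equivalent.
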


\begin{proof}
Suppose there exists two smooth solutions $U$ and $V$ on $[0,T]$ for some $T>0$ and take
\[Z(t,x,y,p,q)=\langle U(t,x,p)-V(t,y,q),x-y\rangle +\langle p-q,A,p-q\rangle.\]
First, the assumption formulated on $U_0$ ensure that $Z_0\geq 0$. Since U and V are smooth, so is $Z$. Using the equations satisfied by $U$ and $V$, we get that $Z$ is solution of 
\begin{align*}
&\partial_t Z+F^x\cdot\nabla_x Z+F^y\cdot\nabla_y Z+b^x\cdot \nabla_p Z+b^y\cdot \nabla_q Z-\sigma Tr(BD^2_{(p,q)} Z)\nonumber\\
&=\langle G^x-G^y,x-y\rangle+\langle F^x-F^y,U^x-V^y\rangle+\langle 2A(b^x-b^y),p-q\rangle,
\end{align*}
where we used the notation $G^x=G(x,p,U(t,x,p))$, $G^y=G(y,q,V(t,y,q))$, similarly for $F^x,F^y,b^x,b^y$ and $B=\left(\begin{array}{cc} I_{m}\quad I_{m}\\ I_{m}\quad I_{m}\\\end{array}\right)$.\\
Finally, the monotonicity condition on $(G,F,Ab)$ implies (recall that $Z$ is smooth) that $Z$ satisfies
\[
\left\{
\begin{array}{c}
\partial_tZ+F^x\cdot \nabla_x Z+F^y\nabla_y Z+b^x\nabla_p Z+b^y\nabla_q Z-\sigma Tr(BD^2_{(p,q)} Z)\geq 0\quad  \forall t\in(0,T),\\
Z_{t=0}\geq 0.
\end{array}
\right. 
\]
By a comparison principle with 0 we deduce that $Z\geq 0$. Fixing $p=q\in\reels^m$ yields that for all $t \geq 0, x,y \in \Omega$
\[\langle U(t,x,p)-V(t,y,p),x-y\rangle \geq 0.\]
Take $x\in \overset{\circ}{\Omega}$ and set $y=x+\varepsilon\xi$ with $\|\xi\|=1$ such that for $\varepsilon$ sufficiently small $y\in \Omega$. Dividing by $\varepsilon$ and taking the limit as $\varepsilon$ tends to $0$ in the previous inequality gives
\[ \forall \|\xi\|\leq 1 \quad \langle U(t,x,p)-V(t,x,p),\xi \rangle \geq 0,\]
which implies $U(t,x,p) = V(t,x,p)$. By continuity the equality holds for any $x \in \Omega$.
\end{proof}

\subsubsection{Recovering monotonicity estimates under different assumptions}

It was first observed in \cite{P_L_Lions} that we may trade the $\alpha-$monotonicity of $U_0$ against stronger assumptions on the coefficients of the equation. Following this idea we state the following Theorem.
\begin{thm}
\label{estimate alternatif}
    Suppose $(U_0,G,F,b)$ is Lipschitz continuous. Further assume that
    \begin{align}
\nonumber \forall (x,y,p,q,&u,v)\in \Omega^2\times \reels^{2m}\times \reels^{2d},\\
\label{U0 A monotone }\langle U_0(x,p)-U_0(y,q),x&-y\rangle+\langle p-q,A(p-q)\rangle\geq 0,\\
\nonumber\\
\nonumber\langle G(x,p,u)-G(y,q,v),x-y\rangle+\langle F(x,&p,u)-F(x,q,v),u-v\rangle+\langle b(x,p,u)-b(y,q,v),2A(p-q)\rangle \\
\label{(G,F,Ab) en u}    &  \geq \alpha |u-v|^2.
\end{align}
Then there exists a global Lipschitz solution.
\end{thm}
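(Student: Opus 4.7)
The strategy mirrors the proof of Theorem~\ref{theorem:existence b(x,p,u)}: the general local-existence theory for Lipschitz solutions (the analog of Theorem~\ref{lip sol b(p)} adapted as in Section~\ref{sec:non-autonomous}) produces a maximal Lipschitz solution $U$ on some $[0,T_c)$, and the game is to rule out the blow-up of $\|D_{(x,p)}U(t,\cdot)\|_\infty$ on any $[0,T]$ with $T<T_c$. As before, this will follow from the non-negativity of a doubling-of-variables auxiliary function, propagated from $t=0$ by a viscosity comparison argument in the spirit of Lemma~\ref{lemma: Z supersol b(x,p,U)}.

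The critical new obstacle is that \eqref{(G,F,Ab) en u} only provides strong monotonicity in $u$, not in $(x,p)$. Consequently, the naive auxiliary function $Z = \langle U^x-U^y,x-y\rangle+\langle p-q,A(p-q)\rangle-\beta|U^x-U^y|^2$ used in the proof of Theorem~\ref{theorem:existence b(x,p,u)} is no longer workable: treating $-2\beta\langle G^x-G^y,U^x-U^y\rangle$ via Young's inequality and the Lipschitz regularity of $G$ introduces negative contributions in $|x-y|^2$ and $|p-q|^2$ that the source in Lemma~\ref{lemma: Z supersol b(x,p,U)} can no longer absorb. To compensate, I would work instead with the modified function
\[
\tilde Z(t,x,y,p,q)=\langle U^x-U^y,x-y\rangle+\langle p-q,A(p-q)\rangle -\beta|U^x-U^y|^2+\delta(t)\bigl(|x-y|^2+|p-q|^2\bigr),
\]
where $\beta>0$ is a small constant and $\delta(t)$ will be taken exponentially increasing. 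Because $\chi:=|x-y|^2+|p-q|^2$ is smooth and satisfies $Tr(BD^2_{(p,q)}\chi)=0$ for the matrix $B$ from Lemma~\ref{lemma: Z supersol b(x,p,U)}, the argument of that lemma extends without change: $\tilde Z$ is a viscosity supersolution of a linear equation whose source may be bounded from below, using \eqref{(G,F,Ab) en u}, by
\[
\alpha|U^x-U^y|^2-2\beta\langle G^x-G^y,U^x-U^y\rangle+\delta'\chi+2\delta\bigl[(F^x-F^y)(x-y)+(b^x-b^y)(p-q)\bigr].
\]

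Fix any $T<T_c$ and pick $\beta>0$ small, $\delta_0\ge \beta\|U_0\|_{Lip}^2$, and $\delta(t)=\delta_0 e^{\lambda t}$ with $\lambda$ large. The monotonicity hypothesis \eqref{U0 A monotone} together with $\delta_0\ge \beta\|U_0\|_{Lip}^2$ gives $\tilde Z\rvert_{t=0}\ge 0$. Expanding the cross-products via Young's inequality and the global Lipschitz bounds on $(G,F,b)$, the source in the supersolution inequality can be written as
\[
\bigl(\alpha-C_1\beta-C_2\delta(t)\bigr)|U^x-U^y|^2+\bigl(\delta'(t)-C_3\delta(t)-C_4\beta\bigr)\chi,
\]
with $C_i$ depending only on $\|(G,F,b)\|_{Lip}$. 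For $\lambda>C_3$ the coefficient of $\chi$ is non-negative for all times; for $\beta$ and $\delta_0$ small enough (both allowed to depend on $T$), the coefficient of $|U^x-U^y|^2$ remains non-negative on $[0,T]$. The comparison principle of Theorem~2.5 of \cite{34bed8a7-6223-3f55-b20d-c7daa21c8c15} then yields $\tilde Z\ge 0$ on $[0,T]\times \Omega^2\times \R^{2m}$.

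Setting $p=q$ in $\tilde Z\ge 0$ and splitting $\langle U^x-U^y,x-y\rangle$ with Young gives the estimate $\|D_xU(t,\cdot)\|_\infty^2\le \beta^{-2}+2\beta^{-1}\delta(T)$; setting $x=y$ and denoting by $\lambda_A$ the largest eigenvalue of $A$ gives $\|D_pU(t,\cdot)\|_\infty^2\le \beta^{-1}(\lambda_A+\delta(T))$. Both bounds are finite for every $T<T_c$, so by the blow-up alternative of the local existence theorem we must have $T_c=+\infty$. The main delicacy here is the simultaneous tuning of $(\beta,\delta_0,\lambda)$: shrinking $\beta$ makes the initial-time inequality easier but degrades the final Lipschitz bound, while enlarging $\delta(T)$ to absorb the cross-terms pushes in the opposite direction. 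The freedom to choose $\beta$ small enough on any finite window is precisely what makes this balancing possible, at the cost of a Lipschitz estimate which degenerates exponentially in $T$.
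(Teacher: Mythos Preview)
Your proposal is correct and follows essentially the same route as the paper: both add the penalization $\gamma(t)\bigl(|x-y|^2+|p-q|^2\bigr)$ to the auxiliary function of Lemma~\ref{lemma: Z supersol b(x,p,U)}, use that $Tr(BD^2_{(p,q)}\chi)=0$, and tune the time-profiles so that the source is non-negative. The only variation is cosmetic: the paper takes $\beta(t)$ proportional to $\gamma(t)=\gamma_0 e^{\lambda(t-T)}$ (so that $\gamma_0$, and hence the Lipschitz bound at $t=T$, can be chosen independently of $T$), whereas you keep $\beta$ constant and let $\delta(t)=\delta_0 e^{\lambda t}$, accepting a bound that degenerates exponentially in $T$---which, as you note, is still enough for the blow-up alternative.
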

\begin{proof}
We only provide a sketch of the proof as we believe the extension to be natural. Define 
\begin{align*}
Z(t,x,y,p,q)=\langle U(t,x,p)-U(t,y,q),&x-y\rangle+\langle p-q, A,p-q \rangle -\beta(t)|U(t,x,p)-U(t,y,q)|^2\\
&+\gamma(t)(|x-y|^2+|p-q|^2).
\end{align*}
By a proof similar to the one of Lemma \ref{lemma: Z supersol b(x,p,U)}, $Z$ is a viscosity supersolution of 
\begin{align*}
&\partial_t Z+F^x\cdot\nabla_x Z+F^y\cdot\nabla_y Z+b^x\cdot \nabla_p Z+b^y\cdot \nabla_q Z-\sigma Tr(BD^2_{(p,q)} Z)\nonumber\\
&\geq \langle G^x-G^y,x-y\rangle+\langle F^x-F^y,U^x-U^y\rangle+\langle 2A(b^x-b^y),p-q\rangle\\
&-2\beta \langle G^x-G^y,U^x-U^y\rangle-\frac{d\beta}{dt}|U^x-U^y|^2\nonumber\\
&+2\gamma(t)( \langle F^x-F^y,x-y\rangle+\langle (b^x-b^y),p-q\rangle)+\frac{d\gamma}{dt}(|x-y|^2+|p-q|^2).
\end{align*}
For $ \max(\|D_xU_0\|^2_\infty,\|D_pU_0\|^2_\infty)\beta_0\leq  \gamma_0$, \eqref{U0 A monotone } implies that $Z_{t=0}\geq 0$. Using \eqref{(G,F,Ab) en u} we know that if we can find a couple $(\beta,\gamma)$ such that
\[
\left\{
\begin{array}{c}
\frac{d\gamma}{dt}\geq 2\gamma (\|F\|_{Lip}+\|b\|_{Lip})+\beta \|G\|_{Lip},\\
\alpha \geq \frac{d\beta}{dt}+2\beta \|G\|_{Lip}+\gamma (\|F\|_{Lip}+\|b\|_{Lip}),
\end{array}
\right.
\]
on $(0,T)$ for the Lipschitz semi-norm $\|F\|_{Lip}=\|D_xF\|_\infty+\|D_pF\|_\infty+\|D_uF\|_\infty$, then $Z$ will be a viscosity supersolution of 
\[\partial_t Z+F^x\cdot\nabla_x Z+F^y\cdot\nabla_y Z+b^x\cdot \nabla_p Z+b^y\cdot \nabla_q Z-\sigma Tr(BD^2_{(p,q)} Z)\geq 0\]
on $(0,T)\times\Omega^2\times \R^{2m}$. To find such a couple, it suffices to take $\beta(t)=\frac{1}{\max(\|D_xU_0\|^2_\infty,\|D_pU_0\|^2_\infty)}\gamma(t)$ with 
\[\gamma(t)=\gamma_0 e^{\lambda(t-T)},\]
for $\lambda$ sufficiently big and $\gamma_0$ sufficiently small chosen independently of $T$. We may then recover $Z\geq 0$ on $[0,T)$ by a comparison principle, which of course implies an estimate on the Lipschitz norm of $U$ in $(x,p)$. 
\end{proof}

\subsubsection{The case of a non-constant volatility} \label{section: vol}
Let us also comment on monotonocity estimates for the more general equation
\begin{align}
\label{MFG b(x,p,u) vol}
    \begin{array}{c}
      \partial_t U+F(x,p,U)\cdot \nabla_x U+b(x,p,U)\cdot \nabla_p U-Tr(\Gamma(x,p,U)D^2_pU) = G(x,p,U) \text{ in }(0,T)\times\Omega\times\reels^{m}, \\
      U(0,x,p)=U_0(x,p) \text{ in }\Omega\times\reels^{m},
    \end{array}
\end{align}
for $\Gamma(x,p,U)=\Sigma(x,p,U)\Sigma^T(x,p,U)$ with $\Sigma: \Omega\times \reels^m\times \reels^d\to \mathcal{M}_m(\reels)$. Let us first remark that when $\Sigma$ is Lipschitz in all variables, we may still define a Lipschitz solution of equation \eqref{MFG b(x,p,u) vol}. Let us now state a global existence result for Lipschitz solution of this equation

\begin{prop}
Suppose $U_0,F,G,b,\Sigma$ are Lipschitz continuous in all variables. If they furthermore satisfy the following
\begin{align}
\nonumber &\forall (x,y,p,q,u,v)\in \Omega^2\times \reels^{2m}\times \reels^{2d}\\
\nonumber &\langle U_0(x,p)-U_0(y,q),x-y\rangle+\langle p-q,A(p-q)\rangle\geq \alpha|U_0(x,p)-U_0(y,q)|^2,\\
\nonumber&\langle G(x,p,u)-G(y,q,v),x-y\rangle+\langle F(x,p,u)-F(x,q,v),u-v\rangle+\langle b(x,p,u)-b(y,q,v),2A(p-q)\rangle, \\
\label{(G,F,Ab) vol}    &  \geq \alpha (|x-y|^2+|p-q|^2)+Tr\left( \left(\Sigma(x,p,u)-\Sigma(y,q,v)\right)^T 2A \left(\Sigma(x,p,u)-\Sigma(y,q,v)\right)\right),
\end{align}
for a matrix $A\in\mathcal{M}_m(\reels)$, then there exists a global Lipschitz solution of equation \eqref{MFG b(x,p,u) vol} on any time interval. 
\end{prop}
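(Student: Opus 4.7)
The plan is to mirror the strategy used for Theorem \ref{theorem:existence b(x,p,u)}: define the same doubled-variable auxiliary function, show it is a viscosity supersolution of a suitable linear parabolic inequality, use the comparison principle to deduce its non-negativity, and read off uniform Lipschitz estimates that contradict the blow-up alternative provided by the local existence theory. The only novelty with respect to the constant volatility case is how Itô's formula generates an extra trace correction, which is exactly what the right-hand side of \eqref{(G,F,Ab) vol} is designed to absorb.

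Concretely, I would fix a Lipschitz solution $U$ defined on a maximal interval $[0,T_c)$ and, for $\beta(t)=\beta_0 e^{-\lambda t}$ with $\beta_0\leq \alpha$ to be fixed, define
\[
Z(t,x,y,p,q) = \langle U(t,x,p)-U(t,y,q), x-y\rangle + \langle p-q, A(p-q)\rangle - \beta(t)|U(t,x,p)-U(t,y,q)|^2.
\]
The hypothesis on $U_0$ ensures $Z_{|t=0}\geq 0$. The key step is an adaptation of Lemma \ref{lemma: Z supersol b(x,p,U)}: one runs the two characteristic systems driven by the \emph{same} Brownian motion $W$,
\[
dp_s = -b(X_s,p_s,U^{x,p_0}_s)ds + \sqrt{2}\,\Sigma(X_s,p_s,U^{x,p_0}_s)dW_s, \qquad dq_s = -b(Y_s,q_s,U^{y,q_0}_s)ds + \sqrt{2}\,\Sigma(Y_s,q_s,U^{y,q_0}_s)dW_s,
\]
and combines the dynamic programming principle for $\langle U^x-U^y, x-y\rangle$, the Jensen-type inequality for $|U^x-U^y|^2$, and Itô's formula applied to $\langle p-q, A(p-q)\rangle$. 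Because the Brownian motion is common, the quadratic variation of $p-q$ is $2(\Sigma^x-\Sigma^y)(\Sigma^x-\Sigma^y)^T ds$, which produces an extra Itô term $2\,\mathrm{Tr}((\Sigma^x-\Sigma^y)^T A(\Sigma^x-\Sigma^y))$ along the trajectory. Simultaneously, the generator of the coupled diffusion on $(p,q)$ is $\mathrm{Tr}(\tilde{\mathcal{A}}\,D^2_{(p,q)}\cdot)$ with the $(2m)\times(2m)$ positive semidefinite block matrix
\[
\tilde{\mathcal{A}} = \begin{pmatrix} \Sigma^x(\Sigma^x)^T & \Sigma^x(\Sigma^y)^T \\ \Sigma^y(\Sigma^x)^T & \Sigma^y(\Sigma^y)^T \end{pmatrix} = \begin{pmatrix}\Sigma^x \\ \Sigma^y\end{pmatrix}\begin{pmatrix}(\Sigma^x)^T & (\Sigma^y)^T\end{pmatrix},
\]
which is what makes the resulting inequality amenable to a comparison principle in the viscosity sense. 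The conclusion of this step is that $Z$ is a viscosity supersolution of
\[
\partial_t Z + F^x\cdot\nabla_x Z + F^y\cdot\nabla_y Z + b^x\cdot\nabla_p Z + b^y\cdot\nabla_q Z - \mathrm{Tr}(\tilde{\mathcal{A}}\, D^2_{(p,q)} Z) \geq R(t,x,y,p,q,U),
\]
where $R$ collects $\langle G^x-G^y,x-y\rangle + \langle F^x-F^y,U^x-U^y\rangle + \langle 2A(b^x-b^y),p-q\rangle - 2\,\mathrm{Tr}((\Sigma^x-\Sigma^y)^T A(\Sigma^x-\Sigma^y)) - 2\beta\langle G^x-G^y,U^x-U^y\rangle - \beta'|U^x-U^y|^2$.

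Hypothesis \eqref{(G,F,Ab) vol} is tailored so that the first three terms of $R$ minus the trace term is bounded below by $\alpha(|x-y|^2+|p-q|^2)$. Fixing $\beta_0\leq \alpha$ and choosing $\lambda$ larger than a constant depending on $\|D_u G\|_\infty$, $\|D_x G\|_\infty^2$, $\|D_p G\|_\infty^2$, one makes $R\geq 0$, so $Z$ is a viscosity supersolution of a linear equation with degenerate elliptic structure and with non-negative data at $t=0$. The comparison principle of \cite{34bed8a7-6223-3f55-b20d-c7daa21c8c15} then gives $Z\geq 0$ on $[0,T_c)\times\Omega^2\times\R^{2m}$. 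Specializing to $p=q$ yields a uniform bound on $\|D_x U\|_\infty$ via $\langle U^x-U^y, x-y\rangle \geq \beta(t)|U^x-U^y|^2$, and the estimate $\|D_p U\|_\infty \leq 2\sqrt{\|D_x U\|_\infty\,\lambda_A}$ follows as in the proof of Theorem \ref{theorem:existence b(x,p,u)} by taking $x=y+\varepsilon\xi$ and sending $\varepsilon\to 0$. These uniform-in-$t$ bounds preclude blow-up, so $T_c=+\infty$.

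The main obstacle is the careful bookkeeping in the viscosity-level derivation of the supersolution inequality for $Z$ under the coupled diffusion. The constant-$\sigma$ argument of Lemma \ref{lemma: Z supersol b(x,p,U)} extends without structural change, but one must verify that writing the Itô correction of $\langle p-q,A(p-q)\rangle$ as $2\,\mathrm{Tr}((\Sigma^x-\Sigma^y)^T A(\Sigma^x-\Sigma^y))$ and identifying the coupled generator as $\mathrm{Tr}(\tilde{\mathcal{A}}\, D^2_{(p,q)}\cdot)$ is compatible with the test-function argument (which only uses boundedness of $\Sigma$, $F$, $G$, $b$ and dominated convergence on small stopping-time windows). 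Once this is done, the calibration of $\beta$ and $\lambda$, and the subsequent comparison principle and gradient extraction, are essentially unchanged from the autonomous case.
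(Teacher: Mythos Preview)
Your proposal is correct and follows essentially the same route as the paper's proof: the paper also defines the same $Z$, identifies the coupled second-order operator via the block matrix $\Lambda=\tilde{\mathcal{A}}$ (your notation), isolates the extra It\^o correction $-2\,\mathrm{Tr}\big((\Sigma^x-\Sigma^y)^T A(\Sigma^x-\Sigma^y)\big)$, and absorbs it using \eqref{(G,F,Ab) vol} before calibrating $\beta$ and applying the comparison principle exactly as you describe. The only presentational difference is that the paper first writes the equation satisfied by $Z$ for smooth $U$ and then rewrites the diffusion part through the cross-derivative identity $D^2_{pq}Z=-2A+2\beta(D_pU^x)^T D_qU^y$, whereas you obtain the same block diffusion matrix directly from the generator of the coupled SDE; the resulting viscosity supersolution inequality and the remainder of the argument are identical.
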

\begin{proof}
We only sketch the proof as it follows from the one of Theorem \ref{theorem:existence b(x,p,u)}. Defining $Z$ as in this Theorem, for a smooth $U$, it is a solution of
\begin{align*}
&\partial_t Z+F^x\cdot\nabla_x Z+F^y\cdot\nabla_y Z+b^x\cdot \nabla_p Z+b^y\cdot \nabla_q Z-Tr(\Gamma^xD^2_p Z)-Tr(\Gamma^y D^2_qZ)\nonumber\\
&=\langle G^x-G^y,x-y\rangle+\langle F^x-F^y,U^x-U^y\rangle+\langle 2A(b^x-b^y),p-q\rangle\\
&-2\beta \langle G^x-G^y,U^x-U^y\rangle-\frac{d\beta}{dt}|U^x-U^y|^2\nonumber\\
&+2\beta\left(|\Sigma^x D_p U^x|^2+|\Sigma^y D_q U^y|^2\right)-2Tr((\Gamma^x+\Gamma^y)A),
\end{align*}
with the notation $\displaystyle |M|^2=Tr(M^TM)=\sum_{i,j} M_{ij}^2$ for a square matrix and the same notation $U^x,F^x$ and so on we used in earlier proofs. Remark that
\[D_p (D_qZ)=-2A+2\beta (D_pU^x)^TD_qU^y.\]
We are also going to need the formula
\[(\Gamma^x+\Gamma^y)A=(\Sigma^x-\Sigma^y)(\Sigma^x-\Sigma^y)^T A+\left(\Sigma^x(\Sigma^y)^T+\Sigma^y(\Sigma^x)^T\right)A.\]
Thanks to those two equations, we may rewrite the PDE satisfied by $Z$ as 
\begin{align*}
&\partial_t Z+F^x\cdot\nabla_x Z+F^y\cdot\nabla_y Z+b^x\cdot \nabla_p Z+b^y\cdot \nabla_q Z-Tr(\Lambda D^2_{(p,q)}Z)\nonumber\\
&=\langle G^x-G^y,x-y\rangle+\langle F^x-F^y,U^x-U^y\rangle+\langle 2A(b^x-b^y),p-q\rangle\\
&-2\beta \langle G^x-G^y,U^x-U^y\rangle-\frac{d\beta}{dt}|U^x-U^y|^2\nonumber\\
&+2\beta |\Sigma^x D_pU^x-\Sigma^yD_qU^y|^2-2Tr\left((\Sigma^x-\Sigma^y)^T A(\Sigma^x-\Sigma^y))\right),
\end{align*}
for $\Lambda=\left(\begin{array}{cc} \Gamma^x\quad \Sigma^x(\Sigma^y)^T\\ \Sigma^y(\Sigma^x)^T\quad \Gamma^y\\\end{array}\right)\geq 0.$

Adapting Lemma \ref{Z supersol (p,q)}, we may prove that when $U$ is not smooth, $Z$ is still a viscosity supersolution of 
\begin{align*}
&\partial_t Z+F^x\cdot\nabla_x Z+F^y\cdot\nabla_y Z+b^x\cdot \nabla_p Z+b^y\cdot \nabla_q Z-Tr(\Lambda D^2_qZ)\nonumber\\
&\geq \langle G^x-G^y,x-y\rangle+\langle F^x-F^y,U^x-U^y\rangle+\langle 2A(b^x-b^y),p-q\rangle\\
&-2\beta \langle G^x-G^y,U^x-U^y\rangle-\frac{d\beta}{dt}|U^x-U^y|^2\nonumber-2Tr\left((\Sigma^x-\Sigma^y)^T A(\Sigma^x-\Sigma^y))\right).
\end{align*}
It then only remains to use the assumption \eqref{(G,F,Ab) vol} and to conclude for a well chosen $\beta$. 
\end{proof}

\begin{remarque}
    This shows that when $\Sigma$ depends on $(x,p)$ only, there always exists a global Lipschitz solution under Hypothesis \ref{hyp:b(x,p,U)} for $\|\Sigma\|_{Lip}$ sufficiently small. 
\end{remarque}
\subsubsection{Stationary Master equation}
We could add a discount term $\lambda U$ in equation \eqref{MFG b(x,p,u)} and the analysis would stay unchanged. In fact such term is regularizing for the equation as soon as $\lambda>0$. However the presence of such factor seems to be a necessary condition to be able to talk about steady states of equation \eqref{MFG b(x,p,u)} and the associated hypoelliptic PDE
\begin{equation}
\label{stationary MFG b(x,p,u)}
\begin{array}{c}
\lambda U+F(x,p,U)\cdot \nabla_x U+b(x,p,U)\cdot \nabla_p U-\sigma \Delta_p U=G(x,p,U) \text{ for }   x \in \Omega,p \in\R^m.
\end{array}
\end{equation}
Without the additional variable $p$, Equation \eqref{stationary MFG b(x,p,u)} was studied in \cite{JEP_2021__8__1099_0}. Let us also briefly mention that this equation has intrinsic significance independently of its interpretation as the large time limit of the solution of its parabolic counterpart. Indeed, it has been observed that the mean field limit of general economic equilibrium may fall into this category of equation \cite{bertucci2023singular,bertucci2020mean,achdou2022class}.
Just as we did in the proof of Lemma \ref{lemma: uniqueness b(x,p,U)}, it is possible to obtain uniqueness of solutions to this equation by using the auxiliary function
\[Z(x,p)=\langle U(x,p)-U(y,q),x-y\rangle+\langle A(p-q),p-q\rangle.\]
We do not details the proof all over again as it follows naturally from previously introduced arguments. However, contrary to the parabolic case, there seems to be no other way than looking at the equation satisfied by the gradient of U to get Lipschitz estimates. We believe this may be done without much difficulties by taking inspiration from \cite{JEP_2021__8__1099_0} and Proposition \ref{prop:estimate} so we do not prove it here. Let us however state this result for the associated parabolic PDE and explain how it relates to long time convergence. 
\begin{lemma}
\label{lemma: boundedness lambda}
    Under hypothesis \ref{hyp:b(x,p,U)} there exist a Lipschitz solution U of the following equation
    \begin{equation}
\label{MFG b(x,p,u) lambda}
\left\{
\begin{array}{c}
\partial_t U+\lambda U+F(x,p,U)\cdot \nabla_x U+b(x,p,U)\cdot \nabla_p U-\sigma \Delta_p U=G(x,p,U) \text{ for }  t\in(0,T), x \in \Omega,p \in\R^m, \\
U(0,x,p)=U_0(x,p) \text{ for } (x,p)\in \Omega\times \reels^m,
\end{array}
\right. 
\end{equation}
on any time interval. Moreover, if we assume that $\lambda\geq 2\|D_uG\|_\infty$, that $U_0$ is bounded and that $G$ has linear growth in $U$ then there exist a constant $C$ which depends on the data of the problem such that
\[\forall t<+\infty \quad  \|U(t,\cdot)\|_\infty+\|(D_xU,D_pU)(t,\cdot)\|_\infty\leq C.\]
\end{lemma}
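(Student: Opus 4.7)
The proof splits into three parts: global existence of a Lipschitz solution, a time-uniform $L^\infty$ bound on $U$, and a time-uniform Lipschitz bound on $U$. The discount term $\lambda U$ acts as a dissipation mechanism that makes the last two parts possible.

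For \textbf{existence}, the new equation is nothing but \eqref{MFG b(x,p,u)} with $G$ replaced by $\tilde{G}(x,p,u):=G(x,p,u)-\lambda u$. Lemma \ref{lemma: Z supersol b(x,p,U)} then says that $Z$ defined as in \eqref{def Z (p,q)} is a viscosity supersolution of \eqref{Z supersol (p,q)}, whose right-hand side now carries the additional contributions $-\lambda\langle U^x-U^y,x-y\rangle$ and $+2\beta(t)\lambda|U^x-U^y|^2$ produced by $\tilde{G}$. The first is bounded below by $-\tfrac{\alpha}{2}|x-y|^2-\tfrac{\lambda^2}{2\alpha}|U^x-U^y|^2$ via Young's inequality, and the second is nonnegative. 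Choosing $\beta(t)=\beta_0 e^{-\lambda_\beta t}$ with $\beta_0\leq \alpha/2$ and $\lambda_\beta$ large enough (depending on $\alpha$, $\lambda$ and the Lipschitz constants), a comparison principle gives $Z\geq 0$ on every $[0,T]$, hence a global Lipschitz solution, exactly as in Theorem \ref{theorem:existence b(x,p,u)}.

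For the \textbf{$L^\infty$ bound}, I would use the Feynman--Kac representation valid for Lipschitz solutions of \eqref{MFG b(x,p,u) lambda}:
\[U(t,x,p)=\mathbb{E}\!\left[e^{-\lambda t}U_0(X_t,p_t)+\int_0^t e^{-\lambda s}\,G(X_s,p_s,U(t-s,X_s,p_s))\,ds\right].\]
Setting $M(t):=\|U(t,\cdot)\|_\infty$ and using the linear growth $|G(x,p,u)|\leq \|G(\cdot,0)\|_\infty+\|D_uG\|_\infty|u|$, this yields
\[M(t)\leq e^{-\lambda t}\|U_0\|_\infty+\frac{\|G(\cdot,0)\|_\infty}{\lambda}+\|D_uG\|_\infty\int_0^t e^{-\lambda s}M(t-s)\,ds,\]
and since $\lambda\geq 2\|D_uG\|_\infty$, a Gronwall-type argument gives $M(t)\leq M_\infty$ uniformly in $t$.

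The \textbf{main difficulty} is the time-uniform Lipschitz bound. I would rerun the comparison argument of Theorem \ref{theorem:existence b(x,p,u)} with the constant choice $\beta(t)\equiv\beta_0$. The term $-\beta'|U^x-U^y|^2$ then vanishes while the discount term still contributes $+2\beta_0\lambda|U^x-U^y|^2$. Estimating $-\lambda\langle U^x-U^y,x-y\rangle$ by Young and $-2\beta_0\langle G^x-G^y,U^x-U^y\rangle$ by the Lipschitz continuity of $G$, the supersolution inequality reduces to the nonnegativity of a quadratic form in $(|x-y|,|p-q|,|U^x-U^y|)$ whose coefficient on $|U^x-U^y|^2$ is of the order $2\beta_0\lambda-\beta_0(2\|D_uG\|_\infty+\|D_xG\|_\infty^2+\|D_pG\|_\infty^2)-\lambda^2/(2\alpha)$. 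The hypothesis $\lambda\geq 2\|D_uG\|_\infty$ is precisely what allows the choice of a small enough $\beta_0=\beta_0(\alpha,\lambda,\text{data})$ to render this coefficient nonnegative, so that the comparison principle yields $Z\geq 0$ for all $t\geq 0$ with $\beta$ constant. This gives $\|D_xU(t,\cdot)\|_\infty\leq 1/\beta_0$ uniformly in $t$, and $\|D_pU(t,\cdot)\|_\infty$ follows from the same Young trick used at the end of the proof of Theorem \ref{theorem:existence b(x,p,u)}, the previously established $L^\infty$ bound on $U$ being needed to control the resulting cross term.
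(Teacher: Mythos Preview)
Your overall strategy matches the paper's: global existence via the machinery of Theorem~\ref{theorem:existence b(x,p,u)} applied to $\tilde G=G-\lambda u$, an $L^\infty$ bound via the representation formula and Gronwall, and a time-uniform Lipschitz bound by taking $\beta$ constant. The existence and $L^\infty$ parts are fine.

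The gap is in your uniform Lipschitz step. Bounding $-\lambda\langle U^x-U^y,x-y\rangle$ by Young produces the term $-\lambda^2/(2\alpha)\,|U^x-U^y|^2$, which does \emph{not} carry a factor of $\beta_0$. Hence the coefficient you write down,
\[
2\beta_0\lambda-\beta_0\bigl(2\|D_uG\|_\infty+\|D_xG\|_\infty^2+\|D_pG\|_\infty^2\bigr)-\tfrac{\lambda^2}{2\alpha},
\]
tends to $-\lambda^2/(2\alpha)<0$ as $\beta_0\to 0$, so ``choosing $\beta_0$ small enough'' cannot make it nonnegative; and $\beta_0$ cannot be taken large either because of the constraint $\beta_0\le\alpha$ from $Z|_{t=0}\ge 0$. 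The hypothesis $\lambda\ge 2\|D_uG\|_\infty$ plays no role in this computation.

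What the paper has in mind is different: do not estimate $-\lambda\langle U^x-U^y,x-y\rangle$ at all. Use instead the identity
\[
\langle U^x-U^y,x-y\rangle=Z-\langle p-q,A(p-q)\rangle+\beta_0|U^x-U^y|^2
\]
to rewrite it as $-\lambda Z+\lambda\langle p-q,A(p-q)\rangle-\beta_0\lambda|U^x-U^y|^2$. The term $-\lambda Z$ goes to the left-hand side, giving a supersolution inequality of the form $\partial_t Z+\lambda Z+(\text{transport})\ge(\cdots)$; the comparison with $0$ still applies since $+\lambda Z$ has the good sign. On the right-hand side the net $|U^x-U^y|^2$ contribution from the discount is now $+2\beta_0\lambda-\beta_0\lambda=+\beta_0\lambda$, and the nonnegative term $\lambda\langle p-q,A(p-q)\rangle$ is free. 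The condition $\lambda\ge 2\|D_uG\|_\infty$ then exactly kills the $-2\beta_0\|D_uG\|_\infty$ piece; the residual $D_xG,D_pG$ contributions are handled by redoing the Young step with a parameter that makes them $O(\beta_0^2/\alpha)$, after which taking $\beta_0$ small closes the argument. Finally, the $D_pU$ bound follows from $Z\ge0$ exactly as in Theorem~\ref{theorem:existence b(x,p,u)} and does not require the $L^\infty$ bound on $U$.
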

\begin{proof}
The Lipschitz estimate follows quite easily from the proof of Theorem \ref{theorem:existence b(x,p,u)}. It suffices to notice that because $\lambda\geq 2\|D_uG\|_\infty$ we may take a constant $\beta$ which gives exactly this uniform in time Lipschitz estimate. 
The bound on $\|U\|_\infty$ is obtained by mean of Gronwall Lemma whenever $U_0$ is bounded and $G$ has linear growth in $U$. 
\end{proof}
\begin{remarque}
Strong monotonicity conditions on $(F,G,b)$ in $(x,p)$ only as in Hypothesis \ref{hyp:b(x,p,U)} do not in general help for long time convergence: the condition $\lambda\geq2\|D_uG\|_\infty$ appears to be necessary to control the Lipschitz norm of $U$ uniformly in time if no strong monotonicity of the system is imposed in $U$. Consider the following example in dimension 1
    \begin{equation*}
    \left\{
        \begin{array}{c}
        \lambda>0,\\
             F(x,U)=-\gamma x,  \\
             G(x,U)=\alpha x+\gamma U.
        \end{array}
        \right.
    \end{equation*}
    Obviously for this choice
    \[\left(\begin{array}{cc}
         D_x G &D_u G  \\
         D_xF & D_uF
    \end{array}\right)\geq \alpha \left(\begin{array}{cc}
         1 &0  \\
         0 & 0
    \end{array}\right), \]
    in the order of positive semi-definite matrix. However, the solution of the associated PDE on $\reels$
    \[\partial_t U+\lambda U-\gamma x\cdot \nabla_x U=\alpha x+\gamma U,\]
    is given by 
    \[U(t,x)=e^{(\gamma-\lambda)t}U_0(xe^{\gamma t})+\frac{\alpha}{\lambda}\left(e^{(2\gamma-\lambda)t}-e^{(\gamma-\lambda)t}\right)x.\]
    It is evident in this particular example that there is no hope to recover uniform in time Lispchitz estimate unless the condition
    \[\lambda\geq 2\gamma,\]
    is fulfilled. Under stronger assumptions on the monotonicity of $(G,F,b)$ in $U$ the conclusion of Lemma \ref{lemma: boundedness lambda} may hold even for $\lambda=0$ (though we expect that $\lambda>0$ might be necessary to recover uniqueness of the limit). We do not mean to be exhaustive here and leave such developments to the interested reader. 
\end{remarque}
We believe this Lemma to be almost sufficient just by itself to conclude to the converge of a solution $U$ of \eqref{MFG b(x,p,u) lambda} to a solution of \eqref{stationary MFG b(x,p,u)} as time tends to infinity. Indeed it gives compactness of the sequence $(V_n)_{n\in\entiers}$ defined by
\[ V_n(t,x,p)=U(t+n,x,p) \text{ for } (t,x,p)\in [0,1]\times\Omega\times\reels^m.\]
Along a subsequence, $(V_n)$ converges locally uniformly to a Lipschitz bounded function $V$ which does not depend on time. Now, Lipschitz solution are not adapted, at least in the form we introduced them in this paper, to equation \eqref{stationary MFG b(x,p,u)}. However we believe viscosity like information to be sufficient to characterise the limit function $V$ as a weak solution of equation \eqref{stationary MFG b(x,p,u)} by adapting monotone solutions \cite{JEP_2021__8__1099_0} to equation \eqref{MFG b(x,p,u) lambda} with the help of Lemma \ref{lemma: uniqueness b(x,p,U)}. The property of stability enjoyed by those solution would guarantee that the limiting function $V$ is a monotone solution of \eqref{stationary MFG b(x,p,u)} while uniqueness would ensure the sequence $(V_n)$, and hence $U$, converges indeed to $V$.
\color{black}

\subsubsection{Other notions of monotonicity}
It is an observation of P.-L. Lions presented in  \cite{JEP_2021__8__1099_0} that instead of monotonicity of $U_0$ in $x$ we may ask for monotonicity along a function $\phi:\reels^d\to \reels^d$:
\[\langle U_0(x,p)-U_0(y,p),\phi(x)-\phi(y)\rangle \geq 0.\]
In which case, conditions \eqref{condition U_0 V lineaire } and \eqref{(G,F,Ab)} may become for instance
\[
\begin{array}{c}
\forall (x,y,p,q,u,v)\in \Omega^2\times\reels^{2m}\times \reels^{2d},\\
\\
      \langle U_0(x,p)-U_0(y,q),\phi(x)-\phi(y)\rangle+\langle p-q,A(p-q)\rangle \geq \alpha |U_0(x,p)-U_0(y,q)|^2,\\
      \\
     \begin{array}{c}\langle G(x,p,u)-G(y,q,v),\phi(x)-\phi(y)\rangle+\langle F(x,p,u)D\phi(x)-F(y,q,v)D\phi(y),u-v\rangle\\+\langle 2A(b(x,p,u)-b(y,q,b)),p-q\rangle\\ \geq \alpha \left(|x-y|^2+|p-q|^2\right).
     \end{array}
\end{array}
\]
We make this remark as it leads to estimates for a wider class of functions $(G,F,b,U_0)$. 

\subsubsection{Return on the heuristic we presented and link with FBSDE}
We previously mentioned without detailing much that we could use a wider class of function $V$ satisfying
\begin{equation*}
\left\{
\begin{array}{c} \\
     \partial_t V+F(x,p,U)\cdot\nabla_x V+b(x,p,U)\cdot\nabla_p V-\sigma\Delta_p V=Q(x,p,U,V) \text{ in } (0,T)\times\reels^d\times \reels^m,\\
     V(0,x,p)=V_0(x,p)\subset \reels^m \text{ in } \R^d\times \R^m,
\end{array}
\right. 
\end{equation*}
to 'complete' the master equation, instead of a linear function $V(t,x,p)=Ap$, so long as conditions \eqref{V0 monotone} and \eqref{V monotone fgb} hold for this choice of $V$. If $Q$ is Lipschitz continuous, we can construct a Lispchitz solution $V$ to this equation which is well defined so long as $U$ is Lipschitz continuous. Using this fact the analysis we provided in the special case of a linear function can be extended to a wider class without much difficulty. In particular we stressed out that $V$ could be taken as an additional unknown instead of an explicit choice. We are now going to give one such example, which is based entirely on the notion of G-monotonicity \cite{doi:10.1137/S0363012996313549}, and give another interpretation of those wellposedness conditions

\begin{exemple}
    Let $N\in \mathcal{M}_d(\reels)$ and $M\in \mathcal{M}_{d\times m}(\reels)$. Looking at the equation satisfied by $(\tilde{U},V)$ for $\tilde{U}=N U$ and $V=MU$ and using the heuristic we developed, we expect that under the following two conditions:

\begin{equation*}
\begin{array}{c}
\forall (x,y,p,q,u,v)\in \Omega^2\times \reels^{2m}\times \reels^{2d},\\
\\
     \langle U_0(X)-U_0(Y),\Gamma (X-Y)\rangle\geq 0 \quad X=\left(\begin{array}{c} x  \\ p \end{array}\right) \quad Y=\left(\begin{array}{c} y  \\ q \end{array}\right) \quad \Gamma=(N^T,M^T),\\
     \\
     \langle \Gamma(\tilde{F}(X,u)-\tilde{F}(Y,v)),u-v\rangle+\langle \Gamma^T(G(X,u)-G(Y,v)),X-Y\rangle\geq \alpha |u-v|^2 \quad \tilde{F}=\left(\begin{array}{c} F  \\ b \end{array}\right).
\end{array}
\end{equation*}
we can show existence of a Lipschitz solution on any time interval. In fact, even though there seem to be few results in the PDE literature on the subject, this set of conditions is well known in the field of forward backward stochastic differential equations as G-monotonicity \cite{doi:10.1137/S0363012996313549}. 
\end{exemple}
We are not presenting this example to show that letting $V=Ap$ was a poor choice. Rather we want to stress the following: There is no one general best choice of a function V. \\Indeed we believe the conditions for G-monotonicity to hold are equally as strong as the one we previously required. However, depending on the nature of the coefficients, one or the other notion of monotonicity might hold. When $(F,b)$ depends on $U$ only as in Example \ref{b(U) inversible} G-monotonicity seems better suited to the study of equation \eqref{MFG b(x,p,u)} as there is no chance of the necessary assumptions for Theorem \ref{theorem:existence b(x,p,u)} to hold. However when $G$ does not depend on $p$  but the other terms do, G-monotonicity will never be satisfied. Point being that the way to approach equation \eqref{MFG b(x,p,u)} seems to depends heavily on the nature of the different coefficients. Some monotonicity assumptions on $U_0$ and $(F,G,b)$ does however seems unavoidable to propagate Lipschitz regularity.

\begin{remarque}
We do not want to delve too much in the link between forward backward stochastic differential equations and semi-linear systems of PDE as this is beyond the scope of this paper (for more on the subject, we recommend \cite{FBSDEpartang} ). Let us just state that the existence of a Lipschitz solution of \eqref{MFG b(x,p,u)} gives an existence result for the following FBSDE:
\begin{equation}
\label{mfg FBSDE}
\left\{
\begin{array}{l}
   \displaystyle X_t=x_0-\int_0^t F(X_s,p_s,V_s)ds,\\
   \displaystyle p_t=p_0-\int_0^t b(X_s,p_s,V_s)ds+\sqrt{2\sigma}W_t, \\
   \displaystyle V_t=U_0(X_T,p_T)+\int_t^T G(X_s,p_s,V_s)ds-\int_t^T Z_sdW_s \quad \forall t\in[0,T],
\end{array}
\right.
\end{equation}
for $(W_t)_{t\geq 0}$ a Brownian motion on $\reels^m$ and for any initial condition $(x_0,p_0)$. \\
Let 
\[
\left\{
\begin{array}{l}
   \displaystyle Y_t=x_0-\int_0^t F(Y_s,q_s,U(T-s,Y_s,q_s))ds,\\
   \displaystyle q_t=p_0-\int_0^t b(Y_s,q_s,U(T-s,Y_s,q_s))ds+\sqrt{2\sigma}W_t, \\
\end{array}
\right.
\]
for a Lipschitz solution U. Denoting $U_t=U(T-t,Y_t,q_t)$, we know that $(M_t)_{t\in[0,T]}$ defined by 
\[M_t=U_t+\int_0^t G(Y_s,q_s,U_s)ds\]
is a $\mathcal{F}-$martingale for the natural filtration $\mathcal{F}$ associated to the Brownian motion $(W_t)_{t\geq 0}$. Since $(F,G,U,b)$ are all Lipschitz continuous, $M$ is also a square integrable martingale. By the martingale representation theorem, there exists a predictable $\mathcal{F}-$adapted stochastic process $(C_s)_{s\in[0,T]}$ such that
\[
\begin{array}{l}
   \esp{ \displaystyle\int_0^T (C_s)^2ds}<+\infty,\\
   \displaystyle\forall t\leq T\quad  M_t=M_0+\int_0^t C_sdW_s.
\end{array}
\]
As a consequence:
\[M_T-\int_t^TC_sdW_s-\int_0^tG(Y_s,q_s,U_s)ds=U_t.\]
Which ends to show that $(Y_t,q_t,U_t,C_t)_{t\in[0,T]}$ is a solution of the previous FBSDE as by definition of $M$:
\[M_T=U_0(Y_T,q_T)+\int_0^T G(Y_s,q_s,U_s)ds.\]
Of course this is perfectly natural if we remember what is the master equation to the associated mean field game (which in this case take the form of the FBSDE \eqref{mfg FBSDE}). 
\end{remarque}

\nocite{*}
\printbibliography

\appendix

\appendixtitleon
\appendixtitletocon
\begin{appendices}
\section{A proof of Lemma \ref{equivalence alpha monotony}}
\equivalence*
\begin{proof}
$(ii)\implies (i)$ is trivial, as we only need to divide by $t^2$ and take the limit as $t\to 0$ for $x=y+t\xi$.

For the the other way, let us first assume that $f$ is invertible.
Its inverse $g$ satisfies (by taking $\xi=Dg\cdot \nu$)
\[\forall \nu \quad \nu\cdot D g \cdot\nu \geq \alpha |\nu|^2.\]
Let \[w(t)=\langle g(th+(1-t)z)-g(z),h-z\rangle-\alpha t|h-z|^2,\]
\[ \implies w'(t)=\langle h-z,Dg(th+(1-t)z),h-z\rangle-\alpha |h-z|^2\geq 0,\] by assumption. Because $w(0)=0$ we can deduce that 
\[w(1)=\langle g(h)-g(z),h-z\rangle - \alpha |h-z|^2\geq 0.\]
By taking $h=f(x)$ and $z=f(y)$ we conclude that for invertible $f$ we have 
\[\langle f(x)-f(y),x-y\rangle\geq \alpha |f(x)-f(y)|^2.\]
For non invertible $f$, because $(i)$ implies that $f$ is monotonous, we know that for any $\varepsilon$ $f^\varepsilon: x\to f(x)+\varepsilon x$ is invertible. Moreover:
\[\xi\cdot Df^\varepsilon\cdot \xi=\xi \cdot Df\cdot \xi+\varepsilon|\xi|^2\]
and 
\[|Df^\varepsilon\cdot \xi|^2=|D_xf\cdot \xi|^2+\varepsilon^2 |\xi^2|+2\varepsilon\xi\cdot Df\cdot \xi.\]
As a consequence, for $\varepsilon< \min(\frac{1}{2},\frac{1}{2\alpha})$ $f^\varepsilon$ satisfies property $(i)$ for $\alpha^\varepsilon=\alpha-2\alpha^2\varepsilon$, which means by what we just showed for invertible $f$ that 
\[\langle f^\varepsilon(x)-f^\varepsilon(y),x-y\rangle\geq (\alpha-2\alpha^2 \varepsilon) |f^\varepsilon(x)-f^\varepsilon(y)|^2.\]
We can then conclude by taking the limit as $\varepsilon\to 0$.
\end{proof}
\end{appendices}
\end{document}